\documentclass[11pt]{amsart}

 \usepackage{amssymb,amsmath,amsfonts,amsthm,array,comment,mathrsfs,times}

\makeindex
\usepackage{geometry}
\usepackage{longtable,tabularx}
 
\usepackage[all,cmtip]{xy}
\usepackage{tikz-cd}
\usepackage{url}
\usepackage[normalem]{ulem}
\usepackage[hidelinks]{hyperref}
\usepackage{scalerel}

\hypersetup{bookmarksdepth=2}

\usepackage{enumitem}
\setlist[enumerate]{label=(\roman*), font=\upshape}

\usepackage{graphicx}
\setlist[itemize,1]{
    label=\raisebox{0.15ex}{\scalebox{0.75}{$\bullet$}}
}

\newcommand{\Z}{\mathbb{Z}}

\newcommand{\Q}{\mathbb{Q}}

\newcommand{\Gal}{\mathrm{Gal}}

\newcommand{\F}{\mathbb{F}}

\def\mf{\mathfrak}
\def\mb{\mathbb}
\def\mc{\mathcal}
\def\mr{\mathrm}

\makeatletter
\@namedef{subjclassname@2010}{%
  \textup{2010} Mathematics Subject Classification}
\makeatother

\usepackage[T1]{fontenc}

\newtheorem{theorem}{Theorem}[section]
\newtheorem{corollary}[theorem]{Corollary}
\newtheorem{lemma}[theorem]{Lemma}
\newtheorem{proposition}[theorem]{Proposition}

\newtheorem{question}[theorem]{Question}

\theoremstyle{definition}
\newtheorem{definition}[theorem]{Definition}
\newtheorem{remark}[theorem]{Remark}
\newtheorem{example}[theorem]{Example}
\newtheorem{fact}[theorem]{Fact}
\newtheorem{notation}[theorem]{Notation}

\newtheorem*{mainthm A}{Main Theorem A}
\newtheorem*{mainthm B}{Main Theorem B}
\newtheorem*{mainthm C}{Main Theorem C}
\newtheorem*{mainthm D}{Main Theorem D}

\numberwithin{equation}{section}

\def\res{\operatorname{res}}
\def\tp{\operatorname{tp}}
\def\Th{\operatorname{Th}}
\def\mh{\mbox{-}}

\begin{document}\label{sec: introduction}

\title{On the structure of certain valued fields II}

\author[J. Lee]{Junguk Lee}
\address{Department of Mathematics, Changwon National University, 20 Changwondaehakro, Uichang-gu, Changwon-si, Gyeongsangnam-do, 51140, South Korea}
\email{ljwhayo@changwon.ac.kr}

\author[W. Lee]{Wan Lee}
\address{Department of Mathematics, Changwon National University, 20 Changwondaehakro, Uichang-gu, Changwon-si, Gyeongsangnam-do, 51140, South Korea}
\email{wannim@gmail.com}

\date{}

\begin{abstract}
We study the structure of finitely ramified henselian valued fields of mixed characteristic with arbitrary residue fields via their residue rings of higher length, where a residue ring of length $n$ is the quotient ring of the valuation ring by the $n$-th power of its maximal ideal. We prove an approximate lifting theorem for homomorphisms between residue rings of higher length and explicitly determine the optimal bound of error of lifting.

As applications, we obtain several Ax-Kochen-Ershov principles for relative completeness, relative existential completeness, and existential closedness using the pure ring structures on the residue rings.
And we show that any formula on a residue ring of length $k$  is equivalent to a normal form of sentences in pure ordered group structure and special formulas at level $n$, uniformly for all finitely ramified henselian valued fields of mixed characteristic $(0,p)$ and the same initial ramification index $e$.
Here, for $n\ge k$, a special formula at level $n$ represents a definable set on a residue ring of length $k$ given by a projection image of a definable set in pure ring structure on a residue ring of length $n$. 
Also, such $n$ is optimally computed from the precise estimation of error of lifting and depends only on $k$, residue characteristic $p$, and initial ramification index $e$.
\end{abstract}

\subjclass[2020]{Primary 11U09; Secondary 11S15, 03C60}

\keywords{finitely ramified valued field, lifting number, Krasner's Lemma, Ax-Kochen-Ershov principle}

\maketitle
\tableofcontents

\section{Introduction}
We will study the structure of finitely ramified mixed characteristic valued fields via their residue rings. A residue ring of length $n$ is a quotient ring of the valuation ring by the $n$-th power of its maximal ideal. We will mainly consider the following question.

\begin{question}\label{question:motivation}
Fix a prime $p$ and a positive integer $e$. What is the lifting number $N_p(e)$ : the smallest $n$ such that given two finitely ramified complete discretely valued fields $K_1$ and $K_2$ of mixed characteristic $(0,p)$ and the initial ramification index $e$, $K_1$ and $K_2$ are isomorphic if two residue rings of length $n$ are isomorphic\textup{?}
\end{question}

\noindent In \cite{LL21}, we gave an upper bound of $N_p(e)$ for the perfect residue fields and in \cite{ADJ24}, Anscombe, Dittmann, and Jahnke gave an upper bound of $N_p(e)$ for arbitrary residue fields. Recently, Velavan in \cite{Vel26} gave an optimal bound of $n$ by considering a quotient ring of the valuation ring by the ideal generated by $p^n$ in Question \ref{question:motivation}.\\

In this paper, we explicitly determine the lifting number $N_p(e)$. Hou and Keating~\cite[Proposition~2.1]{HK04} obtained an upper bound for $N_p(e)$ for finite residue fields. We allow arbitrary residue fields and prove that this bound is optimal when $p\mid e$.
\begin{mainthm A}[Theorem \ref{thm: lifting number}]
For every prime $p$ and positive integer $e$, lifting numbers are given as follows:
$$
N_{p} (e)=
\begin{cases}
1,&  \hbox{ if }\ e=1,\\
e+1,&  \hbox{ if } \ e>1\ \hbox{ and }\  p\nmid e,\\
e(1+v_p(e))+\lfloor \dfrac{e}{p-1}\rfloor+1,&  \hbox{ if } \  p\mid e,
\end{cases}
$$
where $\left\lfloor \cdot \right\rfloor$ denotes the greatest integer function.
\end{mainthm A}
\noindent In addition, we prove that for $n$ large enough, any homomorphism between residue rings of length $n$ can be approximately lifted into an embedding of valued fields with an explicit error bound.

\begin{mainthm B}[Theorem \ref{thm: approximate lifting} and the proof of Theorem~\ref{thm:exact-normal-form-depth}]
Fix a prime $p$, a positive integer $e$, and  a positive integer $n\ge N_p(e)$. For any   complete discretely valued fields $K$ and $L$ of mixed characteristic $(0,p)$ and the initial ramification index $e$, if there is a homomorphism $\varphi$ between residue rings of length $n$, then there is an embedding $\Phi:K\to L$ such that $\Phi$ and $\varphi$ induce the same homomorphism on the residue ring of length $k$ for $1\le k\le n-(e-1+e v_p(e))$. If $\varphi$ is an isomorphism, $\Phi$ is an isomorphism. Also, the upper bound $n-(e-1+e v_p(e))$ is optimal.
\end{mainthm B}

As applications, we prove full and existential Ax-Kochen-Ershov principles, together with relative embedding and existential closedness results. 
And we show that any formula on a residue ring of length $k$  is equivalent to a normal form of sentences in pure ordered group structure and special formulas at level $n$, uniformly for all finitely ramified henselian valued fields of mixed characteristic $(0,p)$ and the same initial ramification index $e$. 
Such $n$ is also optimally computed. 
As a corollary, we deduce that the multi-sorted structure of residue rings of all finite lengths, connected by natural projection maps, is stably embedded in its pure ring structure for each residue ring  and is orthogonal to the pure value group (see Remark \ref{rem:meaning of induced structure}).

\begin{mainthm C}[Corollary \ref{cor:relative complete}, Corollary \ref{cor: relative existential}, and Theorem \ref{theorem:relative existential closedness}]
For a prime $p$ and a positive integer $e$, let $T_{p,e}$ be the $\mathcal L_{val}$-theory of all henselian valued fields of mixed characteristic $(0,p)$ and the initial ramification index $e$. 
Then, $N_{p}(e)$ is the smallest integer $n$ such that for any valued fields $(K,v)$ and $(L,w)$ satisfying $T_{p,e}$,
$$\underbrace{(K,v)\equiv (L,w)}_{\mbox{ in }\mathcal L_{val}}\ \Longleftrightarrow \  \underbrace{\mc O_{v,n}\equiv \mc O_{w,n}}_{\mbox{ in }\mathcal L_{ring}}\mbox{ and }\underbrace{v K\equiv w L}_{\mbox{ in }\mathcal L_{oag}}$$ and
$$\underbrace{(K,v)\equiv_{\exists} (L,w)}_{\mbox{ in }\mathcal L_{val}} \ \Longleftrightarrow \  \underbrace{\mc O_{v,n}\equiv_{\exists^+} \mc O_{w,n}}_{\mbox{ in }\mathcal L_{ring}}$$ and,
in case  $(K,v)\subseteq (L,w)$,
$$\underbrace{(K,v)\preceq_{\exists} (L,w)}_{\mbox{ in }\mathcal L_{val}} \ \Longleftrightarrow  \ \underbrace{\mc O_{v,n}\preceq_{\exists} \mc O_{w,n}}_{\mbox{ in }\mathcal L_{ring}}\mbox{ and }\underbrace{v K\preceq_{\exists} w L}_{\mbox{ in }\mathcal L_{oag}},$$
where $\mathcal L_{val}$ is the valued field language, $\mathcal L_{ring}$ is the ring language, and $\mathcal L_{oag}$ is the ordered group language. Here, $\equiv_{\exists^+}$ means satisfying the same positive existential sentences (cf. \cite[Definition~2.1.1 and Remark~2.1.3]{Kam25}). 
\end{mainthm C}

\begin{mainthm D}[Proposition \ref{prop:equivlance between formula and special formula} and Theorem \ref{thm:exact-normal-form-depth}]
For a prime $p$ and positive integers $e$ and $k$, let $T_{p,e}$ be the $\mathcal L_{val}$-theory of all henselian valued fields of mixed characteristic $(0,p)$ and the initial ramification index $e$, and let $\lambda_{p}(e,k):=\mr {max}\{k+e-1+ev_p(e), N_p(e) \}$. 
Then, $\lambda_p(e,k)$ is the smallest integer $n$ with $n\ge k$ such that for a tuple $x$  of variables on the residue ring of length $k$, any $\mathcal L_{val}$-formula $\theta(x)$ is equivalent modulo $T_{p,e}$ to a formula of the form 
$$\bigvee\left( \theta_{0}(x)\wedge \theta_{1}\right),$$
where  $\theta_0(x)$ is a special formula at level $n$, that is,
$$\theta_0(x)\equiv \exists x'\left(\theta'(x')\wedge \res^n_k(x')=x\right),$$ where $x'$ is a tuple of variables on the residue ring of length $n$ and $\theta'(x')$ is a $\mathcal L_{ring}$-formula, and $\theta_1$ is a $\mathcal L_{oag}$-sentence. Moreover, if $\theta(x)$ is existential, then it is equivalent modulo $T_{p,e}$ to a single special formula $\theta_0(x)$ at level $n$ where $\theta'(x')$ is a positive existential $\mathcal L_{ring}$-formula.
\end{mainthm D}

\medskip

The paper is organized as follows.
Section~\ref{sec: preliminaries} develops lifting results using formal smoothness, constructs Cohen subrings, and proves root estimates.
Section~\ref{sec:embedding lemma} proves an approximate lifting theorem and computes the lifting number.
Section~\ref{sect:AKE via higher residue rings} proves the full and existential AKE principles and their decidability consequences, together with relative embedding and existential closedness results.
Section~\ref{sec:induced} proves stable embeddedness and orthogonality, obtains special normal forms, and computes their least uniform level of the normal forms. 
It also shows that these normal forms are effectively computable.

\section{Preliminaries}\label{sec: preliminaries}
\subsection{Notation}
Given a valued field $(K,v)$, we denote the residue field by $Kv$, the value group by $vK$, the valuation ring by $\mathcal O_v$ and the maximal ideal by $\mathfrak{m}_v$.
When there is no confusion on $v$, we write $\mc O_K$ for $\mc O_v$.
For $n\ge 1$, define
$$
\mathcal O_{v,n}=\mathcal O_v/\mathfrak{m}_v^n \ \hbox{ and }\  \res_n:\mathcal O_v\longrightarrow \mathcal O_{v,n},\quad x\longmapsto x+\mathfrak{m}_v^n
$$
as the residue ring of length $n$  and the residue map at level $n$, respectively.
 For $m \ge n\ge 1$, let 
 $$
 \res^m_n:\mathcal O_{v,m}\longrightarrow \mathcal O_{v,n},  \quad x+\mathfrak{m}_v^m\longmapsto x+\mathfrak{m}_v^n
 $$ 
 be the natural projection. 
 For $x\in\mathcal O_v$, we simply write $\bar{x}:=\operatorname{res}_1(x)\in Kv$. 
 Similarly, for $h(X)\in\mathcal O_v[X]$, we denote its coefficientwise reduction by $\bar{h}\in Kv[X]$.
 All ring homomorphisms are assumed to be unital.

 Let $(K,v)$ be a valued field of mixed characteristic $(0,p)$. 
 We call $(K,v)$ \emph{finitely ramified} if the set
$$
\{\gamma\in vK:0_{vK}<\gamma\le v(p)\}
$$
is finite, and call its cardinality $e_K$ the \emph{initial ramification index} of $(K,v)$. 
In this case, $vK$ has a least positive element $\epsilon$, and $v(p)=e_K\epsilon$. 
For a discretely valued field, $e_K$ agrees with the usual absolute ramification index; in particular, $e_K=v(p)$ when the valuation is normalized at a uniformizer.

 For a $\mb Z$-valued field we usually  normalize $\epsilon=1$  unless stated otherwise.
 When we restrict a valuation to a subfield, we keep the same normalization unless stated otherwise.
We use the usual conventions
$
v(0)=\infty \ \ \text{and} \ \ \infty+\gamma=\infty.
$

 We say a finite extension $(F,v_0)\subseteq (K,v)$ of valued fields is {\em totally ramified} if $$[K:F]=[v K : v_0 F],$$ and so $Kv=Fv_0$.

In this paper, we consider finitely ramified valued fields as first order structures in the following multi-sorted language $\mathcal L_{val}$: there are a sort $K$ for the field, a sort $\Gamma$ for the value group with infinity, and a sort $\mathcal O_n$ for the residue ring of length $n$ for each $n\ge 1$. 
The ring sorts $K$ and $\mathcal O_n$ are each endowed with the ring language $\mathcal L_{ring}:=\{+,\cdot,0,1\}$ and the value group sort with $\mathcal L_{oag}:=\{+,<,0,\infty\}$. 
Also, there are symbols for the valuation map $v:K\twoheadrightarrow \Gamma\cup\{\infty\}$, the residue map at level $n$ $\res_n:K\twoheadrightarrow \mathcal O_n$, which we interpret as the constant zero map outside the valuation ring, and the natural projection $\res^m_n:\mathcal O_m\twoheadrightarrow \mathcal O_n$ for $m > n\ge 1$, with $\mr{res}_n^n=\mr{id}_{\mathcal O_n}$.
We use $\operatorname{res}_k^k(x)$ as an abbreviation for $x$. We write $T_{p, e}$ for the theory of all finitely ramified henselian fields of mixed characteristic $(0,p)$ with $e_K=e$, in this language.

  We call an embedding $\Phi:(K,v)\hookrightarrow (L,w)$ isometric if for $x,y\in K $
$$
v(x)>v(y)\quad\Longleftrightarrow \quad w(\Phi(x))>w(\Phi(y)) .
$$
For   discretely valued fields, this is equivalent to continuity. For finitely ramified valued fields of the same mixed characteristic and initial ramification index, every isometric embedding induces a unique \(\mathcal L_{\mathrm{val}}\)-embedding.

\subsection{Formal smoothness}

We recall some lifting properties from algebraic geometry and commutative ring theory.
\begin{definition}
    Let $A\rightarrow B$ be a ring homomorphism and $I\subset B$ an ideal.
 The map  $A\rightarrow B$ is   \emph{formally smooth  for the $I$-adic topology} if,  for any $A$-algebra $C$,   ideal $N \subset C$ with $N^2=0$,   and   $A$-algebra homomorphism $u:B\rightarrow C/N$ with $u(I^n)=0$ for some   $n$,  there is an $A$-algebra lifting  $t:B\rightarrow C$ of $u$, i.e., $t$ makes the following diagram
     \begin{equation*}
    \begin{tikzcd}
        B \arrow[r, "u"]  \arrow[rd, dotted, "t" ] & C/N    \\
        A \arrow[r] \arrow[u]  & C   \arrow[u] 
    \end{tikzcd}
\end{equation*} 
 commutative.
  It is  \emph{formally unramified}   if there is at most one such lifting, and \emph{formally \'{e}tale}  if there is exactly one such lifting.
 When $I=0$, we omit the reference to the  topology.
\end{definition}
\noindent
Note that    $u(I^n)=0$ is equivalent to saying that $u$ is continuous for the discrete topology on $C/N$.
 Since $N^2=0$, we   have   $t(I^{2n})=0$, i.e., $t$ is continuous for the discrete topology on $C$.

\begin{fact}[{\cite[Section 28]{Mat86}}]\label{lemma base change smooth}
  \begin{enumerate}[ font=\normalfont]
      \item 
    Suppose $A\rightarrow B$ is formally smooth (resp., formally unramified)   for the $I$-adic topology.
    Then for any $A$-algebra $A'$, the induced map $A'\rightarrow B'$ is formally smooth (resp., formally unramified)   for the $I'$-adic topology, where $B':=B\otimes_A A'$ and $I':=IB'$.
   \item 
    Let $A\stackrel{g}\rightarrow B \stackrel{g'}\rightarrow B'$ be  ring homomorphisms.
    Let $I$ and $I'$ be ideals of $B$ and $B'$ respectively.
    Suppose $g'(I^m)\subset (I')^n$ for some $m$ and $ n$.
    If $g$ is formally smooth (resp., formally unramified) for the $I$-adic topology and    $g'$ is formally smooth (resp., formally unramified)  for the $I'$-adic topology respectively, then $g'g$ is formally smooth (resp., formally unramified) for the $I'$-adic topology.
    \end{enumerate}
\end{fact}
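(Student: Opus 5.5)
The plan is to verify both parts directly from the definition of formal smoothness (resp.\ unramifiedness) for the adic topology, by chasing lifting diagrams. Throughout we use the observation after the definition: a map $u$ killing some power $I^n$ is the same as a map continuous for the discrete topology on the target.

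\emph{Part (i).} Fix an $A'$-algebra $C$, an ideal $N\subset C$ with $N^2=0$, and an $A'$-algebra map $u':B'\to C/N$ with $u'((I')^n)=0$.
\begin{itemize}
\item Regard $C$ as an $A$-algebra through $A\to A'$, and compose $u'$ with $B\to B\otimes_A A'=B'$ to obtain an $A$-algebra map $u:B\to C/N$.
\item Since $I^n B'=(I')^n$, we get $u(I^n)=0$, so formal smoothness of $A\to B$ gives an $A$-algebra lift $t:B\to C$.
\item Combine $t$ with the structure map $A'\to C$. By the universal property of $B\otimes_A A'$ as a pushout of commutative rings, this yields an $A'$-algebra map $t':B'\to C$.
\item Check that $t'$ lifts $u'$. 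Both $t'$ composed with $C\to C/N$ and $u'$ agree on the images of $B$ and of $A'$, so uniqueness in the pushout forces them to be equal.
\end{itemize}
For unramifiedness, suppose two $A'$-lifts $t'_1,t'_2$ of $u'$ are given. Their restrictions to $B$ are $A$-lifts of $u$, so they coincide. They also agree on $A'$, hence $t'_1=t'_2$ by the pushout property.

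\emph{Part (ii).} Take an $A$-algebra $C$ with $N^2=0$ and an $A$-algebra map $u:B'\to C/N$ with $u((I')^n)=0$.
\begin{itemize}
\item Consider $u\circ g':B\to C/N$. It kills $I^{mk}$ for some $k$: choose $k$ with $(I')^{nk}\subseteq (I')^{n'}$, where $n'$ is large enough for $u$, and note $g'(I^{mk})\subseteq (I')^{nk}$.
\item Apply formal smoothness of $g$ to $u\circ g'$, obtaining an $A$-algebra lift $s:B\to C$. This makes $C$ a $B$-algebra via $s$, and $u$ becomes a $B$-algebra map, because $C\to C/N$ composed with $s$ equals $u\circ g'$.
\item Apply formal smoothness of $g'$ for the $I'$-adic topology to get a $B$-algebra lift $t:B'\to C$ of $u$. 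Since $t$ is in particular an $A$-algebra map, it is the required lift.
\end{itemize}
For unramifiedness, let $t_1,t_2$ be two $A$-lifts of $u$. Then $t_1 g'$ and $t_2 g'$ are both $A$-lifts of $u g'$, so by unramifiedness of $g$ they coincide; call this common map $s$. Now $t_1$ and $t_2$ are both $B$-algebra lifts of $u$ with respect to the $B$-structure $s$, so unramifiedness of $g'$ gives $t_1=t_2$.

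The only real subtlety is the continuity bookkeeping, i.e.\ showing $u g'$ kills a power of $I$. This is exactly what the hypothesis $g'(I^m)\subseteq (I')^n$ supplies, using that powers of $I'$ are cofinal. Everything else is formal diagram chasing, consistent with the reference to \cite[Section 28]{Mat86}.
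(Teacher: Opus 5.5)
Your proof is correct. It is the standard direct diagram chase from the definitions that underlies the paper's citation to \cite[Section 28]{Mat86}: the pushout property of $B\otimes_A A'$ handles base change, and for the composite you lift first along $g$ and then along $g'$ with $C$ viewed as a $B$-algebra via $s$. The paper gives no proof of its own. The only implicit point is that $n\ge 1$ in the hypothesis $g'(I^m)\subset (I')^n$; you need this when you choose $k$ with $nk\ge n'$, and without it the composition statement would fail.
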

 
\begin{definition}
    Let $l/k$ be a field extension.
   We say that  a subset $\underline{b}\subset l$  is  a \emph{differential basis} if it is a $p$-basis or a transcendence basis of $l/k$, depending on whether  $\hbox{char } l=p> 0$ or  $\hbox{char } l=0$, respectively.
\end{definition}

\begin{remark}[{\cite[Section~26]{Mat86}}]
  $\underline{b}$  is  a differential basis of $l/k$ if and only if the   image of  $\underline{b}$ under the natural map $l\rightarrow \Omega_{l/k}$ is   a basis of $\Omega_{l/k}$ over $l$, where $\Omega_{l/k}$ denotes the module of differentials of $l/k$.
\end{remark}

\begin{fact}[{\cite[Section 26]{Mat86}}] \label{lemma separable smooth criterion}
Let $l/k$ be a field extension.
\begin{enumerate}[ font=\normalfont]
\item 
     $l/k $ is formally smooth  
   if and only if
    $l/k$ is separable. 
    \item 
    Suppose that  $l/k$ is separable.
    Let $\underline{\beta} $ be a differential basis of $l/k$.
    Then $\underline{\beta} $ is algebraically independent over $k$ and $l/k(\underline{\beta})$ is formally \'{e}tale.    
    \end{enumerate}
\end{fact}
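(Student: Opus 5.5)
The statement in question is Fact~\ref{lemma separable smooth criterion}, the standard criterion from \cite[Section~26]{Mat86}. I would prove it through modules of differentials, with part (ii) established first and part (i) obtained from it together with a separate argument in each characteristic.

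\textbf{Part (ii).} Let $\underline{\beta}$ be a differential basis of the separable extension $l/k$.

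First I show that $\underline{\beta}$ is algebraically independent over $k$.
\begin{itemize}
\item In characteristic $0$ this is immediate, since $\underline{\beta}$ is by definition a transcendence basis.
\item In characteristic $p$, the set $\underline{\beta}$ is a $p$-basis. Suppose $\underline{\beta}$ satisfied a nontrivial algebraic relation over $k$. Separability of $l/k$ means $l$ is linearly disjoint from $k^{1/p}$ over $k$. Using this, one can move a relation of minimal degree to one that is not a polynomial in $p$-th powers. Differentiating it then gives a nontrivial $l$-linear relation among the $d\beta$ in $\Omega_{l/k}$. This contradicts the fact that the $d\beta$ form a basis of $\Omega_{l/k}$.
\end{itemize}

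Next, set $k'=k(\underline{\beta})$. Since $\underline{\beta}$ is algebraically independent, $\Omega_{k'/k}$ is free on the $d\beta$. Consider the exact sequence
$$\Omega_{k'/k}\otimes l\to\Omega_{l/k}\to\Omega_{l/k'}\to 0.$$
The first map is surjective because the $d\beta$ span $\Omega_{l/k}$. Hence $\Omega_{l/k'}=0$, so $l/k'$ is formally unramified.

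It remains to show that $l/k'$ is formally smooth.
\begin{itemize}
\item In characteristic $0$, $l/k'$ is algebraic and separable. It is therefore a filtered union of finite étale extensions, which are formally étale. Liftings are unique, so they glue along the union.
\item In characteristic $p$, $l=k'l^p$ by the $p$-basis property. Then $l/k'$ is formally étale by the standard argument: liftings are forced on $p$-th powers, because $N^2=0$ makes $x\mapsto x^p$ well defined on lifts.
\end{itemize}
This second characteristic-$p$ point is the main obstacle in part (ii), and I would follow Matsumura's proof of it.

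\textbf{Part (i), direction ``separable $\Rightarrow$ formally smooth''.} Choose a differential basis $\underline{\beta}$. A purely transcendental extension $k\to k(\underline{\beta})$ is formally smooth: lift the images of the $\beta$ arbitrarily, and note that localization preserves formal smoothness. Composing with the formally étale extension $k(\underline{\beta})\to l$ from part (ii) gives formal smoothness of $l/k$, by Fact~\ref{lemma base change smooth}(ii).

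\textbf{Part (i), direction ``formally smooth $\Rightarrow$ separable''.} In characteristic $0$ there is nothing to prove, so assume characteristic $p$. By base change (Fact~\ref{lemma base change smooth}(i)), $l\otimes_k k^{1/p}$ is formally smooth over $k^{1/p}$. A formally smooth algebra over a field is reduced, and reducedness of $l\otimes_k k^{1/p}$ is exactly linear disjointness of $l$ and $k^{1/p}$ over $k$, i.e.\ separability.

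The reducedness step is the delicate one, so I would argue it directly:
\begin{itemize}
\item Suppose $x\ne0$ with $x^p=0$.
\item Use the square-zero extension $C=l[\epsilon]$-type construction to produce a lifting.
\item Show that this lifting forces $dx\ne0$ while $d(x^p)=0$, giving a contradiction.
\end{itemize}
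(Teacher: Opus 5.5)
The paper gives no proof of this Fact; it only cites Matsumura, \S 26. Your outline follows the same general plan (differential bases, algebraic independence, formal \'etaleness over $k(\underline{\beta})$, composition), but two steps fail as written.

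\textbf{Formal \'etaleness of $l/k'$ in characteristic $p$, where $k'=k(\underline{\beta})$.} Your justification uses only $l=k'l^p$, and that is not enough.

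\begin{itemize}
\item Your observation does something. A lift must send $y\in l^p$ to $c^p$, with $c$ any lift of $u(y^{1/p})$. This gives uniqueness and a well-defined homomorphism on $l^p$.
\item It does not give existence. For that you must glue this homomorphism with the structure map on $k'$. This needs $l\cong k'\otimes_{k'^p}l^p$, i.e.\ linear disjointness of $k'$ and $l^p$ over $k'^p$, i.e.\ separability of $l/k'$.
\item Without this the claim is false. Let $k'=\mathbb{F}_p(t)$ and let $l$ be its perfect closure. Then $l=k'l^p$ and $\Omega_{l/k'}=0$. Take $C=l[\epsilon]/(\epsilon^2)$ with $k'$-structure $t\mapsto t+\epsilon$, $N=(\epsilon)$ and $u=\mathrm{id}_l$. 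There is no lift, because every $p$-th power in $C$ lies in $l$ while $t+\epsilon\notin l$.
\end{itemize}

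Your argument for this step never uses that $l/k$ is separable, and that is exactly what is missing. To close the gap, take a $p$-basis $\Lambda$ of $k$.
\begin{itemize}
\item Separability of $l/k$ means $\Lambda$ stays $p$-independent in $l$.
\item $\underline{\beta}$ is $p$-independent over $kl^p=l^p(\Lambda)$.
\item Hence $\Lambda\cup\underline{\beta}$, which is a $p$-basis of $k'$, is $p$-independent in $l$. This is the required linear disjointness.
\end{itemize}
The Frobenius-defined map on $l^p$ and the structure map on $k'$ agree on $k'^p$, so they glue.

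\textbf{Reducedness in ``formally smooth $\Rightarrow$ separable''.} The claim that a formally smooth algebra over a field is reduced is not justified. The direct argument you sketch cannot yield a contradiction: in characteristic $p$ one has $d(x^p)=px^{p-1}dx=0$ for \emph{every} $x$, so ``$dx\neq0$ while $d(x^p)=0$'' is consistent.

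Matsumura's route avoids reducedness.
\begin{itemize}
\item Let $M$ be an $l$-vector space and $D\colon k\to M$ a derivation. Apply formal smoothness to $C=l\oplus M$ (with $M^2=0$) and $k$-structure $a\mapsto a+D(a)$. This shows $D$ extends to $l$, so $\Omega_k\otimes_k l\to\Omega_l$ is injective.
\item Hence a $p$-basis of $k$ stays $p$-independent in $l$.
\item So its monomials, which form a $k^p$-basis of $k$, remain linearly independent over $l^p$. This is linear disjointness of $k$ and $l^p$ over $k^p$, equivalently of $k^{1/p}$ and $l$ over $k$.
\end{itemize}

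If you prefer to keep your base-change idea, pass to a finite subextension $K_0\subseteq k^{1/p}$ with $l\otimes_k K_0$ nonreduced.
\begin{itemize}
\item This ring is Artinian local and formally smooth over $K_0$.
\item Its adic topology is discrete.
\item A Noetherian local ring that is formally smooth over a field for its adic topology is regular, so here it is a field, contradicting non-reducedness.
\end{itemize}
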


\begin{lemma}\label{lemma smooth lifting}
    Given     $A\rightarrow B$, let  $I\subset A$ be  an ideal with $I^2=0$.
    \begin{enumerate}[  font=\normalfont]
        \item 
       If $ A\rightarrow B$ is  flat and  $A/I\rightarrow B/IB$  is formally smooth, then $A\rightarrow B$ is formally smooth.
        \item 
       If $A/I\rightarrow B/IB$  is formally unramified, then  $ A\rightarrow  B$ is formally unramified.
    \end{enumerate}
    \end{lemma}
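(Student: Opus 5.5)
We argue directly from the definition of formal smoothness (resp. unramifiedness) with the discrete topology. In both parts, fix an $A$-algebra $C$, an ideal $N\subset C$ with $N^2=0$, and an $A$-algebra homomorphism $u:B\to C/N$. The idea is to split the lifting problem along the filtration $C\to C/IC\to C/(N+IC)$, using the hypothesis over $A/I$ for one step and the square-zero ideal $IC$ for the other. Since $I^2=0$, we have $(IC)^2=0$, and also $(N+IC)/IC$ is square-zero in $C/IC$.

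For (ii), suppose $t_1,t_2:B\to C$ are two $A$-algebra lifts of $u$. Reduce them modulo $IC$. Both $C/IC$ and $C/(N+IC)$ are $A/I$-algebras, and $u$ induces an $A/I$-algebra map $\bar u: B/IB\to C/(N+IC)$. The reductions $\bar t_1,\bar t_2$ factor through $B/IB$ and lift $\bar u$ along the square-zero ideal $(N+IC)/IC$. Formal unramifiedness of $A/I\to B/IB$ then gives $\bar t_1=\bar t_2$.

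So $D:=t_1-t_2$ takes values in $IC\cap N$, and the remaining task is to show $D=0$. I would handle this by first reducing to the universal case. Formal unramifiedness is equivalent to $\Omega_{B/A}=0$, and the hypothesis gives $\Omega_{(B/IB)/(A/I)}=\Omega_{B/A}\otimes_A A/I=\Omega_{B/A}/I\Omega_{B/A}=0$. Hence $\Omega_{B/A}=I\Omega_{B/A}=I^2\Omega_{B/A}=0$, since $I^2=0$. This makes (ii) immediate, and I would present this Nakayama-style nilpotence argument rather than the diagram chase.

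For (i), I would follow the standard strategy of EGA $0_{IV}$ 19.4 / Matsumura Theorem 28.? in the nilpotent case. Given $u$, first lift the reduction $\bar u:B/IB\to C/(N+IC)$ to $v:B/IB\to C/IC$ using formal smoothness of $A/I\to B/IB$. We can arrange that $v$ is compatible with $u$, since both give maps to $C/(N+IC)$. The fibre product $C/N\times_{C/(N+IC)}C/IC=C/(N\cap IC)$ then receives the pair $(u,v)$, giving a map $B\to C/(N\cap IC)$. It remains to lift this map along the square-zero ideal $N\cap IC$, which is killed by $I$, so it is an $A/I$-module.

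The main obstacle is this last step: lifting $B\to C/J$ when $IJ=0$. Here I would use flatness. The obstruction to lifting lies in $\mathrm{Ext}^1$ of the cotangent complex, $\mathrm{Ext}^1_B(L_{B/A},J)$. Because $J$ is a $B/IB$-module and $B$ is flat over $A$, $L_{B/A}\otimes^L_B B/IB\simeq L_{(B/IB)/(A/I)}$ by flat base change. The obstruction therefore equals $\mathrm{Ext}^1_{B/IB}(L_{(B/IB)/(A/I)},J)$, which vanishes by formal smoothness of $A/I\to B/IB$. Alternatively, to stay elementary, I would cite Matsumura Theorem 28.? (the local flatness criterion version of smoothness lifting) directly, since this statement appears there essentially verbatim.
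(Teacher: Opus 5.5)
Your part (ii) is exactly the paper's proof once you drop the diagram chase, as you propose: $\Omega_{B/A}/I\Omega_{B/A}=\Omega_{(B/IB)/(A/I)}=0$, hence $\Omega_{B/A}=I\Omega_{B/A}=I^2\Omega_{B/A}=0$.

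For (i) your route is genuinely different and correct. The paper gives no argument of its own; it simply cites \cite[Tag 031L]{Stacks}, whose proof is an elementary computation with a polynomial presentation of $B$ and the naive cotangent complex. Your steps check out:
\begin{itemize}
\item $C/(N\cap IC)\cong C/N\times_{C/(N+IC)}C/IC$.
\item The compatibility of $u$ with the lift $v$ of $\bar u$ is automatic; nothing needs to be arranged.
\item $J=N\cap IC$ satisfies $J^2=0$ and $IJ\subseteq I^2C=0$, so $J$ is a $B/IB$-module.
\item Tor-independent base change, which is where flatness enters, identifies the obstruction group $\mathrm{Ext}^1_B(L_{B/A},J)$ with $\mathrm{Ext}^1_{B/IB}(L_{(B/IB)/(A/I)},J)$.
\end{itemize}
The last group classifies square-zero $A/I$-algebra extensions of $B/IB$ by $J$. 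It vanishes because formal smoothness splits every such extension. You should state this explicitly rather than just writing ``by formal smoothness''.

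Your argument gives a conceptual explanation of exactly why flatness is needed: it is what lets the obstruction group base change to $A/I$. The cost is Illusie's cotangent complex, with its obstruction theory and base-change theorem. That is much heavier than the elementary proof behind the paper's citation, and than anything the paper needs: the lemma is only applied to the Artinian rings $A/\mathfrak m_A^{n}\to B/\mathfrak m_B^{n}$ in Lemma~\ref{lemma DVR smooth etale lift}.

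Your fallback reference is imprecise. As far as I recall, the smoothness criterion in Matsumura's \S 28 concerns local homomorphisms of Noetherian local rings with adic topologies, so it is not this lemma verbatim, although it would cover the paper's application. The exact general statement is Stacks Tag 031L.
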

    
\begin{proof}
   For (i),  see \cite[Section 10.138, Tag 031L]{Stacks}.
   For  (ii),  note that $ A\rightarrow B$  is  formally unramified  if and only if  $\Omega_{B/A}=0$
   (see \cite[Section 25]{Mat86}).
   By  the base change, we have 
    $$
    \Omega_{B/A}/I\Omega_{B/A}= \Omega_{B/A} \otimes_B B/IB= \Omega_{B/A} \otimes_A A/I =   \Omega_{\left(B/IB\right)/\left(A/I\right)}=0.
    $$
         Thus $ \Omega_{B/A}  =  I\Omega_{B/A}=  I^2\Omega_{B/A} =0$.
  \end{proof}


\begin{lemma}\label{lemma DVR smooth etale lift}
    Let $ (A, \mathfrak{m}_A, k) \subset (B, \mathfrak{m}_B, l)$ be  DVRs with $\mathfrak{m}_AB=\mathfrak{m}_B^e$ for some $e\ge 1 $. 
        The following are equivalent:
    \begin{enumerate}[label=(\roman*), font=\normalfont]
        \item 
      $A\hookrightarrow B$ is formally  smooth (resp., formally \'{e}tale) for the $\mathfrak m_B$-adic topology.
        \item 
      $A/\mathfrak{m}_A^n\rightarrow B/\mathfrak{m}_A^nB$ is formally  smooth (resp., formally \'{e}tale) for all $n
      \ge 1$.
        \item
         $A/\mathfrak{m}_A^{n_0}\rightarrow B/\mathfrak{m}_A^{n_0}B$ is formally  smooth (resp., formally \'{e}tale) for some $n_0 \ge 1$.
        \item 
       The field extension $l/k$ is formally smooth (resp., formally \'{e}tale) and $e=1$.
    \end{enumerate}

\end{lemma}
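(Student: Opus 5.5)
I plan to prove the cycle (i)$\Rightarrow$(ii)$\Rightarrow$(iii)$\Rightarrow$(iv)$\Rightarrow$(i), handling the ``formally smooth'' and ``formally étale'' versions side by side. For the étale version the extra ingredient is formal unramifiedness, which I will control through $\Omega$ as in the proof of Lemma~\ref{lemma smooth lifting}(ii).

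For (i)$\Rightarrow$(ii), apply the base change of Fact~\ref{lemma base change smooth}(i) with $A'=A/\mathfrak m_A^n$. Then $B'=B/\mathfrak m_A^nB=B/\mathfrak m_B^{en}$, and $I'=\mathfrak m_BB'$ is nilpotent. The $I'$-adic topology on $B'$ is therefore discrete, so $A'\to B'$ is formally smooth (resp.\ étale) outright. The implication (ii)$\Rightarrow$(iii) is trivial.

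For (iii)$\Rightarrow$(iv), I first reduce to $n_0=1$. Base change along $A/\mathfrak m_A^{n_0}\to k$ turns $A/\mathfrak m_A^{n_0}\to B/\mathfrak m_A^{n_0}B$ into $k\to B/\mathfrak m_AB=B/\mathfrak m_B^e$, which is again formally smooth (resp.\ étale) by Fact~\ref{lemma base change smooth}(i). Next I show $e=1$. Suppose $e\ge2$ and write $R=B/\mathfrak m_B^e$, a local Artinian $k$-algebra. Formal smoothness of $k\to R$ forces $R$ to be regular; this is standard, since a formally smooth algebra over a field is geometrically regular (\cite[Section 28]{Mat86}). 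A regular Artinian local ring is a field, so $\mathfrak m_BR=0$, contradicting $e\ge2$.

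I expect this regularity step to be the main obstacle. If I want to avoid citing it, I will argue directly. Let $\pi$ be a uniformizer of $B$. Take $C=R[T]/(T^2)$ with $N=(T)$ and choose a map $u$ that is not liftable, using that $\pi$ is nilpotent of exact order $e$ while $\pi+T$ behaves differently. Alternatively, use $\Omega_{R/k}$: formal smoothness makes $\mathfrak m/\mathfrak m^2\to \Omega_{R/k}\otimes l$ injective (the second fundamental sequence splits), but $d(\pi^e)=e\pi^{e-1}d\pi$ forces a relation. I would try the Matsumura citation first.

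Once $e=1$, we have $R=l$, so $l/k$ is formally smooth (resp.\ étale), which is (iv).

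For (iv)$\Rightarrow$(i), assume $e=1$ and $l/k$ formally smooth (resp.\ étale). Since $B$ is a DVR containing $A$, it is torsion-free over $A$, hence flat. For each $n$, $B/\mathfrak m_A^{n}B$ is flat over $A/\mathfrak m_A^n$, and its reduction modulo $\mathfrak m_A$ is $k\to l$, which is formally smooth. Inducting along $\mathfrak m_A^{j}/\mathfrak m_A^{j+1}$, whose square is zero, Lemma~\ref{lemma smooth lifting}(i) gives that $A/\mathfrak m_A^n\to B/\mathfrak m_B^n$ is formally smooth for every $n$, using $e=1$ so that $\mathfrak m_A^nB=\mathfrak m_B^n$. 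In the étale case, Lemma~\ref{lemma smooth lifting}(ii) gives formal unramifiedness in the same way.

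Now take $u:B\to C/N$ with $u(\mathfrak m_B^n)=0$. It factors through $B/\mathfrak m_A^nB$, and $C$ becomes an $A/\mathfrak m_A^{2n}$-algebra because $A\to C$ kills $\mathfrak m_A^{2n}$ (as $N^2=0$). Applying formal smoothness of $A/\mathfrak m_A^{2n}\to B/\mathfrak m_A^{2n}B$ to the composite $B/\mathfrak m_A^{2n}B\to C/N$ yields a lift, and hence a lift of $u$. Uniqueness in the étale case follows in the same way. This gives (i).
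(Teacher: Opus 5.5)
Your proof is correct. For (i)$\Rightarrow$(ii), (ii)$\Rightarrow$(iii) and (iv)$\Rightarrow$(i) it is the paper's argument:
\begin{itemize}
\item you base change to $A/\mathfrak m_A^n$, where $\mathfrak m_B$ becomes nilpotent;
\item you use flatness and repeated application of Lemma~\ref{lemma smooth lifting} along the square-zero ideals $\mathfrak m_A^j/\mathfrak m_A^{j+1}$;
\item you factor a continuous $u$ through $B/\mathfrak m_B^{2n}$, because $A\to C$ kills $\mathfrak m_A^{2n}$. Every lift also kills $\mathfrak m_B^{2n}$, which gives uniqueness in the \'etale case.
\end{itemize}

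The genuine difference is how you obtain $e=1$ in (iii)$\Rightarrow$(iv). You invoke the theorem that a Noetherian local algebra which is formally smooth over a field, for its maximal-ideal-adic topology, is regular. This applies to $R=B/\mathfrak m_B^e$: since $\mathfrak m_R$ is nilpotent, the discrete and $\mathfrak m_R$-adic notions coincide, and a regular Artinian local ring is a field.

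The paper argues by hand instead. It passes to the prime field $f$; since $f$ is perfect, $f\to k\to R$ remains formally smooth, and every ring map is an $f$-algebra map. It then uses formal smoothness of $l$ over $f$ to lift a coefficient field and identify $R\cong l[X]/(X^e)$. Finally, the identity of $R$ cannot lift along the square-zero extension $l[X]/(X^{e+1})\to l[X]/(X^e)$: the image of $\bar X$ would generate $(X)/(X^{e+1})$ yet have $e$-th power $0$. Your route is shorter but leans on a substantial theorem whose proof itself runs through Cohen-structure-type lifting. The paper's route is self-contained and is essentially the special case of that theorem needed here.

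Neither of your fallback ideas would have worked, had you needed them.
\begin{itemize}
\item With the $k$-structure coming from $R$, the extension $R[T]/(T^2)\to R$ is split. Every $k$-algebra map $u:R\to R$ therefore lifts by composing with the inclusion $R\hookrightarrow R[T]/(T^2)$, so no obstruction can appear there.
\item The map $\mathfrak m/\mathfrak m^2\to\Omega_{R/k}\otimes_R l$ only sees $R/\mathfrak m^2$, so it cannot detect $e\ge 2$. For $R=l[X]/(X^e)$ over $l$ it is an isomorphism, even though $R$ is not formally smooth. The relation $e\pi^{e-1}d\pi=0$ becomes vacuous after tensoring with $l$, since $\pi^{e-1}\mapsto 0$.
\end{itemize}
The obstruction lives in a nonsplit extension with kernel $(X^e)/(X^{e+1})$, which is precisely the test object the paper uses.
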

\begin{proof}
 (i) $\Rightarrow$ (ii): this follows from Fact~\ref{lemma base change smooth} (i) since $\mathfrak{m}_AB$-adic and  $\mathfrak{m}_B$-adic topologies are the same. 
 
(ii) $\Rightarrow$ (iii):  obvious.

 (iii) $\Rightarrow$ (iv):
    by Fact~\ref{lemma base change smooth} (i),  the map  $k\rightarrow B/\mathfrak{m}_AB  =B/ \mf m _B^{e}$ is formally smooth (resp., formally \'{e}tale).
      Let $f$ be the prime field in $k$.
   Since $f$ is perfect, both $k/f$ and $l/f$ are  formally smooth by Fact~\ref{lemma separable smooth criterion} (i).
        Thus the composite map $f\rightarrow k \rightarrow B/\mathfrak{m}_A B$ is formally smooth by Fact~\ref{lemma base change smooth} (ii).
The identity map $l \xrightarrow{=} B/\mf m _B$ lifts successively through the quotients $B/\mf m_B^{i+1} \rightarrow B/ \mf m_B^i$ for $1\le i < e$.
        Therefore we have an    isomorphism 
        $
         B/\mathfrak{m}_A B \cong l[X]/(X^e)
        $
      of $l$-algebras, where $X$ corresponds to a uniformizer of $B$.

 The formal smoothness of $f\rightarrow B/\mathfrak{m}_A B$ gives a section $ s:l[X]/(X^{e}) \rightarrow l[X]/(X^{e+1}) $ of $ l[X]/(X^{e+1}) \rightarrow l[X]/(X^e) $.
 Then $s(\overline{X})^e=0$.
     If $e >1$, this yields a contradiction since $s(\overline{X})$ is a generator of the maximal ideal $(X)/(X^{e+1})$.
     Therefore we get $e=1$, and hence $k\rightarrow B/\mathfrak{m}_AB=l$ is formally smooth (resp., formally \'{e}tale).

(iv) $\Rightarrow$ (i): 
  $A\rightarrow B$ is flat since  $B$ is  torsion-free over the DVR $A$. 
Then the repeated use of  Lemma~\ref{lemma smooth lifting} shows that $A/\mathfrak{m}_A^n \rightarrow B/\mathfrak{m}_B^n$ is formally smooth (resp., formally \'{e}tale) for all $n\ge 1$.

Let $N\subset C$ satisfy $N^2=0$, and let $u_B:B\longrightarrow C/N $ be an $A$-algebra homomorphism such that $u_B(\mathfrak m_B^{n_0})=0$. 
If $u_A:A\to C$ is the structure map, then $u_A(\mathfrak m_A^{n_0})\subset N, $ and hence $u_A(\mathfrak  m_A^{2n_0})=0$. 
Thus $u_A$ and $u_B$ induce the following commutative diagram.

 \begin{equation*}
    \begin{tikzcd}
        B \arrow[r]   & B/\mathfrak{m}_B^{2n_0} \arrow[r, "\overline{u}_B"]  \arrow[rd, dashed, "t'"]& C/N    \\
        A \arrow[r] \arrow[u]  & A/\mathfrak{m}_A^{2n_0} \arrow[r, "\overline{u}_A"] \arrow[u] & C  \arrow[from=1-1, to=2-3, dotted, "t" ] \arrow[u] 
    \end{tikzcd}
\end{equation*}
 The finite-level lifting property gives $t'$, and its composite with $B\to B/\mathfrak m_B^{2n_0}$ is the required lift $t$.
In the formally étale case, every lift annihilates  $\mathfrak m_B^{2n_0}$ and hence factors through the same quotient, so  its uniqueness follows from that of $t'$.
\end{proof}

\subsection{Lifting homomorphisms and Cohen subrings}

\begin{lemma}\label{lemma lifting p-basis}
 Let $(R_0, \mathfrak{m}_0, k_0 )\subset (R, \mathfrak{m}_R, k )$ be  DVRs with $\mathfrak{m}_0R=\mathfrak{m}_R$. 
 Let $(A, \mathfrak{m}_A, l)$ be a local $R_0$-algebra which is complete for  the $\mathfrak{m}_A$-adic topology.
Suppose $k/k_0$ is separable,  and let  
$
\phi_{m}:R\longrightarrow A/\mathfrak{m}_A^{m}
$
be  a local $R_0$-algebra homomorphism for some  $m\ge 1$.
Fix a differential basis  $\underline{\beta}$   of $k/k_0$, representatives   $\underline{b} $ of  $\underline{\beta}$ in $R$,  and representatives $\underline{b}'$  of $\phi_m(\underline{b})$ in    $A$.
Then there is a unique $R_0$-algebra lifting 
$$
\Phi:R\longrightarrow A 
$$
 of $\phi_{m}$ satisfying   $\Phi(\underline{b})=\underline{b}'$.
   \end{lemma}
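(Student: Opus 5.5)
Let $\underline{X}$ be indeterminates indexed like $\underline{\beta}$. I will factor $R_0\to R$ through the localized polynomial ring
$$
R_1:=R_0[\underline{X}]_{\mathfrak m_0R_0[\underline X]},\qquad \underline X\mapsto\underline b .
$$
By Fact~\ref{lemma separable smooth criterion}(ii), $\underline\beta$ is algebraically independent over $k_0$. Hence $\mathfrak m_0R_0[\underline X]$ is prime with residue field $k_0(\underline X)$. Consequently $R_1$ is a DVR with $\mathfrak m_0R_1$ maximal, and $R_1\to R$ is an injective local map of DVRs with $\mathfrak m_{R_1}R=\mathfrak m_R$ and residue extension $k/k_0(\underline\beta)$. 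That residue extension is formally étale by Fact~\ref{lemma separable smooth criterion}(ii). Lemma~\ref{lemma DVR smooth etale lift} (iv)$\Rightarrow$(i), with $e=1$, then shows that $R_1\to R$ is formally étale for the $\mathfrak m_R$-adic topology.

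For existence, first define $\Psi:R_1\to A$ as the $R_0$-algebra map with $\underline X\mapsto \underline b'$. This extends to the localization: an element outside $\mathfrak m_0R_0[\underline X]$ has a polynomial image whose reduction mod $\mathfrak m_A$ is the image under $\phi_m$ of a unit of $R$ (because $\underline b'\equiv\phi_m(\underline b)$). That reduction is therefore a unit, and since $A$ is local, the element itself maps to a unit.

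Next I lift level by level. I will construct compatible $R_1$-algebra maps $\Phi_n:R\to A/\mathfrak m_A^n$ for $n\ge m$, starting from $\Phi_m=\phi_m$. The map $\phi_m$ is indeed $R_1$-linear (for the structure $\Psi$ composed with the projection), since both $\phi_m\circ(R_1\to R)$ and $\Psi$ are $R_0$-maps agreeing on $\underline X$, and hence on the localization. Each $\Phi_n$ is local, and since $\mathfrak m_R=\mathfrak m_0R$ we get $\Phi_n(\mathfrak m_R^n)=0$. For the step to $n+1$, I apply formal étaleness of $R_1\to R$ with $C=A/\mathfrak m_A^{n+1}$ and $N=\mathfrak m_A^n/\mathfrak m_A^{n+1}$, where $N^2=0$. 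This gives a unique $R_1$-algebra lift $\Phi_{n+1}$. Passing to the inverse limit, and using completeness of $A$ ($A\cong\varprojlim A/\mathfrak m_A^n$, which I take to be included in the hypothesis), yields $\Phi:R\to A$. This $\Phi$ is an $R_1$-algebra map, so it is an $R_0$-map with $\Phi(\underline b)=\underline b'$, and it lifts $\phi_m$.

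For uniqueness, let $\Phi'$ be another $R_0$-lift of $\phi_m$ with $\Phi'(\underline b)=\underline b'$. Then $\Phi'$ restricted to $R_1$ equals $\Psi$, so $\Phi'$ is an $R_1$-algebra map. Induction on $n$, using the uniqueness part of formal étaleness at each step, gives $\Phi'\equiv\Phi$ mod $\mathfrak m_A^n$ for all $n\ge m$. Since $A$ is separated, $\Phi'=\Phi$.

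The main obstacle is checking that $R_1\to R$ really satisfies the hypotheses of Lemma~\ref{lemma DVR smooth etale lift}, in particular that $R_1$ is a DVR. The differential basis may be infinite, so $R_1$ need not be Noetherian in an obvious way. I will handle this by observing that $\mathfrak m_0R_0[\underline X]$ is generated by a uniformizer $\pi$ of $R_0$, and that every nonzero element of $R_1$ is $\pi^j$ times a unit, via the content/Gauss lemma over the DVR $R_0$. This makes $R_1$ a DVR with uniformizer $\pi$, which also generates $\mathfrak m_R$.
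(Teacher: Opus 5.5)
Your proof is correct and follows the paper's argument. Both factor through the localized polynomial ring $R_0[\underline b]_{(\pi_0)}$, show it is a DVR with uniformizer $\pi_0$, obtain formal \'etaleness of its inclusion into $R$ from Lemma~\ref{lemma DVR smooth etale lift}, then lift level by level through $A/\mathfrak m_A^{n}$ and pass to the inverse limit. The only cosmetic difference is that you extend $\underline X\mapsto\underline b'$ to the localization by hand, using that $A$ is local, whereas the paper instead notes that the localization map is formally \'etale and composes via Fact~\ref{lemma base change smooth}(ii).
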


\begin{proof}

By Fact~\ref{lemma separable smooth criterion} (ii),  $\underline{\beta}$ is algebraically independent over $k_0$, which implies
that $\underline{b}$ is algebraically independent over $R_0$.
We consider the polynomial ring  $R_0[\underline{b}]$, which is a subring of $R$.
Let $\pi_0$ be a uniformizer of $R_0$. 
Then  $\pi_0R_0[\underline{b}]=(\pi_0)$  is a  prime ideal   and the localization $R_0[\underline{b}]_{(\pi_0)}$    is   a subring of $R$.

We show   $R_0[\underline{b}]_{(\pi_0)}$ is a DVR with  uniformizer $\pi_0$.
    $R_0$-freeness of $R_0[\underline{b}]$ shows   $\bigcap_n(\pi_0)^n=0$.
Hence, for   non-zero $f(\underline{b}) \in R_0[\underline{b}]_{(\pi_0)}$, there is unique $d$ such that $f(\underline{b}) \in (\pi_0)^d  \setminus (\pi_0)^{d+1} $.
Then $(f(\underline{b}) )  = (\pi_0)^d $, so  $R_0[\underline{b}]_{(\pi_0)}$ is a DVR.
The residue field of $R_0[\underline{b}]_{(\pi_0)}$ is $k_0(\underline{\beta})$.

By  the universal property of localization, the natural map  $R_0[\underline{b}]\rightarrow  R_0[\underline{b}]_{(\pi_0)}$ is formally \'{e}tale.
Since $k/k_0(\underline{\beta})$ is formally \'{e}tale by Fact~\ref{lemma separable smooth criterion},  the natural embedding $ R_0[\underline{b}]_{(\pi_0)}\rightarrow R $ is formally \'{e}tale for the $\mathfrak{m}_R$-adic topology by Lemma~\ref{lemma DVR smooth etale lift}. 
Therefore, by  Fact~\ref{lemma base change smooth},   the composite map 
$$R_0[\underline{b}]\rightarrow R_0[\underline{b}]_{(\pi_0)} \rightarrow R$$
is formally \'{e}tale for the $\mathfrak{m}_R$-adic topology.

It remains  to construct a map $\Phi:R\rightarrow A$ satisfying the desired properties.
There is a unique $R_0$-algebra homomorphism $  R_0[\underline{b}]\rightarrow A$ sending $\underline{b}$ to   $\underline{b}'$, which is compatible with  $\phi_m$.

Next,  we find inductively a unique map  $\phi_{n+1}:R\rightarrow A/\mathfrak{m}_A^{n+1}  $    such that the following diagram

 \begin{equation*}
    \begin{tikzcd}
        R   \arrow[rr,"\phi_n"]  \arrow[rrd,dotted,"\phi_{n+1}"]   & & A/\mathfrak{m}_A^n       \\
        R_0[\underline{b}] \arrow[u]  \arrow[r]  &   A  \arrow[r]    &  A/\mathfrak{m}_A^{n+1} \arrow[u]
    \end{tikzcd}
\end{equation*}
is commutative for $n\ge m$.

We get the desired map by defining
$$\Phi:=\varprojlim_n \phi_n:R\longrightarrow  A=\varprojlim_nA/\mathfrak{m}_A^{n}.$$ 
The uniqueness follows directly from the construction.
 \end{proof}

 \begin{remark}
  
In the lemma above, we must fix the representatives before claiming uniqueness.
 Also, a $p$-basis need not be a separating transcendence basis.
For example, the perfect closure of $\F_p(t)$ has empty $p$-basis over $\F_p$ and is not algebraic over $\F_p$.
The revised proof and Remark A of \cite[Lemma 4.2]{ADJ24} explain this distinction (see also \cite[Remark 2.28]{Dit26}).
Lemma~\ref{lemma lifting p-basis} uses formal \'etaleness,
so it does not require a separating transcendence basis.

\end{remark}

\begin{fact} [{\cite[Tag 03C3]{Stacks}}] \label{fact: existence DVR extension}
For a DVR  $(R_0, \mathfrak{m}_0, k_0)$ and an  extension $k/k_0$, one can always find a DVR $(R, \mathfrak{m}_R, k)$ extending  $(R_0, \mathfrak{m}_0, k_0)$ such that  $\mathfrak{m}_0R=\mathfrak{m}_R$.

\end{fact}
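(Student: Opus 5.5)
The plan is to build $R$ by transfinite induction, adjoining one generator of $k$ over $k_0$ at a time. Each step keeps $\pi_0$, a uniformizer of $R_0$, as a uniformizer, and keeps the residue field identified with a subfield of $k$.

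Fix a well-ordered generating family $(\alpha_\xi)_{\xi<\kappa}$ of $k$ over $k_0$, and put $k_\xi:=k_0(\alpha_\eta:\eta<\xi)$. I will construct an increasing chain of DVRs $(R_\xi,\mathfrak m_\xi)$ with the following properties:
\begin{itemize}
\item $R_0\subseteq R_\xi\subseteq R_{\xi'}$ for $\xi\le\xi'$;
\item $\mathfrak m_\xi=\pi_0R_\xi$;
\item each residue field is identified with $k_\xi$, compatibly along the chain.
\end{itemize}
Then $R:=R_\kappa$ works. There are two successor steps and one limit step.

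\emph{Transcendental successor step.} Suppose $\alpha:=\alpha_\xi$ is transcendental over $k_\xi$. Take $R_{\xi+1}:=R_\xi[X]_{(\pi_0)}$. Exactly as in the proof of Lemma~\ref{lemma lifting p-basis}, $(\pi_0)$ is prime in $R_\xi[X]$ and $\bigcap_n\pi_0^nR_\xi[X]=0$ by freeness over $R_\xi$. Hence every nonzero element of the localization is $\pi_0^d$ times a unit, so $R_{\xi+1}$ is a DVR with uniformizer $\pi_0$. Its residue field is $k_\xi(X)$, which we identify with $k_\xi(\alpha)$ via $X\mapsto\alpha$.

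\emph{Algebraic successor step.} Suppose $\alpha$ is algebraic over $k_\xi$ with minimal polynomial $\bar f$. Lift $\bar f$ to a monic $f\in R_\xi[X]$ and set $R_{\xi+1}:=R_\xi[X]/(f)$. This ring is finite free over $R_\xi$, so it is $\pi_0$-adically separated. We have
$$R_{\xi+1}/\pi_0R_{\xi+1}\cong k_\xi[X]/(\bar f)\cong k_\xi(\alpha),$$
which is a field, so $\pi_0R_{\xi+1}$ is a maximal ideal. Every maximal ideal of $R_{\xi+1}$ lies over $\mathfrak m_\xi$ by integrality, so $\pi_0R_{\xi+1}$ is the unique maximal ideal and $R_{\xi+1}$ is local. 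It is a domain: $f$ is irreducible over $\operatorname{Frac}(R_\xi)$ by Gauss's lemma, since its reduction is irreducible. It is Noetherian, local, and has principal nonzero maximal ideal, hence it is a DVR with uniformizer $\pi_0$.

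\emph{Limit step.} At a limit ordinal take the union $R_\lambda:=\bigcup_{\xi<\lambda}R_\xi$. Any nonzero $x$ lies in some $R_\xi$, so $x=\pi_0^d u$ with $u\in R_\xi^\times\subseteq R_\lambda^\times$. The exponent $d$ does not depend on the stage, because the valuations are compatible along the chain (a unit at one stage stays a unit). Thus $R_\lambda$ is a local domain in which every nonzero element is $\pi_0^d\cdot$unit with $d$ well defined, so it is a DVR with uniformizer $\pi_0$. Its residue field is $\bigcup_\xi k_\xi=k_\lambda$.

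The main point requiring care is the bookkeeping of the residue field identifications with subfields of $k$, so that the final residue field is literally $k$ and not just abstractly isomorphic to it. The other point to watch is the irreducibility and domain argument in the algebraic step; I expect this to go through via Gauss's lemma as described.
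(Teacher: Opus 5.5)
Your proof is correct, and it is essentially the transfinite-induction argument of the cited Stacks Project lemma: localize a polynomial ring at $\pi_0$ for transcendental generators, adjoin a root of a monic lift of the minimal polynomial for algebraic ones, and take unions at limit stages. The paper itself only cites this lemma and gives no argument. Because you work with DVRs rather than general Noetherian local rings, you can handle the limit step directly through the unique factorization $x=\pi_0^d u$ instead of needing a lemma on Noetherianity of flat colimits; in the algebraic step, the domain property also uses the (standard, unstated) fact that $R_\xi[X]/(f)$ embeds in $\operatorname{Frac}(R_\xi)[X]/(f)$ because $f$ is monic.
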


\begin{corollary}\label{corollary: Cohen subring existence}

 Let $(K,v)$ be a complete $\mathbb Z$-valued field of mixed characteristic $(0,p)$ with $v(p)=e$, and put $k=Kv$.
\begin{enumerate}
  \item
 There is a Cohen subring $C\subset \mathcal O_v$ inducing an isomorphism
$
C/pC\cong k.
$
  \item  
More generally, let $(R,\mf m_R,k_0)\subset \mathcal O_v$ be a complete   DVR   such that $\mf m_R=pR=R\cap\mathfrak m_v$.
If $k/k_0$ is separable, then $C$ in\/ {\rm (i)} may be chosen to contain $R$.
\end{enumerate}
For every such $C$ and   uniformizer $\pi$ of $\mathcal O_v$,
$$
\mathcal O_v=\bigoplus_{i=0}^{e-1}C\pi^i.
$$
In particular, $K/\operatorname{Frac}(C)$ is totally ramified of
degree $e$.
\end{corollary}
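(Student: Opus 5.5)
The plan is to obtain (ii) directly from Lemma~\ref{lemma lifting p-basis} and to get (i) as the special case $R=\mathbb Z_p$. For (i), the prime ring $\mathbb Z\subset\mathcal O_v$ has closure $\mathbb Z_p$, since $K$ is complete and $v(p)=e>0$. This $\mathbb Z_p$ is a complete DVR with $pR=R\cap\mathfrak m_v$ and residue field $\mathbb F_p$. The extension $k/\mathbb F_p$ is separable because $\mathbb F_p$ is perfect. Hence (i) follows from (ii).

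For (ii), apply Fact~\ref{fact: existence DVR extension} to $(R,pR,k_0)$ and the extension $k/k_0$. This gives a DVR $R'\supseteq R$ with maximal ideal $pR'$ and residue field $k$. Replace it by its completion $\widehat{R'}$, which is still a DVR with maximal ideal generated by $p$ and residue field $k$. Now apply Lemma~\ref{lemma lifting p-basis} with the following data:
\begin{itemize}
\item $R_0=R$, and $R$ of the lemma taken to be $R'$;
\item $A=\mathcal O_v$, which is complete local and an $R$-algebra via the inclusion;
\item $m=1$, with $\phi_1:R'\to R'/pR'=k\cong \mathcal O_v/\mathfrak m_v$ the identity of $k$.
\end{itemize}
Note that $\phi_1$ is an $R$-algebra map, because $R\to k_0\subseteq k$ is compatible on both sides. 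The lemma gives an $R$-algebra lift $\Phi:R'\to\mathcal O_v$. Since $\Phi$ is continuous, it extends to $\widehat{R'}$.

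Set $C:=\Phi(\widehat{R'})$. We check that $\Phi$ is injective, so that $C$ is a Cohen ring.
\begin{itemize}
\item Any nonzero $x\in\widehat{R'}$ can be written $x=p^du$ with $u$ a unit.
\item Then $\Phi(x)=p^d\Phi(u)$, and $\Phi(u)$ is a unit because its residue is $\bar u\neq0$.
\item Since $\mathcal O_v$ is a domain of characteristic $0$, $\Phi(x)\neq0$.
\end{itemize}
It follows that $C$ contains $R$ and is a complete DVR with uniformizer $p$. Also $C/pC\to \mathcal O_v/\mathfrak m_v=k$ is the identity, hence an isomorphism.

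For the decomposition, fix a uniformizer $\pi$ and consider $M=\bigoplus_{i<e}C\pi^i$.
\begin{itemize}
\item \emph{Generation modulo $p$.} Since $\mathcal O_v/p\mathcal O_v=\mathcal O_v/\mathfrak m_v^e$ has a filtration with quotients $\mathfrak m_v^i/\mathfrak m_v^{i+1}\cong k$, each spanned by $\pi^i$ times residues of $C$, the images of $1,\pi,\dots,\pi^{e-1}$ generate $\mathcal O_v/p\mathcal O_v$ over $C/pC$.
\item \emph{Generation.} $\mathcal O_v$ is $p$-adically complete and separated, and $C$ is complete. So successive approximation (Nakayama for complete modules) shows that these elements generate $\mathcal O_v$ over $C$.
\item \emph{Directness.} Suppose $\sum c_i\pi^i=0$ with not all $c_i$ zero. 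Divide by the largest common power of $p$, so that some $c_i$ is a unit. The terms $c_i\pi^i$ then have pairwise distinct valuations modulo $e$, namely $ev(c_i)+i$. The ultrametric inequality gives the minimum valuation, a contradiction.
\end{itemize}
Thus $\mathcal O_v$ is free of rank $e$ over $C$. Consequently $[K:\operatorname{Frac}(C)]=e$, which equals the index of the value groups $v(\operatorname{Frac}C)=e\mathbb Z$ in $\mathbb Z$. So $K/\operatorname{Frac}(C)$ is totally ramified of degree $e$.

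The main obstacle is checking the hypotheses of Lemma~\ref{lemma lifting p-basis}. In particular, $\phi_1$ must be a genuine local $R$-algebra map, which requires the residue field of $R\subseteq\mathcal O_v$ to sit inside $k$ compatibly. It also needs the fact that completing $R'$ preserves the maximal ideal $pR'$.
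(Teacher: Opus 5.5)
Your proposal is correct and follows the paper's route. You extend $R$ via Fact~\ref{fact: existence DVR extension}, complete, and lift the residue identification with Lemma~\ref{lemma lifting p-basis}; the decomposition then comes from successive reduction modulo $p\mathcal O_v=\mathfrak m_v^e$ plus the fact that the nonzero summands $c_i\pi^i$ have distinct valuations modulo $e$. Your injectivity check and the derivation of (i) from (ii) with $R=\mathbb Z_p$ only make explicit what the paper leaves implicit; one small slip is that the valuation of $c_i\pi^i$ is $v(c_i)+i=e\,v_C(c_i)+i$, not $ev(c_i)+i$.
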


\begin{proof}
After completing the DVR supplied by Fact~\ref{fact: existence DVR extension}, the existence assertions follow from Lemma~\ref{lemma lifting p-basis}; see also the proof of \cite[Lemma~4.11]{ADJ24}.
For a uniformizer $\pi$ of $\mathcal O_v$, successive reduction modulo
$
p\mathcal O_v=\mathfrak m_v^e
$
and completeness give
$
\mathcal O_v=\sum_{i=0}^{e-1}C\pi^i.
$
The sum is direct since its nonzero summands have distinct valuations
modulo $e$.  
\end{proof}
 
 Without a separability assumption on the residue field extension, such a subring cannot always be chosen to contain a prescribed absolutely unramified sub-DVR, as in the following example.

\begin{example}
  
Set
$$
R=\widehat{\mathbb Z[T]_{(p)}}, \ \ A=R[X]/\bigl((X^p-T)^2-p\bigr),
$$
where $\widehat{(\bullet)}$ denotes $p$-adic  completion.
Then $R$ is a DVR with uniformizer $p$.
Write $x$ for the image of $X$ and put $\pi=x^p-T$, so  $\pi^2=p$.
Since $A$ is  a finitely generated free   $R$-module and $X^p-\overline T$ is irreducible over $\mb F_p(\overline T)$, the ring $A$ is a  $1$-dimensional regular local ring with maximal ideal $(\pi)$, i.e.,  a DVR. 
Its residue field is $\mathbb F_p(\overline x)$, where $\bar x^p=\overline T$, a purely inseparable extension of $R/pR=\mb F_p(\overline T)$ of degree $p$.

Suppose that a Cohen subring $C\subset A$ contains $R$ and induces an isomorphism $C/pC\to A/\pi A$.
Choose $b\in C$ with $b\equiv x\pmod{\pi A}$.
Since $pA=\pi^2A$, we have
$$
b^p-T\equiv x^p-T=\pi\not\equiv0\pmod{pA}.
$$
On the other hand, $\bar b^p=\overline T$ implies $b^p-T\in pC\subset pA$, a contradiction.
Therefore, there does not exist a Cohen subring  of $A$ which contains $R$.
\end{example}

\begin{remark} 
In  Lemma~\ref{lemma lifting p-basis}, suppose that $R_0$ and $R$ are complete and  of characteristic $0$,  and that   $k_0$  is a finite field.
 We apply the lemma with   $A=W(k)_{R_0}$  and $l=k$,  where $W(k)_{R_0}$ denotes  the ring of  $R_0$-ramified Witt vectors with coefficients in  $k$.
 We refer to \cite[Chapter 1]{Sch17}   for the theory of ramified Witt vectors.
 
 Since $W(k)_{R_0} $ is a complete local ring, it follows from  Lemma~\ref{lemma lifting p-basis} that there  exists a unique $R_0$-algebra homomorphism $R\rightarrow W(k)_{R_0}$ that sends $\underline{b}$ to $\underline{b}'$ and  induces the identity map on the residue fields. 
 This is a generalization of the classical fact that, given a field $k$ of characteristic $p$ and  its ring of Witt vectors $W(k)$, there exists a unique Cohen ring in $W(k)$ that contains a given lifting of a $p$-basis of $k$.
 
\end{remark}

The hypothesis $\mathfrak m_0R=\mathfrak m_R$ in Lemma~2.8 is
essential.
If $  \mathfrak m_0R=\mathfrak m_R^e $ with $e>1$, exact lifting may fail (see  \cite[Example~3.5]{LL21}). 
The root estimates (Lemmas~\ref{lem:modified Krasner} and \ref{lemma:different and krasner bound}) in the next subsection  will be used in Section~\ref{sec:embedding lemma} to obtain approximate liftings with explicit  bounds on the error of lifting.
 
\subsection{Totally ramified extensions and Krasner's lemma}

We recall some facts on a totally ramified extension and an Eisenstein polynomial.

\begin{fact}[{\cite[Chapter 2]{Lang94}}]\label{fact:totally ramified and Eisenstein}
 	A finite extension of henselian discretely valued fields   $(F,v_0)\subseteq (K,v)$ of mixed characteristic $(0,p)$ is totally ramified of degree $d$ if and only if $K$ is generated by a root $\pi \in \mathcal O_v$ of a monic Eisenstein polynomial $f(X)$ of degree $d$ over $\mc O_{v_0}$, i.e.,   
    $$f(X)=X^d+a_{d-1}X^{d-1}+\cdots+a_1X+a_0,\ \  a_i\in \mathfrak{m}_{v_0},  \ a_0\notin   \mathfrak{m}_{v_0}^2.$$
  In that case, 
  \begin{enumerate}
  \item 
  we have an $\mc O_{v_0}$-decomposition 
    $$
    \mc O_v =\bigoplus_{0 \le i < d} \mc O_{v_0} \pi^i;
    $$
    \item 
          if $p$ does not divide $d$, $\pi$ can be chosen so that $f(X)=X^d-\pi_0 u $  for a base uniformizer $\pi_0$ and  $u\in \mathcal O_{v_0}^{\times}$.
   
\end{enumerate}

\end{fact}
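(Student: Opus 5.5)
The plan is to prove the equivalence and the two supplementary assertions directly from the structure of the valuation ring. The main device is repeated Euclidean division by $f$ combined with Hensel-type arguments. Throughout, $\pi_0$ denotes a uniformizer of $\mathcal O_{v_0}$ and we normalize $v(\pi_0)=d$ when $(K,v)/(F,v_0)$ is totally ramified of degree $d$.

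\emph{From total ramification to an Eisenstein generator.} Assume $(K,v)/(F,v_0)$ is totally ramified of degree $d$, and let $\pi$ be a uniformizer of $\mathcal O_v$. The values $v(\pi^i)=i$ for $0\le i<d$ are pairwise distinct modulo $d$. Hence for $c_i\in F$ we get $v(\sum c_i\pi^i)=\min_i v(c_i\pi^i)$. Consequently $1,\pi,\dots,\pi^{d-1}$ are $F$-linearly independent, so $K=F(\pi)$. The same valuation formula shows $\sum c_i\pi^i\in\mathcal O_v$ if and only if every $c_i\in\mathcal O_{v_0}$, so $\bigoplus_{i<d}\mathcal O_{v_0}\pi^i\subseteq\mathcal O_v$ is saturated. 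To see that it is all of $\mathcal O_v$, write $y\in\mathcal O_v$ as $y=\sum c_i\pi^i$ with $c_i\in F$, which is possible because the $\pi^i$ form an $F$-basis. The valuation formula then forces $c_i\in\mathcal O_{v_0}$, which gives (i).

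Now let $f$ be the minimal polynomial of $\pi$ over $F$. Since $F$ is henselian, all conjugates of $\pi$ have the same valuation $1$. So every coefficient $a_i$ of $f$ is a symmetric function of valuation $\ge d-i>0$, hence lies in $\mathfrak m_{v_0}$. The constant term satisfies $v(a_0)=d$, so $v_0(a_0)=1$ and $a_0\notin\mathfrak m_{v_0}^2$. Thus $f$ is Eisenstein.

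\emph{From an Eisenstein generator to total ramification.} Conversely, suppose $\pi$ is a root of a monic Eisenstein $f$ of degree $d$. Then $f$ is irreducible, so $[K:F]=d$. From $\pi^d=-\sum_{i<d}a_i\pi^i$ and the ultrametric inequality we get $dv(\pi)=v(a_0)=v(\pi_0)$. Therefore $v(\pi)=v(\pi_0)/d$, and the ramification index is at least $d$. The fundamental inequality $ef\le d$ then forces ramification index exactly $d$ and trivial residue extension. This is total ramification, and (i) follows as before.

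\emph{Assertion (ii) for $p\nmid d$.} Start from any uniformizer $\pi$. Write $\pi^d=\pi_0 w$ with $w\in\mathcal O_v^\times$. Since the residue extension is trivial, there are $u\in\mathcal O_{v_0}^\times$ and $\bar w=\bar u$, so $w/u\equiv1\pmod{\mathfrak m_v}$. Because $p\nmid d$, Hensel's lemma applied to $X^d-w/u$ gives a $d$-th root $\epsilon\in 1+\mathfrak m_v$ of $w/u$. Then $\pi'=\pi\epsilon^{-1}$ is again a uniformizer, and $\pi'^d=\pi_0u$. So $X^d-\pi_0u$ is the Eisenstein polynomial of the uniformizer $\pi'$, which generates $K$ by the first part.

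\emph{Main obstacle.} The only delicate point is the first step: deducing the valuations of the coefficients of $f$. This requires uniqueness of the extension of $v$ to a normal closure, which is exactly where henselianity is used. The remaining steps are routine.
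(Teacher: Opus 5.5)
Your argument is correct and is the standard textbook proof behind the citation; the paper gives no proof of this Fact beyond the reference to Lang, Ch.~II. You use the same steps: an $F$-basis $1,\pi,\dots,\pi^{d-1}$ obtained from valuations that are distinct modulo $d$, the Eisenstein property from conjugates having equal valuation, the fundamental inequality for the converse, and Hensel's lemma applied to $X^d-w/u$ in the tame case. The only compressed step is $dv(\pi)=v(a_0)$, which first needs $v(\pi)>0$; this is immediate since $\pi\in\mathcal O_v$ and the right-hand side of $\pi^d=-\sum_{i<d}a_i\pi^i$ lies in $\mathfrak m_v$. Your proof also works directly in the henselian (not only complete) setting, since it uses only uniqueness of extensions of the valuation and Hensel's lemma.
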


 \begin{fact}[Krasner's Lemma]\label{fact:Krasner}
Let $(K,v)$ be    henselian   and let $(K^{sep},v)$ be a separable closure of $K$. 
Let $\alpha,\beta\in K^{sep}$ be such that $$v(\beta-\alpha)>v(\alpha'-\alpha)$$ for any conjugate $\alpha'(\neq \alpha)$ of $\alpha$ over $K$. 
Then, $\alpha\in K(\beta)$.
\end{fact}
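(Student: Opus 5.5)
The plan is to use the classical proof by contradiction. Its only input is the uniqueness of the extension of the valuation from a henselian field to its separable closure. Throughout, fix the unique extension of $v$ to $K^{sep}$, still denoted $v$. Henselianity of $(K,v)$ is equivalent to this uniqueness, and it passes to every intermediate field $K\subseteq M\subseteq K^{sep}$. The consequence we need is: for every $\sigma\in\Gal(K^{sep}/M)$, the map $v\circ\sigma$ is again a valuation on $K^{sep}$ extending $v|_M$, so $v\circ\sigma=v$.

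First, set $M:=K(\beta)$ and suppose, towards a contradiction, that $\alpha\notin M$. Since $\alpha\in K^{sep}$, it is separable over $M$, and its minimal polynomial over $M$ has degree $>1$. Hence there is a root $\alpha'\neq\alpha$ of it, and (extending an $M$-embedding $M(\alpha)\to K^{sep}$) an automorphism $\sigma\in\Gal(K^{sep}/M)$ with $\sigma(\alpha)=\alpha'$. The minimal polynomial of $\alpha$ over $M$ divides its minimal polynomial over $K$, so $\alpha'$ is a $K$-conjugate of $\alpha$ different from $\alpha$. The hypothesis therefore applies to it: $v(\beta-\alpha)>v(\alpha'-\alpha)$.

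Next, since $\sigma$ fixes $\beta$ and $v\circ\sigma=v$ by the consequence above, we get
$$v(\beta-\alpha')=v(\sigma(\beta-\alpha))=v(\beta-\alpha).$$
The ultrametric inequality then gives
$$v(\alpha'-\alpha)\ \ge\ \min\{v(\alpha'-\beta),\,v(\beta-\alpha)\}\ =\ v(\beta-\alpha)\ >\ v(\alpha'-\alpha),$$
which is a contradiction, so $\alpha\in K(\beta)$.

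The only step requiring care is the invariance $v\circ\sigma=v$. This is exactly where henselianity is used: without it, $\sigma$ could move $v$ to a different extension, and the two distances $v(\beta-\alpha')$ and $v(\beta-\alpha)$ need not agree. I would justify it directly from the uniqueness of extensions of henselian valuations to algebraic extensions, applied to $M/K$ and then to $K^{sep}/M$. The remaining points (existence of $\sigma$, and that conjugates over $M$ are conjugates over $K$) are routine Galois theory.
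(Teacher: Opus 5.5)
Your argument is correct and complete. It is the standard textbook proof of Krasner's Lemma: conjugate $\alpha$ by some $\sigma\in\Gal(K^{sep}/K(\beta))$, use $v\circ\sigma=v$, and apply the ultrametric inequality. The paper itself gives no proof and simply quotes the statement as a known fact.

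One small simplification: every $\sigma\in\Gal(K^{sep}/K)$ fixes $K$, so $v\circ\sigma$ is already an extension of $v|_K$ to $K^{sep}$, and uniqueness over $K$ gives $v\circ\sigma=v$ directly without passing through $M$.
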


Let $(K, v)$ be a henselian field and  let $f(x)\in K[x]$ be   irreducible and separable of degree $d>1$.
For its roots $\alpha_1, \dots, \alpha_d$ in $K^{sep}$, set
$$
\Delta_v(f):=v(f'(\alpha_1))  \ \hbox { and } \  \kappa_v(f):= \mathrm{max}_{j\neq 1} \{ v(\alpha_1 -\alpha_j) \}.
$$
 Note that the definitions of $\Delta_v(f) $ and $\kappa_v(f)$ do not depend on the choice of $\alpha_1$.
The following lemmas will play an important role in Section~\ref{sec:embedding lemma}.
\begin{lemma}\label{lem:modified Krasner}

 Let $(F,v_0)\subseteq (K,v)$ be an extension of henselian  fields.
Let $f(X)\in F[X]$ be a monic irreducible separable polynomial and  $\alpha_1, \dots, \alpha_d$  roots of $f$ in $K^{sep}$.
  Choose $\beta\in K$, which is not necessarily algebraic or separable over $F$.
\begin{enumerate}
    \item 
 If  $ v(\beta-\alpha_i)>  \kappa_v(f) $, then $\alpha_i \in K$.  
 
    \item  
   
  If $v(f(\beta)) > \Delta_v(f) + \kappa_v(f)$, then  there is a unique root  $\alpha_i\in K$   such that  
  $$
  v(\beta-\alpha_i) = v (f(\beta)) - \Delta_v(f) > \kappa_v(f).
  $$
   \end{enumerate}
   \end{lemma}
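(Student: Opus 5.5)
Part (i) is a direct consequence of Krasner's Lemma (Fact~\ref{fact:Krasner}). I will extend $v$ to a henselian valuation on $K^{sep}$, still denoted $v$; it is unique because $K$ is henselian. The conjugates of $\alpha_i$ over $K$ form a subset of the roots of $f$ other than $\alpha_i$. Since $\Delta_v(f)$ and $\kappa_v(f)$ do not depend on the chosen root, $\kappa_v(f)=\max_{j\neq i} v(\alpha_i-\alpha_j)$. The hypothesis $v(\beta-\alpha_i)>\kappa_v(f)$ therefore exceeds $v(\alpha'-\alpha_i)$ for every conjugate $\alpha'\ne\alpha_i$ of $\alpha_i$ over $K$. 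Krasner's Lemma, applied over $K$ with $\beta\in K$, gives $\alpha_i\in K(\beta)=K$.

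For (ii), I will factor $f(\beta)=\prod_j(\beta-\alpha_j)$ and choose an index $i$ with $v(\beta-\alpha_i)$ maximal. The key step is to show $v(\beta-\alpha_i)>\kappa_v(f)$.

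Suppose instead that $\delta:=v(\beta-\alpha_i)\le\kappa_v(f)$. For every $j$ we have $v(\beta-\alpha_j)\le\delta$. This gives
\[
v(\alpha_i-\alpha_j)\ge\min\{v(\beta-\alpha_i),v(\beta-\alpha_j)\}=v(\beta-\alpha_j),
\]
and hence
\[
v(f(\beta))=\sum_j v(\beta-\alpha_j)\le \delta+\sum_{j\ne i}v(\alpha_i-\alpha_j)=\delta+\Delta_v(f)\le \kappa_v(f)+\Delta_v(f),
\]
using $f'(\alpha_i)=\prod_{j\neq i}(\alpha_i-\alpha_j)$. This contradicts the hypothesis, so $\delta>\kappa_v(f)$.

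Once $\delta>\kappa_v(f)$ holds, for each $j\ne i$ we have $v(\alpha_i-\alpha_j)\le\kappa_v(f)<\delta=v(\beta-\alpha_i)$. The ultrametric equality then gives $v(\beta-\alpha_j)=v(\alpha_i-\alpha_j)$. Summing yields $v(f(\beta))=\delta+\Delta_v(f)$, which is exactly the required equality $v(\beta-\alpha_i)=v(f(\beta))-\Delta_v(f)$. Part (i) then gives $\alpha_i\in K$.

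For uniqueness, suppose two distinct roots $\alpha_i,\alpha_{i'}$ both satisfy $v(\beta-\alpha)>\kappa_v(f)$. Then $v(\alpha_i-\alpha_{i'})>\kappa_v(f)$, contradicting the definition of $\kappa_v(f)$.

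The only point needing care is the inequality step in the contradiction argument, namely the case distinction on which of $v(\beta-\alpha_i)$, $v(\beta-\alpha_j)$ is smaller. I do not expect a genuine obstacle. The remaining technical check is that the normalization of $v$ on $K^{sep}$ is consistent with the definitions of $\Delta_v$ and $\kappa_v$, which holds by uniqueness of the extension of the valuation of the henselian field $K$.
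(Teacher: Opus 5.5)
Your proposal is correct and follows the paper's proof essentially step for step. Part (i) applies Krasner's Lemma to the conjugates of $\alpha_i$ over $K$, which are roots of $f$. Part (ii) chooses the root closest to $\beta$, rules out $v(\beta-\alpha_i)\le\kappa_v(f)$ through the bound $v(f(\beta))\le\kappa_v(f)+\Delta_v(f)$, and then obtains the equality, membership $\alpha_i\in K$ from (i), and uniqueness from the ultrametric inequality.
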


\begin{proof}
(i) Let $g(X)\in K[X]$ be the minimal polynomial of $\alpha_i$ over $K$, which divides $f(X)$ in $K[X]$.
So, any  root $\alpha'(\neq \alpha_i)$ of $g$ is also a root  of $f$ and 
$$v(\beta-\alpha_i)>v(\alpha'-\alpha_i).$$ 
Thus, by Fact \ref{fact:Krasner}, we have  $\alpha_i \in K(\beta)=K$.  

(ii)
Without loss of generality, we may assume  $ v(\beta-\alpha_1) = \mathrm{max}_j \{  v(\beta-\alpha_j) \}$.
Then 
$$
v(\alpha_1 - \alpha_j)  \ge \mathrm{min} \{  v(\beta-\alpha_1),  v(\beta-\alpha_j) \}=  v(\beta-\alpha_j)
$$
for $j\neq 1$.
If $ v(\beta-\alpha_1) \le \kappa_v(f)$, 
\begin{align*}
v(f(\beta))  &=  \sum _{1\le j\le d}  v(\beta-\alpha_j)   \le  v(\beta-\alpha_1) +\sum_{j\neq 1} v(\alpha_1 -\alpha_j)   \le \kappa_v(f) +\Delta_v(f) ,
\end{align*}
which gives a contradiction.
Hence 
\begin{equation*}  
v(\beta-\alpha_1) > \kappa_v(f)  \  \hbox{ and }  \ v(\beta-\alpha_j ) = v(\alpha_1 -\alpha_j).
\end{equation*}
By (i), we have $\alpha_1 \in K $.
Furthermore, we have 
$$
v(f(\beta)) = v(\beta-\alpha_1) + \sum_{j\neq 1}v(\beta-\alpha_j) = v(\beta-\alpha_1) + \Delta_v(f).
$$
The uniqueness follows immediately from the ultrametric inequality.
\end{proof}
 
\begin{lemma} \label{lemma:different and krasner bound}
 
Let $f=\sum_{i=0}^d a_iX^i$ be an Eisenstein
polynomial of degree $d>1$ over a henselian discretely
valued field $F$ of mixed characteristic $(0,p)$.
Let $\alpha$ be a root of $f$ and put $K=F(\alpha)$.
Normalize the valuation $v$ on $K$ by $v(\alpha)=1$,
extend it to $K^{sep}$, and put $e_K=v(p)$.
Then we have
$$
\Delta_v(f)
=\min_{1\le i\le d}\{v(a_i)+e_Kv_p(i)+i-1\}
$$
and 
$$
\begin{aligned}
\Delta_v(f)&=d-1,
&\kappa_v(f)&=1
&&\text{if }p\nmid d,\\[4pt]
\Delta_v(f)&\le d-1+e_Kv_p(d),
&\kappa_v(f)&\le1+\frac{e_K}{p-1}
&&\text{if }p\mid d.
\end{aligned}
$$

\end{lemma}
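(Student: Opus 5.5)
The proof reduces everything to valuations of explicit sums. First I record what the normalization forces. Since $f$ is Eisenstein of degree $d$, $K/F$ is totally ramified of degree $d$ by Fact~\ref{fact:totally ramified and Eisenstein}. With $v(\alpha)=1$ we get $v(F^\times)=d\mathbb Z$. Hence $v(a_i)\in d\mathbb Z$, with $v(a_i)\ge d$ for $i<d$ and $a_d=1$. Also $e_K=v(p)\in d\mathbb Z$. All roots $\alpha_j$ are conjugate to $\alpha$, so $v(\alpha_j)=1$.

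\emph{Formula for $\Delta_v(f)$.} Write
$$
f'(\alpha)=\sum_{i=1}^d i\,a_i\alpha^{i-1}.
$$
The $i$-th term has valuation $v(a_i)+e_Kv_p(i)+i-1$, which is $\equiv i-1 \pmod d$ because $v(a_i)$ and $e_K$ lie in $d\mathbb Z$. For $1\le i\le d$ these residues are pairwise distinct. So the term valuations are pairwise distinct, the minimum is attained exactly once, and the ultrametric equality gives the stated formula for $\Delta_v(f)$.

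\emph{Case $p\nmid d$.} The term $i=d$ has valuation $d-1$. Every other term has valuation at least $v(a_i)\ge d$. Hence $\Delta_v(f)=d-1$. Next, $\Delta_v(f)=\sum_{j\ne1}v(\alpha_1-\alpha_j)$ is a sum of $d-1$ terms, each $\ge1$ since all $\alpha_j$ have valuation $1$. The sum equals $d-1$, so every term equals $1$, and therefore $\kappa_v(f)=1$.

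\emph{Case $p\mid d$.} The bound $\Delta_v(f)\le d-1+e_Kv_p(d)$ is just the $i=d$ term of the minimum. For $\kappa_v(f)$ I would use a Newton polygon argument. Put
$$
h(Y)=f(\alpha+Y)=\sum_{k\ge1}c_kY^k,\qquad c_k=\sum_{i\ge k}\binom ik a_i\alpha^{i-k},
$$
so that $c_1=f'(\alpha)$, $v(c_1)=\Delta_v(f)$, and $c_d=1$. The roots of $h(Y)/Y$ are the differences $\alpha_j-\alpha_1$ for $j\ne1$. The largest root valuation is the slope of the first segment of the Newton polygon of $h(Y)/Y$, which gives
$$
\kappa_v(f)=\max_{2\le k\le d}\frac{v(c_1)-v(c_k)}{k-1}.
$$
By the ultrametric inequality, $v(c_k)\ge\min_i\bigl(v(a_i)+e_Kv_p\binom ik+i-k\bigr)$. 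For each $i$, the formula for $\Delta$ gives $\Delta_v(f)\le v(a_i)+e_Kv_p(i)+i-1$. Subtracting, we get
$$
\Delta_v(f)-v(c_k)\le\max_i\Bigl(e_K\bigl(v_p(i)-v_p\tbinom ik\bigr)\Bigr)+k-1.
$$
The identity $\binom ik=\frac ik\binom{i-1}{k-1}$ gives $v_p(i)-v_p\binom ik\le v_p(k)$. Combined with $v_p(k)\le\log_p k\le\frac{k-1}{p-1}$, this yields
$$
\Delta_v(f)-v(c_k)\le (k-1)\Bigl(1+\frac{e_K}{p-1}\Bigr).
$$
Dividing by $k-1$ gives $\kappa_v(f)\le1+\frac{e_K}{p-1}$.

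I expect the main obstacle to be the Newton polygon step. The issue is the bookkeeping that links $v(c_1)$ to the individual lower bounds for $v(c_k)$. This is handled by comparing $\Delta_v(f)$ termwise with the same index $i$ that realizes the minimum in the lower bound for $v(c_k)$, as done above.
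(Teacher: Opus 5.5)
Your proof is correct and is essentially the paper's argument. The paper also Taylor-expands $f(\alpha+Y)=\sum_j b_jY^j$ and derives $v(b_j)\ge\Delta_v(f)-(j-1)-e_Kv_p(j)$ from $\binom ij=\frac ij\binom{i-1}{j-1}$ and the formula for $\Delta_v(f)$. It then shows directly that a root $y$ of $f(\alpha+Y)/Y$ with $v(y)>1+e_K/(p-1)$ would give $v(y^{-1}f(\alpha+y))=\Delta_v(f)<\infty$, which is the inequality half of your Newton polygon step; the paper cites the literature for the $\Delta_v(f)$ formula and the case $p\nmid d$, which you prove correctly yourself.

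One correction: the middle inequality in $v_p(k)\le\log_p k\le\frac{k-1}{p-1}$ is false for $2\le k\le p-1$ (e.g.\ $\log_3 2>\tfrac12$). Justify $(p-1)v_p(k)\le k-1$ instead via $k\ge p^{v_p(k)}$ and $p^a-1\ge(p-1)a$, as the paper does.
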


\begin{proof} 

For the calculation of $\Delta_v(f)$ and the case $p\nmid d$,
see \cite[Chapter 3, Section 2]{Neu99} and \cite[Lemma~3.9]{LL21}.
The bound for $\kappa_v(f)$ when $p\mid d$ was proved by
Hou and Keating \cite[Lemma~2.2]{HK04} when $Kv$ is finite (cf. \cite[Ch. IV, \S 2, Exercise 3(c)]{Ser79} for the Galois case).
Their proof uses ramification theory.
Here we give an alternative, elementary proof that applies to arbitrary residue fields.

First we note that the identity in the lemma for $\Delta_v(f)$ shows 
\begin{equation}\label{equation different bound}
\Delta_v(f) \le v(ia_i \alpha^{i-1}), \quad 1\le i \le d.
\end{equation}

Using Taylor expansion, we have
$$ 
f(\alpha+Y)=\sum_{j=1}^d b_j Y^j, \ \  b_j= \sum_{i=j}^{d} \binom{i}{j}a_i \alpha^{i-j},
$$ 
where $\binom{\bullet}{\bullet}$ denotes binomial coefficient.
Using $\binom ij=(i/j)\binom{i-1}{j-1}$,
we obtain
\begin{align*}
v\left(\binom{i}{j}a_i \alpha^{i-j} \right) & =  v(i a_i \alpha^{i-1}) +v\left( \binom{i-1}{j-1} \right) -v(j)-(j-1) \\
& \ge \Delta_v(f) -e_Kv_p(j) -(j-1) ,
\end{align*}
where the last inequality follows from \eqref{equation different bound} and the fact that $\binom{i-1}{j-1}$ is an integer.
Thus 
\begin{equation} \label{equation coefficient bound}
v(b_j)\ge   \Delta_v(f)-(j-1)-e_Kv_p(j).
\end{equation}
Note that $v(b_1)=\Delta_v(f)$.

For $j\ge2$,  using  $p^{v_p(j)}-1\ge(p-1)v_p(j)$, we have  
\begin{equation}\label{equation p-valuation of integer}
(p-1)v_p(j)\le j-1  \hbox{, or equivalently, }\  \frac{j-1}{p-1} -v_p(j)\ge 0.
\end{equation}

We are ready to prove $\kappa_v(f) \le 1+ e_K/(p-1)$.
Put $y:=\alpha'-\alpha $ for some root $\alpha'(\neq\alpha)$ of $f$.
Then we have
$
0= y^{-1}f(\alpha+y) = b_1 + \sum_{j=2}^d b_j y^{j-1}.
$ 
If $v(y)>1+e_K/(p-1)$,   combining \eqref{equation coefficient bound} and \eqref{equation p-valuation of integer}, we obtain 
\begin{align*}
v(b_jy^{j-1} ) &=  v(b_j)+(j-1)v(y) \\
  & \ge \Delta_v(f) +(j-1)(v(y)-1) -e_K v_p(j)  \\
  & > \Delta_v(f) + e_K \left(   \frac{j-1}{p-1} -v_p(j)  \right) \\
  & \ge \Delta_v(f)
\end{align*}
for $j\ge2$.
Since $v(b_1)=\Delta_v(f)$,  we conclude that  $ v(0)=v\left( y^{-1}f(\alpha+y)\right) = \Delta_v(f) <\infty  $, which gives a contradiction.  
 \end{proof}

\subsection{Coarsening of valuation}\label{subsec:coarsening}
We review several facts on the coarsening of valuation. 
Let $(K,v)$ be a finitely ramified henselian valued field, let $\dot v$ be the finest equicharacteristic $0$ coarsening of $v$, and let $v^{\circ}$ be the valuation on the residue field $K\dot v$. 
So, for the smallest nontrivial convex subgroup $(v K)^{\circ}$ of $v K$, 
$$\dot vK= v K/(v K)^{\circ},\ v^{\circ} (K\dot v)= (v K)^{\circ}.$$ 
Let $\res_n^{\dot v}:\mathcal O_{\dot v}\rightarrow \mathcal O_{\dot v,n}$ and $\res_n^{v^{\circ}}:\mathcal O_{v^{\circ}}\rightarrow \mathcal O_{v^{\circ},n}$ be the residue maps. 

\begin{fact}[cf. {\cite[Remark 2.17]{LL21}}]\label{fact:coarsening and higher residue ring}
There is a canonical isomorphism $\theta_n:\mathcal O_{v,n}\rightarrow \mathcal O_{v^{\circ},n},x\mapsto x^{\circ}$ such that $\res^{v^{\circ}}_n\circ(\res^{\dot v}_1\restriction_{\mathcal O_v})=\theta_n\circ \res_n:\mathcal O_v\rightarrow \mathcal O_{v^{\circ},n}$.
\end{fact}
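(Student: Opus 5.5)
The plan is to show that the composite map
$$
\psi_n:=\res^{v^{\circ}}_n\circ(\res^{\dot v}_1\restriction_{\mathcal O_v}):\mathcal O_v\longrightarrow \mathcal O_{v^{\circ},n}
$$
is surjective with kernel exactly $\mathfrak m_v^n$. Then $\theta_n$ is simply the induced isomorphism $\mathcal O_v/\mathfrak m_v^n\to\mathcal O_{v^{\circ},n}$. With this definition the identity $\psi_n=\theta_n\circ\res_n$ holds by construction, and canonicity is automatic because no choices are involved.

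First I would record the valuation-theoretic facts. Let $\epsilon$ be the least positive element of $vK$ and pick $\pi\in K$ with $v(\pi)=\epsilon$. Since $vK$ has a least positive element, $\mathfrak m_v=\pi\mathcal O_v$, and hence $\mathfrak m_v^n=\pi^n\mathcal O_v=\{x: v(x)\ge n\epsilon\}$. The subgroup $(vK)^{\circ}$ is the convex hull of $\mathbb Z\epsilon$, and it contains $v(p)=e\epsilon$.

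Next I would use the standard description of the composite valuation $v$ as $\dot v$ followed by $v^{\circ}$:
\begin{itemize}
\item $\mathcal O_v\subseteq\mathcal O_{\dot v}$;
\item $\mathfrak m_{\dot v}=\{x: v(x)>(vK)^{\circ}\}\subseteq\mathfrak m_v$;
\item $\res^{\dot v}_1(\mathcal O_v)=\mathcal O_{v^{\circ}}$;
\item $v^{\circ}(\res^{\dot v}_1(x))=v(x)$ for $x$ with $v(x)\in(vK)^{\circ}$.
\end{itemize}
Surjectivity of $\psi_n$ follows immediately from these facts.

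For the kernel, I would argue as follows. Since $v^{\circ}(\bar\pi)=\epsilon$ is the least positive element of $(vK)^{\circ}=v^{\circ}(K\dot v)$, we get $\mathfrak m_{v^{\circ}}^n=\bar\pi^n\mathcal O_{v^{\circ}}$. Therefore
$$
\ker\psi_n=\pi^n\mathcal O_v+\mathfrak m_{\dot v}.
$$
Because $(vK)^{\circ}$ is convex and contains $n\epsilon$, every element of $\mathfrak m_{\dot v}$ has $v$-value above $n\epsilon$. Hence $\mathfrak m_{\dot v}\subseteq\pi^n\mathcal O_v$, and so $\ker\psi_n=\mathfrak m_v^n$.

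The only point needing care is this inclusion $\mathfrak m_{\dot v}\subseteq\mathfrak m_v^n$, which relies on the convexity of $(vK)^{\circ}$. I would also check that $\res^m_n$ is compatible with the $\theta$'s; this is immediate from the definitions.
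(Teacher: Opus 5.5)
Your proof is correct. The composite is surjective, its kernel is $\pi^n\mathcal O_v+\mathfrak m_{\dot v}$, and the convexity of $(vK)^{\circ}$ gives $\mathfrak m_{\dot v}\subseteq\mathfrak m_v^n$, so $\theta_n$ is a well-defined, choice-free isomorphism. The paper itself gives no argument here beyond the citation to \cite[Remark 2.17]{LL21}. Your argument, which identifies $\mathcal O_{v^{\circ}}$ with $\mathcal O_v/\mathfrak m_{\dot v}$ and $\mathfrak m_{v^{\circ}}^n$ with the image of $\mathfrak m_v^n$, is the standard direct verification of this fact.
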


\begin{remark}\label{rem:coarsening and section}
Let $(K,v)$ be a finitely ramified henselian valued field, which is $\aleph_1$-saturated and let $\dot v$ be the finest coarsening of $v$ so that the residue field $(K\dot v, v^{\circ})$ is complete and of characteristic $0$. Then, there is an $\mathcal L_{ring}$-embedding $K\dot v\hookrightarrow \mathcal O_{\dot v}$ which is a section of the residue map $\res_{\dot v}$ (cf. \cite[Lemma~2.3]{AF16}). Note that this section map sends $(K\dot v,v^{\circ})$ to a valued subfield of $(K,v)$ because the valuation for a finitely ramified henselian valued field is $\mathcal L_{ring}$-definable (see Fact \ref{fact:definability of valuation ring}). As a valued subfield via this section map, $Kv=K\dot vv^{\circ}$ and $v^{\circ}K\dot v=(vK)^{\circ}$ so that $Kv/K\dot vv^{\circ}$ is separable and $vK/v^{\circ}K\dot v$ is torsion-free. Furthermore, this section map induces an inverse of the canonical isomorphism $\theta_n$ in Fact \ref{fact:coarsening and higher residue ring} for each $n\ge 1$.
\end{remark}

\subsection{Definability and interpretability on the residue ring sort}\label{subsec:definability on the residue ring}
We review several issues of definability and interpretability on the residue ring. And we introduce the notion of special formulas (Definition \ref{def:special formula}) to describe $\mathcal L_{val}$-formulas in a tuple of variables on the residue ring sort (cf. Theorem \ref{thm:formulas on the higher residue ring} and Proposition \ref{prop:equivlance between formula and special formula}).

\begin{fact}[cf. {\cite[Remark 2.2]{ADJ24}}]\label{fact:definability of valuation ring}
For a finitely ramified henselian valued field, the valuation ring and its maximal ideal are definable by parameter-free existential $\mathcal L_{ring}$-formulas. 
Indeed, given an initial ramification index $e$, take a positive integer $n>e$ coprime to $p$. 
Then, the valuation ring is defined by the formula $$\phi_{p,e}(x)\equiv \exists y(y^n=1+px^n),$$ and the maximal ideal is defined by the formula $$\psi_{p,e}(x)\equiv \exists y(y^n=1+\frac{1}{p}x^n).$$
\end{fact}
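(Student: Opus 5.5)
The plan is to verify each equivalence directly from two ingredients. The first is Hensel's lemma for the polynomial $Y^n-c$ with $p\nmid n$. The second is a divisibility obstruction in the value group, coming from the fact that $vK$ has a least positive element $\epsilon$ with $v(p)=e\epsilon$ and $n>e$. Both formulas are existential and parameter-free, once $\frac1p x^n$ is read as $\exists z\,(pz=1\wedge y^n=1+zx^n)$.

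\emph{Valuation ring.} Suppose $v(x)\ge 0$. Then $c:=1+px^n$ satisfies $v(c-1)\ge v(p)=e\epsilon>0$. The polynomial $g(Y)=Y^n-c$ has $v(g(1))>0$, and $g'(1)=n$ is a unit because $p\nmid n$. So $\bar g$ has the simple root $1$, and henselianity yields $y\in\mathcal O_v$ with $y^n=c$.

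Conversely, suppose $v(x)<0$, so $v(x)\le -\epsilon$. Then
$$v(px^n)=e\epsilon+nv(x)\le (e-n)\epsilon<0,$$
hence $v(1+px^n)=nv(x)+e\epsilon$. If $y^n=1+px^n$, then $nv(y)=nv(x)+e\epsilon$, so $e\epsilon=n\gamma$ with $\gamma=v(y)-v(x)$. Since $e\epsilon>0$, we get $\gamma>0$, hence $\gamma\ge\epsilon$. This gives $n\gamma\ge n\epsilon>e\epsilon$, a contradiction.

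\emph{Maximal ideal.} Suppose $v(x)\ge\epsilon$. Then $v(x^n/p)\ge (n-e)\epsilon>0$, and the same Hensel argument produces $y$.

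Conversely, suppose $v(x)\le 0$. Then $v(x^n/p)=nv(x)-e\epsilon<0$, so any solution gives $nv(y)=nv(x)-e\epsilon$. Thus $e\epsilon=n\gamma$ with $\gamma=v(x)-v(y)$, which is again impossible by the argument above.

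The only point needing care is this divisibility step. It is exactly where finite ramification, through the existence of $\epsilon$, and the choice $n>e$ are used. The rest is a routine application of Hensel's lemma.
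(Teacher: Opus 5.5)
Your argument is correct. Hensel's lemma for $Y^n-c$, with $c\in 1+\mathfrak m_v$ and $p\nmid n$, gives the inclusions of $\mathcal O_v$ and of $\mathfrak m_v$ in the sets defined by the formulas. The value-group obstruction $n\gamma=e\epsilon$, with $\gamma\ge\epsilon$ and $n>e$, rules out solutions for $x$ outside $\mathcal O_v$ (resp.\ $\mathfrak m_v$). The paper itself states this as a Fact without proof, citing \cite[Remark 2.2]{ADJ24}, and your proof is the standard Hensel-plus-divisibility argument behind that remark.
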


\begin{remark}\label{rem:interpretability of residue field}
Let $(K,v)$ be a finitely ramified valued field. 
Then, $\mathcal O_{v,n}$ interprets $Kv$ in the ring language $\mathcal L_{ring}$.
\end{remark}
\begin{proof}
Note that the maximal ideal of $\mathcal O_{v,n}$ is defined by the formula $x^n=0$. 
So, for an equivalence relation $E(x,y)\equiv (x-y)^n=0$, the underlying set of $Kv$ is given by $\mathcal O_{v,n}/E$. 
Also, it is not hard to see that given $a_1,a_2,b_1,b_2\in \mathcal O_{v,n}$ with $E(a_1,a_2)$ and $E(b_1,b_2)$, $E(a_1+b_1,a_2+b_2)$ and $E(a_1b_1,a_2b_2)$ hold, and so the $\mathcal L_{ring}$-structure on $Kv$ is interpreted in the $\mathcal L_{ring}$-structure $\mathcal O_{v,n}$.
\end{proof}

\begin{remark}\label{rem:separability of residue field via higer residue ring}
Let $(K,v)\subseteq (L,w)$ be valued fields of mixed characteristic $(0,p)$ and let $n\ge 1$.
Suppose $\mathcal O_{v,n}\preceq_{\exists}\mathcal O_{w,n}$ in $\mathcal L_{ring}$. 
Then, $Lw/Kv$ is separable.
\end{remark}
\begin{proof}
To show that $Lw/Kv$ is separable, that is, $Kv$ and $(Lw)^{(p)}$ are linearly disjoint over $(Kv)^{(p)}$, take $a_1,\ldots,a_k\in \mathcal O_v^{\times}$ such that $\res_1(a_1),\ldots,\res_1(a_k)$ are linearly dependent over $(Lw)^{(p)}$ and we will show that $\res_1(a_1),\ldots,\res_1(a_k)$ are linearly dependent over $(Kv)^{(p)}$.

Since $\res_1(a_1),\ldots,\res_1(a_k)$ are linearly dependent over $(Lw)^{(p)}$, there are $b_1,\ldots,b_k\in \mathcal O_w$ such that 
$$Lw\models \res_1(a_1)\res_1(b_1)^p+\cdots+\res_1(a_k)\res_1(b_k)^p=0.$$ 
Without loss of generality, we may assume that all of $b_i$'s are in $\mathcal O_w^{\times}$ and so $\res_1(b_i)\neq 0$. 
Recall that the maximal ideals of $\mathcal O_{v,n}$ and $\mathcal O_{w,n}$ are defined by the formula $x^n=0$. 
So we have
\begin{align*}
&Lw\models \sum_{i\le k}\res_1(a_i)\res_1(b_i)^p=0\\
 \Longleftrightarrow \ &\mathcal O_{w,n}\models (\sum_{i\le k}\res_n(a_i)\res_n(b_i)^p)^n=0\\
 \Longrightarrow \ &\mathcal O_{w,n}\models \exists \beta_1,\ldots,\beta_k\left(\bigwedge_{i\le k}\beta_i^n\neq 0\wedge (\sum_{i\le k}\res_n(a_i)\beta_i^p)^n=0\right).
\end{align*}
Since $\mathcal O_{v,n}\preceq_{\exists}\mathcal O_{w,n}$, there are $\beta_1',\ldots,\beta_k'\in \mathcal O_{v,n}$ such that
$$\mathcal O_{v,n}\models \bigwedge_{i\le k}(\beta_i')^n\neq 0\wedge (\sum_{i\le k}\res_n(a_i)(\beta_i')^p)^n=0,$$
which implies that there are $b_1',\ldots,b_k'\in \mathcal O_v^{\times}$ such that
$$\mathcal O_{v,n}\models (\sum_{i\le k}\res_n(a_i)\res_n(b_i')^p)^n=0.$$
So we have that $$Kv\models \sum_{i\le k}\res_1(a_i)\res_1(b_i')^p=0,$$
and $\res_1(a_1),\ldots,\res_1(a_k)$ are linearly dependent over $(Kv)^{(p)}$.
\end{proof}


Next, we introduce the notion of special formula in the residue ring sort $\mathcal O_n$. 
\begin{definition}\label{def:special formula}
Let $n\ge k\ge 1$ be integers. 
A {\em special $\mathcal O_k$-formula at level $n$} is a $\mathcal L_{val}$-formula of the form: $$\res^n_k(\theta')(x)\equiv\exists x'\left(\theta'(x')\wedge \res^n_k(x')=x\right),$$ where $x'$ and $x$ are tuples  of variables of the same length on the sorts $\mc O_n$ and $\mc O_k$, respectively, and  $\theta'(x')$ is a $\mathcal L_{ring}$-formula.
It is called an  \emph{$\exists^+$-special $\mathcal O_k$-formula at   level $n$} if $\theta'$ is a positive existential $\mathcal L_{ring}$-formula.
\end{definition}

\begin{remark}\label{rem:kernel_speical formula}
Let $n\ge k\ge 1$.
\begin{enumerate}
    \item The relation $\res^n_k(x)=\res^n_k(y)$ on the sort $\mc O_n$ is defined by a positive $\mathcal L_{ring}$-formula $$\exists s\exists t(s^n=0\wedge x-y=s^k t).$$
    \item For a $\mathcal L_{ring}$-formula $\theta'(x')$ with a tuple $x'$ of variables on the sort $\mc O_n$, let $$\hat \theta'(x')\equiv \exists y'\left(\theta'(y')\wedge \res^n_k(x')=\res^n_k(y')\right),$$ which is also a positive formula if $\theta'$ is a positive formula. Then two special $\mc O_k$-formulas $\res^n_k(\theta')(x)$ and $\res^n_k(\hat \theta')(x)$ are equivalent. 
\end{enumerate}
\end{remark}

\begin{remark}\label{rem:special formula under boolean combination}
Let $n\ge k\ge1$ be integers. 
Any finite Boolean combination of special $\mathcal O_k$-formulas at level $n$ is equivalent to a special $\mathcal O_k$-formula at level $n$. 
Moreover, any finite conjunction or disjunction of $\exists^+$-special $\mathcal O_k$-formulas at level $n$ is equivalent to an $\exists^+$-special $\mathcal O_k$-formula at level $n$.
\end{remark}

\begin{proof}
Take $\mathcal L_{ {ring}}$-formulas $\theta_0'(x')$ and $\theta_1'(x')$, where $x'$ is a tuple of variables on the sort $\mathcal O_n$. By Remark \ref{rem:kernel_speical formula}(ii), we may assume that for $i=0,1$,  $$\theta'_i(x')\models\forall y'\left(\res^n_k(y')=\res^n_k(x')\rightarrow \theta'_i(y')\right).$$
Then, formulas $\res^n_k(\theta'_0)(x)\vee \res^n_k(\theta'_1)(x)$, $\res^n_k(\theta'_0)(x)\wedge \res^n_k(\theta'_1)(x)$, and $\neg\res^n_k(\theta'_0)(x)$ are equivalent to special $\mathcal O_k$-formulas at level $n$, $\res^n_k(\theta'_0\vee \theta'_1)(x)$, $\res^n_k(\theta'_0\wedge \theta'_1)(x)$, and $\res^n_k(\neg\theta'_0)(x)$ respectively, which comes from the fact that for a surjective map $f:X\rightarrow Y$ and for any subsets $U, V$ of $Y$,
\begin{itemize}
    \item $f^{-1}(U\cap V)=f^{-1}(U)\cap f^{-1}(V)$,
    \item $f^{-1}(U\cup V)=f^{-1}(U)\cup f^{-1}(V)$, 
    \item $f^{-1}(Y\setminus U)=X\setminus f^{-1}(U)$, and
    \item $f[f^{-1}(U)]=U$.
\end{itemize}
\end{proof}

\section{Approximate lifting and lifting numbers}\label{sec:embedding lemma}

\subsection{Approximate lifting}

\begin{notation}
Let $(F,v_0)\subseteq(K,v)$ and $(L,w)$ be complete discretely valued fields.
Let $\varphi:\mathcal O_{v,m}\rightarrow \mathcal O_{w,m}$ be a homomorphism and let $\Phi:K\rightarrow L$ be an isometric embedding.
For $m  \ge n\ge 1$, we write $\res^m_n(\varphi):\mathcal O_{v,n}\rightarrow \mathcal O_{w,n}$ and $\res_n(\Phi):\mathcal O_{v,n}\rightarrow \mathcal O_{w,n}$ for the natural maps induced by $\varphi$ and $\Phi$, respectively. 

  Let $\Psi:F\rightarrow L$ be an isometric embedding. 
We say that $\Psi$ is {\em compatible with $\varphi$} if  
$$\varphi(\res_m(x))=\res_m(\Psi(x)), \quad  x\in \mathcal O_{v_0}. $$
For $f(X)=\sum_i a_iX^i\in F[X]$,  put $\Psi(f)(X):=\sum_i \Psi(a_i)X^i\in L[X] $.

\end{notation}

 We first prove a relative lifting result with a fixed coefficient embedding.
 The following proposition strengthens   \cite[Theorem~3.7]{LL21}.
  
\begin{proposition}  \label{prop: general lifting}
Let $(F,v_0)\subseteq(K,v)$ and $(L,w)$ be complete discretely valued fields of mixed characteristic $(0,p)$. 
Suppose that $(F,v_0)\subseteq(K,v)$ is totally ramified of degree $d>1$, and that $v$ and $w$ are normalized at their uniformizers.
Let $n\geq1$, let $\varphi:\mathcal{O}_{v,n}\to\mathcal{O}_{w,n}$ be a homomorphism, and let $\Psi:F\to L$ be an isometric embedding compatible with $\varphi$.
 Let $f(X)\in F[X]$ be an Eisenstein polynomial  such that $K=F(\pi)$ for a root $\pi$ of $f$.

 \begin{enumerate}
     \item 
 Suppose $ n>\Delta_w(\Psi(f))+\kappa_w(\Psi(f)) $.
 Then there is a unique isometric embedding $\Phi:K\to L$ extending $\Psi$
and satisfying
$$
\mr{res}_k(\Phi) =  \mr{res}^n_k(\varphi), \ \ 1\leq k\leq n-\Delta_w(\Psi(f)).
$$
For any $\beta\in\mathcal O_w$ representing $\varphi(\operatorname{res}_n(\pi))$, the element $\Phi(\pi)$ is the unique
root $\alpha$ of $\Psi(f)$ satisfying
$$
w(\alpha-\beta)>\kappa_w(\Psi(f)).
$$

     \item 
     Suppose
$$
n>
\begin{cases}
\dfrac{e_L}{e_K}d, & \hbox{when }\  p\nmid d,\\ e_L\left(\dfrac{d}{e_K}+v_p(d)+\dfrac{1}{p-1}\right),& \hbox{when }\  p\mid d.
\end{cases}
$$
 Then the conclusion of\/ {\rm (i)} holds. 
 In particular,
$$
\mr{res}_k(\Phi)=\operatorname{res}^{n}_{k}(\varphi),   \ \ 1\leq k\leq n-e_L\left(\frac{d-1}{e_K}+v_p(d)\right).
$$    
 \end{enumerate}
 
\end{proposition}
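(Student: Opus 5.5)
Part (i) is an application of Lemma~\ref{lem:modified Krasner}(ii) to the polynomial $g:=\Psi(f)\in \Psi(F)[X]$, viewed over the henselian field $L$, followed by a check of how the resulting embedding acts on residue rings. Let $\beta\in\mathcal O_w$ represent $\varphi(\res_n(\pi))$. Since $f(\pi)=0$ and $\Psi$ is compatible with $\varphi$, applying $\varphi$ to $\res_n(f(\pi))=0$ gives
$$\res_n(g(\beta))=\sum_i\varphi(\res_n(a_i))\varphi(\res_n(\pi))^i=0.$$
Hence $w(g(\beta))\ge n>\Delta_w(g)+\kappa_w(g)$. The polynomial $g$ is Eisenstein over $\Psi(F)$ (isometry preserves the Eisenstein conditions), so it is irreducible and separable. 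Lemma~\ref{lem:modified Krasner}(ii) therefore gives a unique root $\alpha\in L$ with
$$w(\beta-\alpha)=w(g(\beta))-\Delta_w(g)\ge n-\Delta_w(g)>\kappa_w(g).$$
Define $\Phi:K=F(\pi)\to L$ by $\Phi|_F=\Psi$ and $\pi\mapsto\alpha$. This is a field embedding. It is isometric because $\Psi(F)(\alpha)$ is complete, so its valuation is the unique extension, and $\alpha$ is a uniformizer of it with ramification matching $v(\pi)$.

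Next we check agreement on residue rings. By Fact~\ref{fact:totally ramified and Eisenstein}(i), every $x\in\mathcal O_v$ can be written $x=\sum_{i<d}c_i\pi^i$ with $c_i\in\mathcal O_{v_0}$. Then
$$\Phi(x)-\sum\Psi(c_i)\beta^i=\sum\Psi(c_i)(\alpha^i-\beta^i),$$
whose valuation is at least $w(\alpha-\beta)\ge n-\Delta_w(g)$. On the other hand, $\res_n(\sum\Psi(c_i)\beta^i)=\varphi(\res_n(x))$ by compatibility. Hence $\res_k(\Phi)=\res^n_k(\varphi)$ for $k\le n-\Delta_w(g)$.

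For uniqueness, suppose $\Phi'$ is another isometric extension of $\Psi$ with $\res_k(\Phi')=\res^n_k(\varphi)$ for these $k$. Then $\Phi'(\pi)$ is a root of $g$. Taking $k=n-\Delta_w(g)$ gives $w(\Phi'(\pi)-\beta)\ge n-\Delta_w(g)>\kappa_w(g)$, so uniqueness in Lemma~\ref{lem:modified Krasner} (the ultrametric argument) forces $\Phi'(\pi)=\alpha$. The same argument shows that the characterization of $\Phi(\pi)$ does not depend on the choice of representative $\beta$.

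Part (ii) reduces to (i) once we bound $\Delta_w(g)$ and $\kappa_w(g)$, using Lemma~\ref{lemma:different and krasner bound}. That lemma works with the valuation normalized by $v(\alpha)=1$ on $\Psi(F)(\alpha)\cong K$, where $v(p)=e_K$. The step needing care is converting to $w$, which is normalized at a uniformizer of $L$. Since $\Phi$ is isometric, $w=(e_L/e_K)\,v$ on $\Phi(K)$, because $w(p)=e_L$. For $p\nmid d$ the lemma gives $\Delta=d-1$ and $\kappa=1$ in the $v$-normalization, so
$$\Delta_w+\kappa_w=\frac{e_L}{e_K}\,d.$$
For $p\mid d$ it gives
$$\Delta_w\le \frac{e_L}{e_K}\bigl(d-1+e_Kv_p(d)\bigr),\qquad \kappa_w\le\frac{e_L}{e_K}\Bigl(1+\frac{e_K}{p-1}\Bigr),$$
whose sum is $e_L\bigl(d/e_K+v_p(d)+1/(p-1)\bigr)$. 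The hypothesis of (ii) therefore implies that of (i).

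The quantities $\Delta_w(g)$ and $\kappa_w(g)$ are intrinsic to $g$ and the extension of $w$ to $L^{sep}$, so computing them with $\alpha$ as the reference root is legitimate. The only subtlety is the scaling by $e_L/e_K$, which holds because $w$ restricted to $\Psi(F)$ equals $(e_L/e_K)v_0$. Finally, the $\Delta_w$ bound gives the stated range
$$k\le n-e_L\Bigl(\frac{d-1}{e_K}+v_p(d)\Bigr)$$
in the $p\mid d$ case. For $p\nmid d$ the exact value $\Delta_w=(e_L/e_K)(d-1)$ gives a range that is at least as large, since $v_p(d)=0$.
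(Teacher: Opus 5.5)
Your proposal is correct and follows essentially the same route as the paper: Lemma~\ref{lem:modified Krasner}(ii) produces a root $\alpha$ of $\Psi(f)$ close to $\beta$, the divisibility $(\alpha-\beta)\mid(\alpha^i-\beta^i)$ on the basis $\mathcal O_v=\bigoplus_{i<d}\mathcal O_{v_0}\pi^i$ gives agreement on residue rings, uniqueness comes from the ultrametric argument, and (ii) follows from Lemma~\ref{lemma:different and krasner bound} rescaled by $e_L/e_K$. Your isometry check, via uniqueness of extensions of valuations from the complete field $\Psi(F)$, is a harmless variant of the paper's direct computation $w(\Phi(x))=(e_L/e_K)v(x)$.

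One wording fix is needed in (ii). There you should not use the root $\alpha\in L$ or the embedding $\Phi$, because these exist only after the hypothesis of (i) has been verified, which is exactly what (ii) is trying to establish. Instead, apply Lemma~\ref{lemma:different and krasner bound} to an arbitrary root of $\Psi(f)$ in $L^{sep}$. Its $w$-value is $e_L/e_K$, since $w|_{\Psi(F)}=(e_L/e_K)v_0$ and $\Psi(f)$ is Eisenstein. Your final paragraph already points to this justification.
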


\begin{proof}
 
Suppose $ n>\Delta_w(\Psi(f))+\kappa_w(\Psi(f))$.
Let $\beta\in\mathcal O_w$ be a representative of $\varphi(\operatorname{res}_n(\pi))$.
By the compatibility of $\Psi$ with $\varphi$, we have $ \operatorname{res}_n(\Psi(f)(\beta)) = \varphi(\operatorname{res}_n(f(\pi)))=0 $, and hence
$$
\Psi(f)(\beta)\in\mathfrak m_w^n.
$$
 By Lemma~\ref{lem:modified Krasner}, there is   a unique root $\pi'$ of $\Psi(f)$ in $ L$ such that
\begin{equation} \label{equation beta value bound}
w(\beta-\pi')
=w(\Psi(f)(\beta))-\Delta_w(\Psi(f)) \ge n-\Delta_w(\Psi(f)) > \kappa_w(\Psi(f)).
\end{equation} 
Define $\Phi:K\rightarrow L$ by extending $\Psi$ with $\pi\mapsto\pi'$.
Noting $w(p)= (e_L/e_K) v(p)$, we have $w(\Psi(a))=(e_L/e_K) v(a)$ for $a\in F$ and  $w(\pi')=e_L/e_K$.
We conclude that $\Phi$ is an isometric embedding with 
$$w(\Phi(x))=(e_L/e_K) v(x)$$
for all  $x\in K$.

Since $\beta-\pi'$ divides  $\beta^i-(\pi')^i$, it follows from \eqref{equation beta value bound} and the compatibility of  $\Psi$ with $ \varphi $   that 
\begin{equation} \label{equation restriction bound}
\mr{res}_k(\Phi)=\mr{res}^{n}_{k}(\varphi),  \ \  1\leq k\leq n-\Delta_w(\Psi(f)).
\end{equation}
Since $n-\Delta_w(\Psi(f)) > \kappa_w(\Psi(f))$, for  $\beta\in\mathcal O_w$ representing $\varphi(\operatorname{res}_n(\pi))$, the element $\Phi(\pi)=\pi'$ is the unique root $\alpha$ of $\Psi(f)$ satisfying
$ w(\alpha-\beta)>\kappa_w(\Psi(f)) $ by Lemma~\ref{lem:modified Krasner}.

To prove the uniqueness of $\Phi$, let $\Phi_1:K\to L$ be an isometric embedding extending $\Psi$ and satisfying \eqref{equation restriction bound}.
If $\beta$ represents $\varphi(\operatorname{res}_n(\pi))$, we have
$$
w(\Phi_1(\pi)-\beta)\ge   n-\Delta_w(\Psi(f)) >\kappa_w(\Psi(f)).
$$
Moreover, $\Phi_1(\pi)$ is a root of $\Psi(f)$. 
Hence the uniqueness assertion in Lemma~\ref{lem:modified Krasner} gives
$$
\Phi_1(\pi)=\pi'.
$$
It follows that $\Phi_1=\Phi$,  which finishes the proof of  (i).

The uniform bounds  in (ii) follow directly from  
$$
\Delta_w(\Psi(f)) \le  e_L\left(\frac{d-1}{e_K}+v_p(d)\right) \ \hbox{ and }  \ \kappa_w(\Psi(f))\le
\begin{cases}
e_L/e_K,&p\nmid d,\\
e_L/e_K+e_L/(p-1),&p\mid d,
\end{cases}
$$
by  Lemma~\ref{lemma:different and krasner bound}.
\end{proof}

In the following  theorem, we prove  the existence of approximating lifting.

\begin{theorem} \label{thm: approximate lifting}
Let $(K,v)$ and $(L,w)$ be complete $\Z$-valued fields of mixed
characteristic $(0,p)$ with $e_K=e_L=e$. Let
$\varphi:\mathcal O_{v,n}\to\mathcal O_{w,n}$ be a ring homomorphism.
Assume
$$
n \ge
\begin{cases}
1, & \hbox{ when }\  e=1,\\ 
e+1 ,& \hbox{ when }\  e>1 \hbox{  and } p\nmid e, \\
e(1+v_p(e)) +\left\lfloor\dfrac{e}{p-1}\right\rfloor+1  , & \hbox{ when }\  e>1 \hbox{ and } p\mid e,
\end{cases}
$$
where $\left\lfloor \cdot \right\rfloor$ denotes the greatest integer function.
 Then there is an isometric embedding $\Phi:K\to L$ such that
\begin{equation*} 
\res_k(\Phi)=\res_k^n(\varphi)
\ \
\text{ for }\ 1\le k\le n-(e-1+e v_p(e)).
\end{equation*}
If $\varphi$ is an isomorphism, $\Phi$ is an isomorphism.
 
\end{theorem}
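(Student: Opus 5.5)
The plan is to reduce to Proposition~\ref{prop: general lifting}, taking as base field the fraction field of a Cohen subring of $\mathcal O_v$.

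First, Corollary~\ref{corollary: Cohen subring existence} gives a Cohen subring $C\subset\mathcal O_v$ with $C/pC\cong Kv$ and $\mathcal O_v=\bigoplus_{i<e}C\pi^i$ for a uniformizer $\pi$. Put $F=\operatorname{Frac}(C)$ with $v_0=v|_F$. Then $(F,v_0)\subseteq(K,v)$ is totally ramified of degree $e$, so $K=F(\pi)$ with $\pi$ a root of an Eisenstein polynomial $f\in C[X]$ of degree $e$ (Fact~\ref{fact:totally ramified and Eisenstein}).

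Next I would lift $\varphi$ on $C$ using Lemma~\ref{lemma lifting p-basis}, with
\[
R_0=\mathbb Z_p,\quad R=C,\quad A=\mathcal O_L,\quad m=n.
\]
The map is $\phi_n:C\to\mathcal O_v\to\mathcal O_{v,n}\xrightarrow{\varphi}\mathcal O_{w,n}=\mathcal O_L/\mathfrak m_w^n$. The hypotheses hold:
\begin{itemize}
\item $\mathbb Z_p$ and $C$ are DVRs with $p\,C=\mathfrak m_C$;
\item $Kv/\mathbb F_p$ is separable because $\mathbb F_p$ is perfect;
\item $\mathcal O_L$ is complete local;
\item $\phi_n$ is local, because $\varphi$ sends the nilpotent maximal ideal of $\mathcal O_{v,n}$ into the nilpotents of $\mathcal O_{w,n}$;
\item $\phi_n$ is a $\mathbb Z_p$-algebra map, because every ring homomorphism of $\mathbb Z/p^N$-algebras is.
\end{itemize}
The lemma yields $\Psi_0:C\to\mathcal O_L$ lifting $\phi_n$. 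Since $\Psi_0(p)=p$ and $e_K=e_L$, we have $w(\Psi_0(p))=e=v(p)$. The map $\Psi_0$ sends units to units because it is local: an element $x\notin pC$ has image not in $\mathfrak m_w$. Hence $\Psi_0$ extends to an isometric embedding $\Psi:F\to L$, and it is compatible with $\varphi$ by construction.

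If $e=1$, then $K=F$ and $\Phi=\Psi$ agrees with $\varphi$ at every level $k\le n$. If $e>1$, apply Proposition~\ref{prop: general lifting}(ii) with $d=e$ and $e_K=e_L=e$. For $p\nmid e$ the hypothesis is $n>e$, i.e.\ $n\ge e+1$. For $p\mid e$ it is $n>e(1+v_p(e))+e/(p-1)$, which for integers $n$ is equivalent to $n\ge e(1+v_p(e))+\lfloor e/(p-1)\rfloor+1$. In both cases the error bound becomes
\[
n-e\left(\tfrac{e-1}{e}+v_p(e)\right)=n-(e-1+ev_p(e)),
\]
which is exactly the stated range; when $p\nmid e$ we have $v_p(e)=0$, so the bound is $n-(e-1)$.

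Finally, suppose $\varphi$ is an isomorphism. The bound $n-(e-1+ev_p(e))$ is at least $1$ under the assumed lower bound on $n$, so $\res_1(\Phi)=\res^n_1(\varphi)$ is surjective onto $Lw$. Also $w(\Phi(\pi))=v(\pi)=1$, so $\Phi(\pi)$ is a uniformizer of $L$. Therefore $\mathcal O_L=\Phi(\mathcal O_K)+\mathfrak m_w$ and $\mathfrak m_w=\Phi(\pi)\mathcal O_L$. Successive approximation, together with completeness of $\Phi(\mathcal O_K)$ (it is the isometric image of a complete ring), then gives $\Phi(\mathcal O_K)=\mathcal O_L$, so $\Phi$ is surjective.

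The step I expect to need the most care is the Cohen lifting. Lemma~\ref{lemma lifting p-basis} must be applied with base $\mathbb Z_p$ rather than some larger ring, so that no separability over a prescribed sub-DVR is needed, and one has to check that compatibility with $\varphi$ holds on all of $\mathcal O_{v_0}=C$ modulo $\mathfrak m_w^n$. Everything else is bookkeeping in Proposition~\ref{prop: general lifting}.
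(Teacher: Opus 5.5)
Your proposal is correct and follows the paper's proof step for step: the Cohen subring $C$ from Corollary~\ref{corollary: Cohen subring existence}, the lift over $R_0=\mathbb Z_p$ via Lemma~\ref{lemma lifting p-basis}, and Proposition~\ref{prop: general lifting}(ii) with $d=e$, with the same conversion of the bounds to integer form. The one divergence is the surjectivity step: the paper observes that $\Psi_0(C)$ is a Cohen subring of $\mathcal O_w$ and compares degrees, $[L:\operatorname{Frac}(\Psi_0(C))]=e=[\Phi(K):\operatorname{Frac}(\Psi_0(C))]$, while your successive-approximation argument (surjectivity on residue fields, $\Phi(\pi)$ a uniformizer, completeness of $K$) is an equally valid and slightly more elementary alternative.
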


\begin{proof}

Choose a Cohen subring $C\subset\mathcal O_v$ as in Corollary~\ref{corollary: Cohen subring existence}, and put  $ F:=\operatorname{Frac}(C) $.
Applying Lemma~\ref{lemma lifting p-basis} with
$
R_0=\mathbb Z_p,\  R=C,\  A=\mathcal O_w,\  m=n
$
to the composite map
$$
C\longrightarrow\mathcal O_v\xrightarrow{\res_n}
\mathcal O_{v,n}\xrightarrow{\varphi}\mathcal O_{w,n},
$$
we obtain an embedding
$
\Psi_0:C\longrightarrow\mathcal O_w
$
compatible with $\varphi$.
Let
$
\Psi:F\longrightarrow L
$
be its extension.

If $e=1$, then $\mathcal O_v=C$ by Corollary~\ref{corollary: Cohen subring existence}, and we take $\Phi=\Psi$. 
Assume that $e>1$.
Let $\pi$ be a uniformizer of $K$,
and let $f\in C[X]$ be its Eisenstein polynomial over $F$. 
Applying Proposition~\ref{prop: general lifting}~(ii) with $ d=e$ and $e_K=e_L=e $, we obtain an isometric embedding
$$
\Phi:K\longrightarrow L
$$
extending $\Psi$ such that $ \mr{res}_k(\Phi) = \mr{res}^n_k(\varphi)  \ \text{ for }\ \ 1\leq k\leq n-(e-1+ev_p(e)) $.

Suppose finally that $\varphi$ is an isomorphism, and put $ C'=\Psi_0(C) $.
Then $C'/pC'\cong Lw$, so Corollary~\ref{corollary: Cohen subring existence} gives $ [L:\mr{Frac}(C')]=e $.
On the other hand, $ [\Phi(K):\operatorname{Frac}(C')] = [K:F] = e $.
Hence  $\Phi$ is an isomorphism.
\end{proof}
\noindent
 In Theorem~\ref{thm: approximate lifting},  $\Phi$ is not necessarily unique (see Remark~\ref{rem:functorial-lifting}).
 By Proposition~\ref{prop: general lifting}, $\Phi$ is unique if its restriction to the Cohen subring $C$ is fixed.

\subsection{Lifting numbers}

\begin{definition}\label{def:lifting-number}
Let $p$ be a prime and let $e\geq 1$. 
The \emph{lifting number} $N_p(e)$ is the smallest positive integer $n$ such that
$$
\mathcal O_{v,n}\cong \mathcal O_{w,n}
\quad\Longrightarrow\quad
(K,v)\cong (L,w)
$$
for all complete $\mathbb Z$-valued fields $(K,v)$ and $(L,w)$ of mixed characteristic $(0,p)$ and initial ramification index $e$.
 \end{definition}
 \noindent
  Note that  we may use complete DVRs in the definition since a  DVR determines its fraction field.

\begin{remark}\label{rem:functorial-lifting}
In \cite[Definition~4.1 and Theorem~4.4]{LL21}, where the residue fields
are assumed to be perfect, the lifting number is defined categorically and
shown to be finite by constructing a lifting functor. 

Such a functorial formulation does not extend to arbitrary residue fields.
For example, let
$$
A=\widehat{\mb Z_p[T]_{(p)}}, \ \ R=A[\pi]=A[X]/(X^e-p).
$$
The residue field of $A$ is $\mathbb F_p(T)$. 
For   $n\geq 1$, the map, defined by
$$
T\longmapsto T+p^n, \ \ \pi\longmapsto \pi,
$$
is an  automorphism $\sigma_n$ of $R$, which satisfies
$
(\sigma_n-\operatorname{id}_R)(R) \subset p^nR \subset \mathfrak m_R^n.
$
Thus the   functor $\overline{\res_n}$, induced by $\res_n$, is not faithful.
However,  the existence of a lifting functor whose composite with $\overline{\res_n}$ is naturally isomorphic to the identity would imply faithfulness. 
Therefore, throughout the paper, the lifting number means the weaker invariant   in Definition~\ref{def:lifting-number}.
\end{remark}

We note that  Theorem~\ref{thm: approximate lifting} says that the  lifting number is always finite.
Throughout the remainder of this subsection, we prove the bounds in Theorem~\ref{thm: approximate lifting} are exactly lifting numbers.
\begin{proposition}\label{prop:tame-pair}
Let $e>1$ and $p\nmid e$. 
There are complete $\Z$-valued fields $(K_1,v_1)$ and $(K_2,v_2)$ of initial ramification index $e$ with finite residue fields such that their residue rings of length $e$ are isomorphic, but $X^e+p$ has a root in $K_1$ and none in $K_2$.
In particular their valuation rings are not isomorphic.
\end{proposition}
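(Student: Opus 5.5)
Take $K_1=\mathbb Q_p(\pi_1)$ with $\pi_1^e=-p$, and $K_2=\mathbb Q_p(\pi_2)$ with $\pi_2^e=-pu$, where $u\in\mathbb Z_p^\times$ is a unit that is not an $e$-th power modulo $p$. Such a $u$ exists whenever $\gcd(e,p-1)>1$. Otherwise we pass to an unramified base $\mathbb Q_{p^f}$ with $\gcd(e,p^f-1)>1$; for instance, take $f$ to be the order of $p$ modulo $e$, so that $e\mid p^f-1$ (this uses $p\nmid e$). Then we work over $W(\mathbb F_{p^f})$ throughout, with $K_i=\mathbb Q_{p^f}(\pi_i)$. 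Both polynomials are Eisenstein of degree $e$, so both fields are complete, totally ramified of degree $e$ with residue field $\mathbb F_{p^f}$, and hence have initial ramification index $e$.

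\emph{Isomorphism of residue rings of length $e$.} Put $W=W(\mathbb F_{p^f})$. By Fact~\ref{fact:totally ramified and Eisenstein}, $\mathcal O_{K_i}=W[X]/(X^e+pu_i)$ with $u_1=1$ and $u_2=u$, and $\mathfrak m^e=p\mathcal O$. Therefore
$$\mathcal O_{K_i,e}=\mathcal O_{K_i}/p\mathcal O_{K_i}\cong W/pW[X]/(X^e+pu_i)=\mathbb F_{p^f}[X]/(X^e).$$
This ring does not depend on $i$, so the two length-$e$ residue rings are isomorphic.

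\emph{Root of $X^e+p$.} In $K_1$, $\pi_1$ is a root of $X^e+p$. Suppose instead that $\alpha\in K_2$ satisfies $\alpha^e=-p$. Then $v_2(\alpha)=1$, so $\alpha=\pi_2 w$ for some unit $w\in\mathcal O_{K_2}^\times$. This gives $-p=\alpha^e=-puw^e$, hence $uw^e=1$. Reducing modulo $\mathfrak m$ (the residue field is $\mathbb F_{p^f}$) yields $\bar u=\bar w^{-e}$, so $\bar u$ is an $e$-th power in $\mathbb F_{p^f}$, contradicting the choice of $u$.

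\emph{Choice of $u$.} Since $\mathbb F_{p^f}^\times$ is cyclic of order $p^f-1$ and $\gcd(e,p^f-1)>1$, the $e$-th power map on it is not surjective. We take $\bar u$ to be any non-$e$-th power and lift it to $u\in W^\times$ by Teichmüller. The only real point requiring care is to make this choice over the same residue field $\mathbb F_{p^f}$ that both fields share.

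\emph{Conclusion.} An isomorphism of valuation rings would induce an isomorphism of fraction fields carrying the root $\pi_1$ of $X^e+p$ in $K_1$ to a root of $X^e+p$ in $K_2$, which does not exist. Hence the valuation rings are not isomorphic.
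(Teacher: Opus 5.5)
Your proof is correct and uses the same construction as the paper: an unramified base of degree $h$ with $e\mid p^h-1$, $K_1=F(\sqrt[e]{-p})$, and $K_2=F(\sqrt[e]{-pu})$ with $\bar u$ not an $e$-th power in the residue field. The paper defers the verification to the theory of tamely ramified extensions (citing \cite{LL21}); you instead check directly that both length-$e$ residue rings are $\mathbb F_{p^f}[X]/(X^e)$, and that a root of $X^e+p$ in $K_2$ would force $\bar u$ to be an $e$-th power.
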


\begin{proof}
Choose $h\ge 1$ with $e\mid p^h-1$.
Let $F/\Q_p$ be the unramified extension of degree $h$, with residue field $k$.
Choose $u\in\mathcal O_F^\times$ such that $\bar{u}\notin (k^\times)^e$. 
Put  $K_1=F(\sqrt[e]{-p})$ and  $K_2=F(\sqrt[e]{-pu})$.
Then $K_1$ and $K_2$ satisfy the desired assertion by the theory of tamely ramified extensions.
Cf. \cite[Proposition~4.5 and Lemma~4.6]{LL21}.
\end{proof}

For the wild case $p \mid e$,  we start with the following elementary lemma.

\begin{lemma}\label{lem: wild case lemma}
  \begin{enumerate}
      \item 
 Let $(K, v) $ be a discretely valued field with  initial ramification index $e=v(p)$.
    Suppose $p-1\nmid e$.
    For $x\in K$ with  $t=v(x-1)>0$,  we have
 $$
v(x^p-1)=
\begin{cases}
pt, & \hbox{ if }\  t\le e/(p-1),\\ 
e+t ,& \hbox{ if }\  t\ge e/(p-1). \\
\end{cases}
 $$
      \item 
     For $r>0$, the splitting field of  $f(X):=X^{p^r} +p\in \Q_p[X]$  is totally ramified over $\Q_p$.
     Furthermore, if $e=p^rm$ with $p\nmid m$ and $(p-1) \mid e$, then $\Q_p(\sqrt[e]{-p}, \mu_{p^r})/\Q_p$ is totally ramified. 
  \end{enumerate}
    
\end{lemma}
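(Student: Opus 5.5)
For (i), I would write $x=1+y$ with $v(y)=t>0$ and expand
$$x^p-1=y^p+\sum_{i=1}^{p-1}\binom{p}{i}y^i .$$
Since $p\mid\binom pi$ for $1\le i\le p-1$, each middle term has valuation at least $e+it$. The term $py$ has valuation exactly $e+t$, and every other middle term has valuation at least $e+2t>e+t$. So the middle sum has valuation exactly $e+t$, while $v(y^p)=pt$. When $pt\ne e+t$, i.e.\ $t\ne e/(p-1)$, the ultrametric inequality gives $v(x^p-1)=\min\{pt,e+t\}$. This minimum is $pt$ when $t<e/(p-1)$ and $e+t$ when $t>e/(p-1)$. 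The boundary case $t=e/(p-1)$ cannot occur: $t$ lies in the value group, which is $\Z$ since $v$ is normalized with $v(p)=e$, and $e/(p-1)\notin\Z$ because $p-1\nmid e$. Hence the two cases in the statement (which overlap only formally at $t=e/(p-1)$) are consistent and exhaustive.

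For (ii), I would use the standard criterion that a finite extension is totally ramified when its ramification index equals its degree. Let $\alpha$ be a root of $f=X^{p^r}+p$. Since $f$ is Eisenstein, $\Q_p(\alpha)/\Q_p$ is totally ramified of degree $p^r$. The splitting field is $\Q_p(\alpha,\mu_{p^r})$, and $\Q_p(\mu_{p^r})/\Q_p$ is totally ramified of degree $\varphi(p^r)=p^{r-1}(p-1)$, because it is generated by a root of the Eisenstein polynomial $\Phi_{p^r}(X+1)$. Ramification indices are multiplicative in towers, so the compositum has ramification index divisible by $\operatorname{lcm}(p^r,p^{r-1}(p-1))=p^r(p-1)$. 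Its degree is at most $p^r\cdot p^{r-1}(p-1)$, so this alone does not force total ramification. To finish, I would pass to $M=\Q_p(\mu_{p^r})$ and show that $M(\alpha)/M$ is totally ramified. Take a uniformizer $\varpi$ of $M$, so that $p=\varpi^{\phi}u$ with $\phi=p^{r-1}(p-1)$ and $u$ a unit; then $\alpha^{p^r}=-p$ with $v_M(p)=\phi$. I would show that the residue field does not grow by checking that $v(\alpha^{j}\cdot\text{stuff})$ attains all classes modulo the degree, or more simply by a norm/valuation count: $v_M(\alpha)=\phi/p^r$, whose denominator contributes ramification. This gets delicate when $p^r\mid\phi$ fails only partially.

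A cleaner route I would try first avoids this by a Galois-theoretic count. The residue field $k$ of the splitting field $E$ is generated over $\F_p$ by the residues of the ratios $\alpha_i/\alpha_j$, which lie in $\mu_{p^r}$ and so have residue $1$. More precisely, every element of $E$ is a polynomial in $\alpha$ and $\zeta$ with $\Q_p$-coefficients. The maximal unramified subextension $E^{ur}$ is Galois over $\Q_p$ and contained in $E$, and I would show it must be $\Q_p$. If $E^{ur}\ne\Q_p$, it contains the unramified extension of degree $\ell$ for some prime $\ell$. That extension is cyclic and Galois, so $\operatorname{Gal}(E/\Q_p)$ would surject onto $\Z/\ell$ compatibly with the nontrivial Frobenius on residue fields. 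Since $k$ is generated by residues of units in $E$, and units of $\Q_p(\alpha)$ and $\Q_p(\zeta)$ have residues in $\F_p$, products and sums of such residues stay in $\F_p$. The subtlety is that sums of products of elements from the two totally ramified fields could produce new residues. I expect this step to be the main obstacle, and would handle it via the different: because $\Q_p(\zeta)$ and $\Q_p(\alpha)$ both have ramification group equal to the whole inertia, $E$ equals its inertia field closure. Concretely, $E/\Q_p(\alpha)$ is generated by $\zeta$, a root of an Eisenstein polynomial over $\Q_p$, and I would apply Lemma~\ref{lem: wild case lemma}(i) to $1$-units to see that the minimal polynomial of $\zeta-1$ over $\Q_p(\alpha)$ remains Eisenstein after a suitable uniformizer change.

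For the second statement, write $e=p^rm$ with $p\nmid m$ and $(p-1)\mid e$, so $(p-1)\mid m$. Then $\sqrt[e]{-p}$ generates an Eisenstein extension of degree $e$, and $\mu_{p^r}$ generates an extension of degree $p^{r-1}(p-1)$. The compositum is $\Q_p(\beta,\mu_{p^r})$ with $\beta^e=-p$. Since $\beta^m$ is a root of $X^{p^r}+p$, the first part shows that $\Q_p(\beta^m,\mu_{p^r})$ is totally ramified. Adjoining $\beta$ is then a tame Kummer step of degree dividing $m$: it extracts an $m$-th root of a uniformizer-related element. The condition $(p-1)\mid m$ should ensure, through Fact~\ref{fact:totally ramified and Eisenstein}(ii), that this tame step is Eisenstein, that is, that the relevant unit is an $m$-th power residue so that the residue field stays $\F_p$. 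This makes the whole extension totally ramified.
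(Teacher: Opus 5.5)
Your part (i) is correct and is the paper's argument; you should also dispose of the trivial case $x=1$, where $t=\infty$.

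The gap is in (ii). You never prove that $E=\Q_p(\alpha,\mu_{p^r})$ is totally ramified, and none of the routes you sketch can be completed as written.
\begin{itemize}
\item A valuation count over $M=\Q_p(\mu_{p^r})$ fails already for $r=2$. There $v_M((-p)^{1/p})=p-1$ is an integer, so nothing forces $M((-p)^{1/p})/M$ to ramify.
\item The subtlety you flag in the residue-generation argument is real. A compositum of totally ramified extensions can have a larger residue field: for odd $p$ and a nonsquare unit $u$, the field $\Q_p(\sqrt p)\,\Q_p(\sqrt{up})$ contains $\sqrt u$. So the specific arithmetic of $X^{p^r}+p$ has to enter.
\item Your proposed fix is not just vague but false as stated. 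Since $e_E$ is divisible by $\mathrm{lcm}(p^r,p^{r-1}(p-1))$, we get $v_E(\zeta_{p^r}-1)=e_E/(p^{r-1}(p-1))\ge p$. So $\zeta_{p^r}-1$ is never a uniformizer of $E=\Q_p(\alpha)(\zeta_{p^r}-1)$, and its minimal polynomial over $\Q_p(\alpha)$ is never Eisenstein.
\item Part (i) is unavailable when $p=2$, since then $p-1\mid e$.
\end{itemize}

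The missing idea, which the paper uses, continues from where you started and then stopped. The maximal unramified subextension of $E/\Q_p$ is cyclic, hence lies in the maximal abelian subextension $E'$. Since $M/\Q_p$ is abelian and $E=M(\alpha)$ is a cyclic Kummer extension of $M$, we have $E'=M(\alpha^{p^{r-i}})$ for some $0\le i\le r$.
\begin{itemize}
\item For odd $p$, $i\ge1$ would make $\Q_p((-p)^{1/p})$ Galois over $\Q_p$, hence contain $\mu_p$. This is impossible since $p-1\nmid p$. So $E'=M$, which is totally ramified.
\item For $p=2$, $i\ge2$ would force $\sqrt{-2}\in\Q_2(\sqrt{-1})$. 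So $E'\subseteq\Q_2(\mu_{2^c})$ with $c=\max\{r,3\}$, again totally ramified.
\end{itemize}

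Your second assertion inherits this gap, and its mechanism is misidentified. $\Q_p(\beta^m,\mu_{p^r})$ need not contain $\mu_m$, so the last step is not Kummer. Fact~\ref{fact:totally ramified and Eisenstein}(ii) describes tame totally ramified extensions; it does not certify total ramification. Valuations force only ramification index $m/(p-1)$ for the step adjoining $\beta$. The remaining factor $p-1$ is absorbed because $\Q_p(\mu_p)=\Q_p((-p)^{1/(p-1)})$. In the paper's form, set $B=\Q_p(\beta^m,\mu_p)\subseteq\Q_p(\beta)$. Then $\Q_p(\beta)/B$ is totally ramified of degree $m/(p-1)$, which is prime to $p$. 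Also $E/B$ is totally ramified of degree dividing $p^{r-1}$. These coprime degrees force the compositum $\Q_p(\beta,\mu_{p^r})$ to be totally ramified.
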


 \begin{proof}
    (i) The case $x=1$ is obvious.
    Assume $x\neq1$.
    The binomial expansion  shows that  the first and last terms of 
     $$
 x^p-1 =    (1+(x-1))^p-1=  p(x-1)+\sum _{1< i< p} \binom{p}{i}(x-1)^i+(x-1)^p
     $$ 
     have values   $e+t$ and  $pt$, respectively.
Every intermediate term has greater value than $\min\{pt,e+t\}$.
Since $p-1 \nmid e$, we have $e+t\neq pt$, so $v(x^p-1)=\min\{pt,e+t\}$.
The assertion follows by noting  that  for a nonnegative real number $c$,
$\, c\le e/(p-1) \ \Leftrightarrow \  pc \le e+c$.

(ii) 
Put $\alpha:= (-p)^{1/p^r}$.
 Let $F$ be the splitting field of $f(X)$ over $\Q_p$ and let $F'/\Q_p$ be the maximal abelian subextension of $F/\Q_p$.
 Then $F'=\Q_p(\mu_{p^r}, \alpha^{p^{r-i}})$ for some $0\le i \le r$ by Kummer theory.

\begin{itemize}
  \item 
Suppose $p>2$.
If $i\ge 1$, we have that $\Q_p(\alpha^{p^{r-1}})/\Q_p $ is abelian, and hence $\Q_p(\alpha^{p^{r-1}})$ contains $\mu_p$.
Since $[\Q_p(\alpha^{p^{r-1}}): \Q_p]=p$, this is a contradiction and so $i=0$ and $F'=\Q_p(\mu_{p^r})$.

  \item 
Suppose $p=2$.
If $i\ge 2$, then $\Q_2(\alpha^{2^{r-2}})/\Q_2$ is abelian of degree $4$, and hence $\zeta_4=\sqrt{-1}$ is contained in $\Q_2(\alpha^{2^{r-2}})$.
The nontrivial automorphism $\sigma$ of $ \Gal(\Q_2(\alpha^{2^{r-2}})/\Q_2(\mu_4))$ fixes $\mu_4$, hence  $\sigma(\alpha^{2^{r-2}}) = -\alpha^{2^{r-2}}$.
Therefore we obtain $\alpha^{2^{r-1}}=\sqrt{-2} \in \Q_2(\mu_4)$, which is a contradiction.
Hence we get $i \le 1$ and $F'\subset \Q_2(\mu_{2^r}, \sqrt{-2}) \subset \Q_2(\mu_{2^c})$ where $c:= \mr{max}\{r, 3 \} $ and we note $\Q_2(\zeta_8)= \Q_2(\zeta_4, \sqrt{-2})$.
\end{itemize}
In either case, $F'/\Q_p$ is totally ramified.
Since the maximal unramified subextension of $F/\Q_p$ is abelian, we conclude that $F/\Q_p$ is totally ramified.

Now put $\pi:=(-p)^{1/e}$.
We may assume  $\alpha=\pi^m$. 
Since $\mathbb Q_p(\mu_p)=\mathbb Q_p(\pi^{e/(p-1)})$, the fields $F$ and $\mathbb Q_p(\pi)$ are totally ramified extensions of $\mathbb Q_p(\alpha,\mu_p)$ of degrees dividing $p^{r-1}$ and equal to $m/(p-1)$, respectively. 
Since $p\nmid m/(p-1)$, it follows that  $\mathbb Q_p(\pi,\mu_{p^r}) /\Q_p$ is totally ramified.
 \end{proof}

 Write
$
e=p^{r'}m,\   \  p\nmid m.
$
 Also we simply write
\begin{equation*} 
j'=\left\lfloor\frac{e}{p -1}\right\rfloor, \ \  n_0=e(1+r')+j'.
\end{equation*}
We construct nonisomorphic DVRs with isomorphic   residue rings at length $n_0$.
We treat the cases  $(p-1)\nmid e$ and $(p-1)\mid e$ separately.

\begin{proposition}\label{prop: wild nondivisible case}
Suppose $p\mid e$. 
Let $\pi_1$ and $\pi_2$ be roots of
$$
f(X)=X^e+p\ \hbox{ and } \ g(X)=X^e+p+p^{r'+1}X^{j'},
$$
respectively.
Put 
$$R_i:=\Z_p[\pi_i], \ \ K_i:=\Q_p(\pi_i), \ \ \mf m_i=(\pi_i) , \quad i=1, 2.$$ 
Then   $R_1/\mathfrak m_1^{n_0}\cong R_2/\mathfrak m_2^{n_0}$, but $R_1\not\cong R_2$.
Moreover, $f(X)$ has no root in $K_2$.
\end{proposition}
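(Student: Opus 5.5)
The plan is to compare both residue rings with the single ring $\Z_p[X]/(X^e+p,X^{n_0})$, and to rule out an isomorphism by showing directly that $X^e+p$ has no root in $K_2$. For the non-isomorphism part I will assume $(p-1)\nmid e$, which is the case suggested by the title of this proposition and the sentence preceding it. I flag below where the other case would need a different argument.

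\textbf{Step 1: set-up.} Both $f$ and $g$ are Eisenstein over $\Z_p$. For $g$ this holds because $p^{r'+1}$ divides the middle coefficient and $j'<e$. Hence $R_i$ is a complete DVR with uniformizer $\pi_i$, residue field $\F_p$, and $v_i(p)=e$. Consequently
\[
R_i/\mathfrak m_i^{n_0}\cong \Z_p[X]/(h_i,X^{n_0}), \qquad h_1=f,\ h_2=g .
\]

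\textbf{Step 2: the two ideals coincide.} It suffices to show $(f,X^{n_0})=(g,X^{n_0})$, which amounts to $p^{r'+1}X^{j'}$ lying in both ideals.
\begin{itemize}
\item In $R_1$ we have $p=-\pi_1^e$. Thus $p^{r'+1}\pi_1^{j'}=\pm\pi_1^{e(r'+1)+j'}=\pm\pi_1^{n_0}$.
\item In $R_2$ the relation is $p(1+p^{r'}\pi_2^{j'})=-\pi_2^e$, and $1+p^{r'}\pi_2^{j'}$ is a unit. Hence $p^{r'+1}\pi_2^{j'}$ has valuation $n_0$ and lies in $\pi_2^{n_0}R_2$.
\end{itemize}
This gives the isomorphism $R_1/\mathfrak m_1^{n_0}\cong R_2/\mathfrak m_2^{n_0}$ sending $\pi_1\mapsto\pi_2$. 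Note also that $R_1\not\cong R_2$ follows once $f$ has no root in $K_2$, since any isomorphism would send $\pi_1$ to a root of $f$ in $R_2$.

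\textbf{Step 3: reduction to a $p^{r'}$-th power problem.} Suppose $\beta\in K_2$ satisfies $\beta^e=-p$. Put $u=\beta/\pi_2$, a unit. Then
\[
u^e=(1+p^{r'}\pi_2^{j'})^{-1}=1+w, \qquad v_2(w)=er'+j'.
\]
Write $e=p^{r'}m$ with $p\nmid m$ and set $z=u^m$. Then $z^{p^{r'}}=1+w$. Since the residue field is $\F_p$ and Frobenius is bijective on it, $\bar z^{p^{r'}}=1$ forces $\bar z=1$, so $t_0:=v_2(z-1)>0$.

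\textbf{Step 4: the valuation obstruction (main step).} Put $t_s:=v_2(z^{p^s}-1)$. By induction on $s\ge1$ I will show that $t_s\neq es+j'$; taking $s=r'$ then contradicts $t_{r'}=v_2(w)=er'+j'$. Since $(p-1)\nmid e$, Lemma~\ref{lem: wild case lemma}(i) applies with threshold $e/(p-1)$. As this threshold is not an integer, the condition $t\le e/(p-1)$ is the same as $t\le j'$. There are two cases.
\begin{itemize}
\item If $t_{s-1}\le j'$, then $t_s=pt_{s-1}\le pj'$. Now $(p-1)j'<e$ gives $pj'<e+j'\le es+j'$, so $t_s\neq es+j'$.
\item If $t_{s-1}>j'$, then $t_s=e+t_{s-1}$. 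This equals $es+j'$ only if $t_{s-1}=e(s-1)+j'$. For $s\ge2$ that is excluded by the induction hypothesis. For $s=1$ it would require $t_0=j'$, which contradicts $t_{s-1}>j'$.
\end{itemize}

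The main obstacle is the case $(p-1)\mid e$. There $e+t=pt$ can occur at the threshold, so Lemma~\ref{lem: wild case lemma}(i) fails. I would expect that case to need a modified $g$ and an argument using Lemma~\ref{lem: wild case lemma}(ii) about total ramification of $\Q_p(\sqrt[e]{-p},\mu_{p^{r'}})$. If the statement is meant to cover that case too, I would first try to control the $p^{r'}$-th roots of unity through that total ramification before rerunning the valuation count above.
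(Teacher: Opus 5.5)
Your Steps 1--4 are correct when $(p-1)\nmid e$. They are essentially the paper's Case 1 with the roles of the two fields swapped: you look for a root of $f$ in $K_2$, the paper for a root of $g$ in $K_1$. These are equivalent because $[K_1:\Q_p]=[K_2:\Q_p]=e$.

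\textbf{The gap.} The proposition assumes only $p\mid e$, and you leave out the case $(p-1)\mid e$. This is not a marginal case. It contains every instance with $p=2$, and Theorem~\ref{thm: lifting number} relies on the proposition there for the lower bound $N_p(e)>n_0$. Your expectation that this case needs a modified $g$ is also mistaken: the paper proves it for the same $g$.

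There is also a small slip in Step 1. For $p=2$ one has $j'=e$, so $g=(1+2^{r'+1})X^e+2$ is not monic and ``$j'<e$'' fails. It is Eisenstein only after dividing by its unit leading coefficient, which does not affect Step 2.

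\textbf{Why your count cannot be rerun.} When $(p-1)\mid e$ we have $j'=e/(p-1)$. At $t=j'$ the terms $py$ and $y^p$ of $(1+y)^p-1$ have the same value and can cancel. The sequence $t_0=j'$, $t_s=es+j'$ is then compatible with every constraint obtainable from valuations of the binomial terms. So no version of the count in Step 4 can produce the contradiction. What is needed is residue-level information, and for that the relevant roots must lie in a field whose residue field is still $\F_p$.

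\textbf{The paper's argument.}
\begin{enumerate}
\item Suppose $\beta\in K_1$ is a root of $g$; then $v(f(\beta))=n_0$.
\item Take a root $\alpha$ of $f$ nearest to $\beta$. The inequality $v(f(\beta))\le v(\beta-\alpha)+v(f'(\alpha))$, with $v(f'(\alpha))=e(r'+1)-1$, gives $v(\beta-\alpha)\ge j'+1$.
\item Write $\alpha=\pi_1\xi\chi$ with $\xi\in\mu_m$ and $\chi\in\mu_{p^{r'}}$. The congruence in (ii) forces $\bar\xi\in\F_p^\times$, so $\xi\in\mu_{p-1}\subset\Q_p$. Hence $\alpha\in E:=K_1(\mu_{p^{r'}})$, whose residue field is $\F_p$ by Lemma~\ref{lem: wild case lemma}(ii).
\item Then $s=(\alpha-\beta)/\beta^{j'+1}\in\mathcal O_E$ is a root of $H(X)=f(\beta+\beta^{j'+1}X)/(p^{r'+1}\beta^{j'})$. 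This polynomial has integral coefficients and reduces to $\overline H(X)=\bar m(X^p-X)-1$.
\item Since $x^p=x$ for all $x\in\F_p$, $\overline H$ has no root in $\F_p$, contradicting $\overline H(\bar s)=0$.
\end{enumerate}
This Artin--Schreier obstruction is the missing idea. Lemma~\ref{lem: wild case lemma}(ii) enters only to keep the residue field equal to $\F_p$, not to feed a further valuation count.
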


\begin{proof} 
Since $f$ and $g$ are Eisenstein, $R_1$ and $R_2$ are DVRs with the same residue field $\F_p$ and the same initial ramification index $e$. 
Note that $g$ is not monic when $p=2$.
Since
$
f(\pi_2)=-p^{r'+1}\pi_2^{j'}\in\mathfrak m_2^{n_0},
$
the substitution $\pi_1\mapsto\pi_2$ induces    $R_1/\mathfrak m_1^{n_0} \cong R_2/\mathfrak m_2^{n_0}$.
Let $v$ be a   valuation on $\Q_p^{sep}$ with  $v(\pi_1)=1$ and $v(p)=e$.
We prove $g(X)$ has no root in $K_1$.
Then $R_1 \not\cong R_2$ and $f$ has no root in $K_2$.

\medskip
\textbf{Case 1}: suppose $p-1\nmid e$.
 Suppose $g(X)$ has a root $\alpha$ in $K_1$. 
Put  $y:=\alpha/\pi_1$ and $z:=y^m$.
Then $y$ is  a unit and 
\begin{equation}\label{eq: wild nondivisible 1}
y^e-1 = p^{r'}\pi_1^{j'} y^{j'}.
\end{equation}
Also, reduction in the residue field $\mb F_p$ gives $\bar z^{p^{r'}}=1$, so  $\bar z=1$. 
Moreover, we have $z\ne1$ and
\begin{equation}\label{eq: wild nondivisible 2}
v(z^{p^{r'}}-1)=er'+j'
\end{equation}
by \eqref{eq: wild nondivisible 1}.
 Put $t=v(z-1)$.
 
 \begin{itemize}
     \item 

If $t \le j' <e/(p-1)$, then $ v(z^p-1) =pt \le pj'$ and   $ v(z^{p^{a+1}}-1) \le v(z^{p^{a}}-1) +e\,$ for $\,1\le a \le r'\,$ by Lemma~\ref{lem: wild case lemma}.
Thus
$$
v(z^{p^{r'}}-1)\le pj'+e(r'-1)<er'+j',
$$
where the last inequality follows from  $(p-1)j'<e$. 
     \item 
     If $t \ge j'+1  >e/(p-1)$, then $ v(z^{p^a}-1)= t+ea$ for $1\le a \le r'$ by Lemma~\ref{lem: wild case lemma}.
     Thus  
$$
v(z^{p^{r'}}-1)=   er' +  t>  er' +j'.
$$ 
 \end{itemize}
Both possibilities contradict \eqref{eq: wild nondivisible 2}.
Thus $g(X)$ has no root in $K_1$.

\medskip
\textbf{Case 2}:
suppose $p-1 \mid e$.
 Put $E:=K_1(\mu_{p^{r'}})$, whose residue field is $\F_p$ by Lemma~\ref{lem: wild case lemma}.
Suppose that $\beta\in K_1$ is a root of $g$.
Since $g$ is Eisenstein, we have $v(\beta)=1$ and
\begin{equation}\label{eq: wild divisible 1}
f(\beta)=-p^{r'+1}\beta^{j'}, \  \ \frac{\beta^e}{p} = -1-p^{r'}\beta^{j'} \equiv -1 \pmod{\mathfrak m_E}.
\end{equation}
In particular, $v(f(\beta))=n_0$.

Choose a root $\alpha$ of $f$ such that $v(\beta-\alpha) = \mr{max} \{ v(\beta-\alpha'): f(\alpha')=0 \}$.
For every other root $\alpha'$ of $f$, the ultrametric inequality gives $ v(\alpha-\alpha') \ge   v(\beta-\alpha') $.
Since $f'(\alpha)=e\alpha^{e-1}$ and $v(\alpha)=1$, it follows that
$$
n_0=v(f(\beta)) \leq v(\beta-\alpha)+v(f'(\alpha)) = v(\beta-\alpha)+e(r'+1)-1.
$$
Thus $v(\beta-\alpha)\geq j'+1$.

We claim that $\alpha\in E$.
If we write $\alpha= \pi_1 \xi \chi$ with $\xi \in \mu_m$ and $\chi \in \mu_{p^{r'}}$, then we have $\bar{\xi}=\overline{\beta\pi^{-1}_1}\in \F_p^\times$ since $v(\beta-\alpha)\geq j'+1>1$ and $\bar{\chi}=1$.
Thus we have $\xi\in \mu_{p-1}\subset \Q_p$, and hence $\alpha\in E$ and   $ s:=(\alpha-\beta)/\beta^{j'+1} \in \mathcal O_E $.

Consider the following polynomial
$$
H(X):=\frac{f(\beta+\beta^{j'+1}X)}{p^{r'+1}\beta^{j'}} = -1+\sum_{i=1}^{e}b_iX^i, \ \ b_i:=\binom ei\frac{\beta^{e+j'(i-1)}}{p^{r'+1}}.
$$
Then $H(s)=0$.
Using
$\binom ei=(e/i)\binom{e-1}{i-1}$ and $e=j' (p-1)$, we obtain
$$
v(b_i) = e\left(v_p\binom ei-r'\right)+j'(i-1) \ge  j' \bigl(i-1-(p-1)v_p(i)\bigr) \ge 0.
$$
The last expression is positive except for $i=1, p \ $ since  equality in $i-1\ge p^{v_p(i)}-1\ge(p-1)v_p(i)$ holds only at $i=1,p$.
Moreover, we have
$$
b_1=m\frac{\beta^e}{p}, \ \ b_p=\frac{\binom ep}{p^{r'-1}}  \left(\frac{\beta^e}{p}\right)^2, \ \ \frac{\binom ep}{p^{r'-1}} =  m\binom{e-1}{p-1}\equiv m\pmod p.
$$
Since $\beta^e/p\equiv -1\pmod{\mathfrak m_E}$ by \eqref{eq: wild divisible 1}, we have $\overline{b_1} = -\bar{m} $ and $\overline{b_p}=\bar m$. 
Therefore we get
$$
H(X)\in\mathcal O_E[X] \ \hbox{ and } \ \overline H(X)=\bar m(X^p - X)- 1 \in \F_p[X].
$$
Since $\overline{H}(x) =-1$  for all $x\in \F_p$, this  contradicts $\overline{H}(\bar{s})=0$.
 Thus $g(X)$ has no root in $K_1$. 
\end{proof}

\begin{theorem} \label{thm: lifting number}
For every prime $p$ and positive integer $e$, lifting numbers are given as follows:
$$
N_{p} (e)=
\begin{cases}
1,&  \hbox{ if }\ e=1,\\
e+1,&  \hbox{ if } \ e>1\ \hbox{ and }\  p\nmid e,\\
e(1+v_p(e))+\lfloor e/(p-1)\rfloor+1,&  \hbox{ if } \  p\mid e.
\end{cases}
$$
 
\end{theorem}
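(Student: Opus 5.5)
The plan is to prove two inequalities in each case. The upper bound $N_p(e)\le n$ comes from Theorem~\ref{thm: approximate lifting}. The lower bound $N_p(e)\ge n$ comes from exhibiting non-isomorphic complete $\Z$-valued fields whose residue rings of length $n-1$ are isomorphic. For the upper bound, let $n$ be the value on the right-hand side and suppose $\varphi:\mathcal O_{v,n}\to\mathcal O_{w,n}$ is an isomorphism. Then Theorem~\ref{thm: approximate lifting} applies verbatim, since its hypothesis on $n$ is exactly this case distinction. The isometric embedding $\Phi$ it produces is an isomorphism, so $(K,v)\cong(L,w)$. If $\mathcal O_{v,m}\cong\mathcal O_{w,m}$ for some $m\ge n$, we first project to $\mathcal O_{v,n}\cong\mathcal O_{w,n}$ via $\res^m_n$; this projection is well defined because an isomorphism maps $\mathfrak m^n$ onto $\mathfrak m^n$, the maximal ideal being $\{x:x^m=0\}$. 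Hence $N_p(e)\le n$ in all three cases. For $e=1$ this already gives $N_p(1)=1$, because $N_p(e)\ge1$ by definition.

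For the lower bounds, I would note that the defining property is monotone in $n$. An isomorphism at length $n'$ induces one at every shorter length, so failure at $n-1$ implies failure at every $n'\le n-1$. It therefore suffices to exhibit a counterexample at length $n-1$.

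In the case $e>1$, $p\nmid e$, the required length is $n-1=e$, and Proposition~\ref{prop:tame-pair} supplies $K_1,K_2$ with isomorphic residue rings of length $e$. The two fields are not isomorphic: $X^e+p$ has a root in $K_1$ but none in $K_2$, and any field isomorphism would carry a root to a root. Thus $N_p(e)\ge e+1$.

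In the case $p\mid e$, the required length is $n-1=n_0=e(1+r')+j'$, and Proposition~\ref{prop: wild nondivisible case} gives $R_1/\mathfrak m_1^{n_0}\cong R_2/\mathfrak m_2^{n_0}$ with $R_1\not\cong R_2$. I would check that these are complete: $R_i=\Z_p[\pi_i]$ is the valuation ring of the finite extension $K_i$ of $\Q_p$, hence a complete DVR. Their initial ramification index is $e$, since the defining polynomials are Eisenstein of degree $e$. So $N_p(e)\ge n_0+1$.

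There is essentially no obstacle, as all the difficulty lies in the earlier propositions. The only points needing care are the monotonicity remark and the observation that isomorphic residue rings of length $m$ induce isomorphic ones of every smaller length. This makes the minimal $n$ in Definition~\ref{def:lifting-number} coincide with the threshold value in each case.
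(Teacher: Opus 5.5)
Your proposal is correct and matches the paper's proof: in each case the upper bound is Theorem~\ref{thm: approximate lifting} applied to an isomorphism, and the lower bound comes from the counterexamples of Propositions~\ref{prop:tame-pair} and~\ref{prop: wild nondivisible case} at length $N_p(e)-1$. Your remarks on monotonicity and on the completeness of $R_i=\Z_p[\pi_i]$ spell out what the paper's one-line proof leaves implicit. One small caveat: for $p=2$ the polynomial $g$ is Eisenstein only after dividing by its unit leading coefficient.
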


\begin{proof}
This follows from    Propositions~\ref{prop:tame-pair} and  \ref{prop: wild nondivisible case}, together with  Theorem~\ref{thm: approximate lifting}.
\end{proof}

\section{Ax-Kochen-Ershov principle via residue rings}\label{sect:AKE via higher residue rings}
In this section, we prove several Ax-Kochen-Ershov principles for relative completeness, relative existential completeness, and existential closedness using the pure ring structures on the residue rings. First, we provide an embedding lemma via residue rings, analogous to \cite[Lemma 4.12]{ADJ24}.

\begin{lemma}\label{lem:embedding lemma over substr}
Let $(K,v)$ and $(L,w)$ be two extensions of a valued field $(K_0,v_0)$, all of which are henselian and finitely ramified of mixed characteristic $(0,p)$ having the same initial ramification index $e$. Suppose that
\begin{itemize}
	\item $v K/v_0 K_0$ is torsion-free;
	\item $Kv/K_0v_0$ is separable; and
	\item $(K_0,v_0)$ and $(K,v)$ are $\aleph_1$-saturated, and $(L,w)$ is $|K|^+$-saturated.
\end{itemize}
Let $n$ be a positive integer. Then, if there are an $\mathcal L_{ring}$-embedding $\varphi_{\mathcal O_n}:\mathcal O_{v,n}\rightarrow \mathcal O_{w,n}$ over $\mathcal O_{v_0,n}$ and an $\mathcal L_{oag}$-embedding $\varphi_{\Gamma}:vK\rightarrow wL$ over $v_0K_0$, then there is an $\mathcal L_{val}$-embedding $K\rightarrow L$ over $K_0$ inducing $\varphi_{\mathcal O_n}$ and $\varphi_{\Gamma}$.
\end{lemma}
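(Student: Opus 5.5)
The plan is to reduce the statement to two independent lifting problems by coarsening, as in \cite[Lemma~4.12]{ADJ24}. The first is an exact lifting problem between the complete discretely valued ``cores''. The second is a relative embedding problem for henselian valued fields of equicharacteristic $0$.

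\textbf{Step 1: reduce to the cores.} Let $\dot v_0,\dot v,\dot w$ be the finest equicharacteristic $0$ coarsenings of $v_0,v,w$ (Subsection~\ref{subsec:coarsening}). By $\aleph_1$-saturation, the residue fields $(K_0\dot v_0,v_0^\circ)$, $(K\dot v,v^\circ)$ and $(L\dot w,w^\circ)$ are complete $\mathbb Z$-valued fields of mixed characteristic $(0,p)$ with initial ramification index $e$. By Fact~\ref{fact:coarsening and higher residue ring}, the maps $\varphi_{\mathcal O_n}$ and $\mathcal O_{v_0,n}\subseteq\mathcal O_{v,n}$ transfer to homomorphisms between the residue rings of length $n$ of these cores. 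By Remark~\ref{rem:coarsening and section}, we also get sections $K\dot v\hookrightarrow\mathcal O_{\dot v}$, and similarly for $L$. The hypotheses give that $v_0^\circ(K_0\dot v_0)=(v_0K_0)^\circ$ and $v^\circ(K\dot v)=(vK)^\circ$ are the same copy of $\mathbb Z$, since $vK/v_0K_0$ is torsion-free and both ramification indices are $e$. Moreover $(K\dot v)v^\circ=Kv$ is separable over $K_0v_0$. So the extension of cores $\mathcal O_{v_0^\circ}\subseteq\mathcal O_{v^\circ}$ satisfies $\mathfrak m_{v_0^\circ}\mathcal O_{v^\circ}=\mathfrak m_{v^\circ}$ with separable residue extension.

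\textbf{Step 2: exact lifting on the cores.} Because the core extension is unramified with separable residue extension, Lemma~\ref{lemma lifting p-basis} applies directly. We take $R_0=\mathcal O_{v_0^\circ}$, $R=\mathcal O_{v^\circ}$ and $A=\mathcal O_{w^\circ}$, which is complete. For $\phi_n$ we take the composite $R\to\mathcal O_{v^\circ,n}\cong\mathcal O_{v,n}\xrightarrow{\varphi_{\mathcal O_n}}\mathcal O_{w,n}\cong\mathcal O_{w^\circ,n}$. This is an $R_0$-algebra homomorphism, where $R_0$ maps into $A$ via the given inclusion $K_0\subseteq L$ passed to cores. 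This yields an embedding $K\dot v\to L\dot w$ over $K_0\dot v_0$ that induces $\varphi_{\mathcal O_n}$ exactly. There is no loss of precision, in contrast to Theorem~\ref{thm: approximate lifting}, because no Eisenstein step is needed. One point needs checking here: the sections for $K_0$ and $K$ must be chosen compatibly, so that the image of $K_0\dot v_0$ in $K$ is the $K_0$-section. I would handle this by first fixing a section for $K_0$. I would then extend it to $K$ by applying Lemma~\ref{lemma lifting p-basis} once more, this time inside $\mathcal O_{\dot v}$ with the identity on residue rings, using the $\aleph_1$-saturation of $K$.

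\textbf{Step 3: the equicharacteristic $0$ step.} Consider the henselian valued fields $(K,\dot v)$ and $(L,\dot w)$ over $(K_0,\dot v_0)$. Their residue fields carry the embedding from Step 2 over $K_0\dot v_0$. Their value groups are $vK/(vK)^\circ\to wL/(wL)^\circ$, and this map is induced by $\varphi_\Gamma$ over $v_0K_0$. This requires $\varphi_\Gamma$ to preserve the smallest convex subgroup, which it does because it sends the least positive element to the least positive element; for that, $\varphi_\Gamma$ must fix $v(p)$, which lies in $v_0K_0$. Torsion-freeness of $\dot vK/\dot v_0K_0$ follows from that of $vK/v_0K_0$ and the equality of the smallest convex subgroups. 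The relative embedding lemma for henselian fields of equicharacteristic $0$ then gives a $\dot v$-embedding $K\to L$ over $K_0$ inducing both maps, where we use $|K|^+$-saturation of $L$ (cf.\ Kuhlmann's embedding lemma, as used in \cite{ADJ24}). Finally, one checks that this map is an $\mathcal L_{val}$-embedding for $v$ that induces $\varphi_{\mathcal O_n}$ and $\varphi_\Gamma$. By Fact~\ref{fact:coarsening and higher residue ring}, $v$ is the composite of $\dot v$ with $v^\circ$, so it suffices that the core map is an isometry; the residue rings then agree.

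I expect the main obstacle to be the bookkeeping in Steps 1 and 3. One part is that the sections must be chosen compatibly over $K_0$. The other is that the value group map must be the one induced by $\varphi_\Gamma$ on all of $vK$, not just on the coarsened group. To handle this, I would apply the equicharacteristic $0$ lemma to the pair $(\dot v,\varphi_\Gamma)$ together with the data on $(vK)^\circ$, or else directly in an expanded language with the $v$-value group, as in \cite{ADJ24}.
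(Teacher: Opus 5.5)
\textbf{There is a genuine gap in Step~3: the equicharacteristic~$0$ lemma cannot force the induced map on $vK$ to be $\varphi_\Gamma$.}

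Your Steps~1 and~2 agree with the paper. The core embedding $\Phi^\circ\colon(K\dot v,v^\circ)\to(L\dot w,w^\circ)$ over $(K_0\dot v_0,v_0^\circ)$ is obtained exactly as you say, from Lemma~\ref{lemma lifting p-basis} with $R_0=\mathcal O_{v_0^\circ}$, $R=\mathcal O_{v^\circ}$, $A=\mathcal O_{w^\circ}$. The sections $K\dot v\hookrightarrow\mathcal O_{\dot v}$ are not needed, because $\Phi^\circ$ is over $K_0\dot v_0$ via the natural inclusion of residue fields. Nor could Lemma~\ref{lemma lifting p-basis} be run inside $\mathcal O_{\dot v}$, which is not $\mathfrak m_{\dot v}$-adically separated.

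Write $\Gamma^\circ=(vK)^\circ=(wL)^\circ$ and let $\epsilon$ be its least positive element. Any embedding $\Phi\colon(K,\dot v)\to(L,\dot w)$ over $K_0$ inducing $\Phi^\circ$ on residue fields is a $v$-embedding and induces $\varphi_{\mathcal O_n}$. However, the map it induces on $vK$ is determined only on $\Gamma^\circ$ and modulo $\Gamma^\circ$. It can therefore differ from $\varphi_\Gamma$ by a nonzero homomorphism $vK/v_0K_0\to\Gamma^\circ\cong\mathbb Z$. Concretely, take $a\in K$ with $v(a)=\gamma\notin v_0K_0$, $b\in L$ with $w(b)=\varphi_\Gamma(\gamma)$, and $\pi\in K_0$ with $v(\pi)=\epsilon$. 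On $K_0(a)$, the embeddings $a\mapsto b$ and $a\mapsto b\pi$ over $K_0$ induce the same residue field map and the same map on $\dot v$-value groups, but different maps on $v$-value groups. So your claim that ``it suffices that the core map is an isometry'' fails for the value group. The fix you only gesture at in your last paragraph is the actual content of the proof.

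The missing idea is to prescribe the map one level up, on the $\dot v$-co-structure $K^\times/(1+\mathfrak m_{\dot v})$. This group sits in the exact sequence $1\to\mathcal O_{v^\circ}^\times\to K^\times/(1+\mathfrak m_{\dot v})\to vK\to0$, so it sees the full value group $vK$, not just $\dot vK$. The paper proceeds as follows.
\begin{itemize}
\item Using $\aleph_1$-saturation and the torsion-freeness of $vK/v_0K_0$ (as in \cite[Lemmas~7.9 and 7.10]{vdD14}), it chooses a cross-section $s_0$ on $v_0K_0$, an extension $s$ of $s_0$ to $vK$, and a partial cross-section $t$ on $\varphi_\Gamma(vK)$ restricting to $s_0$.
\item It defines $\psi\bigl(u\,\overline{s(\gamma)}\bigr)=\Phi^\circ(u)\,\overline{t(\varphi_\Gamma(\gamma))}$. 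This $\psi$ is over $K_0$ and induces $\varphi_\Gamma$ on $vK$. It restricts to $\Phi^\circ$ on $(K\dot v)^\times$ because $s_0(\epsilon)\in K_0$.
\item It lifts $\psi$ to a field embedding by Basarab's theorem \cite[Theorem~2.1]{Bas91}.
\end{itemize}

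You also cannot simply quote a saturation-based equicharacteristic~$0$ embedding lemma for $(L,\dot w)$. The valuation $\dot w$ is not definable in $(L,w)$, and $(L,\dot w)$ is not even $\aleph_1$-saturated. Otherwise the interpretable field $L\dot w$, and with it the value group $\mathbb Z$ of its definable valuation $w^\circ$, would be $\aleph_1$-saturated. What the $|K|^+$-saturation of $(L,w)$ does give is $|K|$-pseudocompleteness of $(L,\dot w)$. That is exactly what Basarab's theorem needs once $\psi$ is supplied.

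Alternatively, you could first embed $K_0(s(vK))$ via $s(\gamma)\mapsto t(\varphi_\Gamma(\gamma))$, and only then apply an equicharacteristic~$0$ lemma with no value group growth. Either way, compatible cross-sections are indispensable.
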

\begin{proof}
We follow the construction in the proof of \cite[Lemma~4.12]{ADJ24}. For completeness, we give the details of its proof using Lemma \ref{lemma lifting p-basis} instead of \cite[Lemma~4.11]{ADJ24}.  Let $\dot v_0$, $\dot v$, and $\dot w$ be the finest  equicharacteristic $0$ coarsenings of $v_0$, $v$, and $w$, respectively. 
  By the saturation assumption,   $ (K_0\dot v_0,v_0^\circ),\  (K\dot v,v^\circ),  $  and $ (L\dot w,w^\circ) $ are complete  discretely valued fields with the same initial ramification index. 
Thus, under the natural inclusions, we have
$$
\Gamma^\circ
: =(v_0K_0)^\circ
=(vK)^\circ
=(wL)^\circ ,\ \ \mathfrak m_{v_0^\circ}\mathcal O_{v^\circ} =\mathfrak m_{v^\circ}, \ \ \mathfrak m_{v_0^\circ}\mathcal O_{w^\circ} =\mathfrak m_{w^\circ}.
$$
Let $\varphi^{\circ}_{\mathcal O_n}:\mathcal O_{v^{\circ},n}\rightarrow \mathcal O_{w^{\circ},n}$ be an $\mathcal L_{ring}$-embedding over $\mathcal O_{v^{\circ}_0,n}$ induced from $\varphi_{\mathcal O_n}$ by Fact~\ref{fact:coarsening and higher residue ring}.
 By Lemma~\ref{lemma lifting p-basis} with a local $\mathcal O_{v_0^{\circ}}$-algebra homomorphism $\varphi_{\mc O_n}^{\circ}\circ\res^{v^{\circ}}_n:\mathcal O_{v^{\circ}}\rightarrow \mathcal O_{w^{\circ},n}$, there is an embedding $\Phi^{\circ}:(K\dot v,v^{\circ})\rightarrow (L\dot w, w^{\circ})$ over $(K_0\dot v_0,v_0^{\circ})$ inducing $\varphi^{\circ}_{\mathcal O_n}$.

By the saturation assumptions and the torsion-freeness of $vK/v_0K_0$, as in \cite[Lemmas~7.9 and 7.10]{vdD14},  there is  a cross-section $ s_0:v_0K_0\longrightarrow K_0^\times$, a cross-section $ s:vK\longrightarrow K^\times $ extending $s_0$, and a partial cross-section $ t:\varphi_\Gamma(vK)\longrightarrow L^\times $ whose restriction to $v_0K_0$ is $s_0$.
The exact sequence
$$
1\longrightarrow\mathcal O_{v^\circ}^{\times} \longrightarrow K^\times/(1+\mathfrak m_{\dot v}) \longrightarrow vK
\longrightarrow 0
$$
is split by $s$, where the last map sends the class of $a$ to $v(a)$. Here we identify $\mathcal O_{v^\circ}^{\times}$ with its natural image in $K^\times/(1+\mathfrak m_{\dot v})$. We obtain an embedding
$$
\psi:K^\times/(1+\mathfrak m_{\dot v}) \longrightarrow L^\times/(1+\mathfrak m_{\dot w}), \  \  u\cdot \overline{s(\gamma)}  \longmapsto \Phi^\circ(u)\cdot \overline{t(\varphi_\Gamma(\gamma))} \ \ \bigl(u\in\mathcal O_{v^\circ}^{\times},\ \gamma\in vK\bigr).
$$
  By construction, $\psi$ is over $ K_0^\times/(1+\mathfrak m_{\dot v_0}) $ and induces the prescribed map $\varphi_\Gamma$ on the full value groups.

Let $\epsilon$ be the   least positive value in $\Gamma^\circ$ and put $\pi=s_0(\epsilon)$.
The group $(K\dot v)^\times$ is generated by $\mathcal O_{v^\circ}^{\times}$ and the residue class $\pi^\circ$ of $\pi$. 
Since $\Phi^\circ$ fixes $\pi^\circ$ and the cross-sections agree on $v_0K_0$, the restriction of $\psi$ to
$$
(K\dot v)^\times
\simeq\mathcal O_{\dot v}^{\times}/(1+\mathfrak m_{\dot v})
$$
agrees with $\Phi^\circ$. 
Let $ \varphi_\Gamma^\circ:\dot vK\longrightarrow\dot wL $  be the embedding induced by $\varphi_\Gamma$. 
We have the following commutative diagram with exact rows
$$
\begin{tikzcd}
1 \arrow[r] & (K\dot v)^{\times} \arrow[r] \arrow[d, "\Phi^{\circ}"]& K^{\times}/(1+\mathfrak{m}_{\dot v}) \arrow[r] \arrow[d, "\psi"] & \dot vK  \arrow[r] \arrow[d, "\varphi_{\Gamma}^{\circ}"] &0\\
1 \arrow[r] & (L\dot w)^{\times} \arrow[r]& L^{\times}/(1+\mathfrak{m}_{\dot w}) \arrow[r] & \dot wL  \arrow[r] & 0,
\end{tikzcd}
$$
compatible with the corresponding sequence for $(K_0,\dot v_0)$.
The left-hand map comes from the field embedding $\Phi^\circ$, the middle map is a group embedding, and the right-hand map is an ordered-group embedding. 
Thus these maps give an embedding of the co-structures of $(K,\dot v)$ and $(L,\dot w)$ over that of $(K_0,\dot v_0)$, in the terminology of \cite[Section~4]{Bel99}.

By  \cite[Theorem~2.1]{Bas91}, in the formulation of \cite[Théorème~4.2]{Bel99}, $\psi$ is induced by an embedding
$$
\Phi:(K,\dot v)\longrightarrow(L,\dot w)
$$
over $(K_0,\dot v_0)$. 
The required $|K|$-pseudocompleteness of $(L,\dot w)$ for \cite[Theorem~2.1]{Bas91} follows from the $|K|^+$-saturation of $(L,w)$.

By the construction of $\psi$, we have
$$
w(\Phi(a))=\varphi_\Gamma(v(a))
\qquad(a\in K^\times).
$$
Thus $\Phi$ is an embedding of $(K,v)$ into $(L,w)$ inducing $\varphi_\Gamma$. It also induces $\Phi^\circ$ on the cores, hence $\varphi_{\mathcal O_n}^\circ$ on their residue rings of length $n$. By Fact \ref{fact:coarsening and higher residue ring}, the induced map on $\mathcal O_{v,n}$ is therefore $\varphi_{\mathcal O_n}$. This gives the required $\mathcal L_{\mathrm{val}}$-embedding over $K_0$.  
\end{proof}
\noindent The following embedding lemma is analogous to \cite[Lemma 4.13]{ADJ24}.
\begin{lemma}\label{lem:relative embedding absolute}
Let $(K,v)$ and $(L,w)$ be two finitely ramified henselian valued fields of mixed characteristic $(0,p)$ having the same initial ramification index $e\ge 1$. Suppose $(L,w)$ is $|K|^+$-saturated.

Let $k\ge 1 $  and  $n\ge \mr {max}\{k+e-1+ev_p(e), N_p(e) \}.$ Let $\varphi:\mathcal O_{v,n}\rightarrow \mathcal O_{w,n}$ be a homomorphism and let $\varphi_{\Gamma}:vK\rightarrow wL$ be an $\mathcal L_{oag}$-embedding sending the minimal positive element in $vK$ to the minimal positive element in $wL$. Then, the pair of embeddings $\res_k(\varphi):\mathcal O_{v,k}\rightarrow \mathcal O_{w,k}$ and $\varphi_{\Gamma}$ is induced by a $\mathcal L_{val}$-embedding $K\rightarrow L$.
\end{lemma}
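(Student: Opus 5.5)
The plan is to reduce to the case where $K$ is $\aleph_1$-saturated, pass to the cores, apply the approximate lifting Theorem~\ref{thm: approximate lifting} there, and then reassemble $K$ from its core and its value group by the cross-section argument from the proof of Lemma~\ref{lem:embedding lemma over substr}. The hypothesis $n\ge \max\{k+e-1+ev_p(e),N_p(e)\}$ is exactly what Theorem~\ref{thm: approximate lifting} needs. It gives a lift that agrees with $\varphi$ at level $k$, because $k\le n-(e-1+ev_p(e))$.

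\emph{Step 1 (saturation).} Fix a nonprincipal ultrafilter $\mathcal U$ on $\omega$ and form $K^{\mathcal U}$, $L^{\mathcal U}$, $\varphi^{\mathcal U}:\mathcal O_{v^{\mathcal U},n}\to\mathcal O_{w^{\mathcal U},n}$ and $\varphi_\Gamma^{\mathcal U}$. The latter still sends the minimal positive element to the minimal positive element. Now $K^{\mathcal U}$ is $\aleph_1$-saturated. Next replace $L^{\mathcal U}$ by a $|K^{\mathcal U}|^+$-saturated elementary extension $L^*$. Suppose an $\mathcal L_{val}$-embedding $K^{\mathcal U}\to L^*$ inducing $\res_k(\varphi^{\mathcal U})$ and $\varphi^{\mathcal U}_\Gamma$ exists. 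Restricting it to the diagonal copy of $K$ gives an embedding $K\to L^*$ that induces $\res_k(\varphi)$ and $\varphi_\Gamma$. This is a realization in $L^*\succeq L$ of the quantifier-free type of $K$, with the conditions on $\mathcal O_k$ and $\Gamma$ expressed using the $\le|K|$ parameters $\res_k(\varphi)(\mathcal O_{v,k})$ and $\varphi_\Gamma(vK)$ from $L$. By $|K|^+$-saturation this type is realized in $L$, which gives the required embedding. So we may assume that $K$ is $\aleph_1$-saturated and that $L$ is $|K|^+$-saturated.

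\emph{Step 2 (cores).} Let $\dot v,\dot w$ be the finest equicharacteristic $0$ coarsenings. By saturation, $(K\dot v,v^\circ)$ and $(L\dot w,w^\circ)$ are complete $\mathbb Z$-valued fields with initial ramification index $e$. Let $\varphi^\circ:\mathcal O_{v^\circ,n}\to\mathcal O_{w^\circ,n}$ be the map corresponding to $\varphi$ under the isomorphisms of Fact~\ref{fact:coarsening and higher residue ring}. Theorem~\ref{thm: approximate lifting} gives an isometric embedding $\Phi^\circ:K\dot v\to L\dot w$ with $\res_k(\Phi^\circ)=\res^n_k(\varphi^\circ)$.

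\emph{Step 3 (reassembly).} I repeat the second half of the proof of Lemma~\ref{lem:embedding lemma over substr} with trivial base, so no condition over $K_0$ is needed. First choose a cross-section $s$ of $vK$ in $K$ and a partial cross-section $t$ on $\varphi_\Gamma(vK)$ in $L$, using saturation as in \cite[Lemmas~7.9 and 7.10]{vdD14}. These define
$$\psi:K^\times/(1+\mathfrak m_{\dot v})\to L^\times/(1+\mathfrak m_{\dot w}),\qquad u\,\overline{s(\gamma)}\mapsto \Phi^\circ(u)\,\overline{t(\varphi_\Gamma(\gamma))}.$$
One must check that $\psi$ restricts to $\Phi^\circ$ on $(K\dot v)^\times$. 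For this, arrange that $s$ and $t$ send the least positive element $\epsilon$ of $vK$ to uniformizers whose images correspond under $\Phi^\circ$, and then adjust $\psi$ on $\epsilon$ accordingly. Then \cite[Theorem~2.1]{Bas91} in the form \cite[Th\'eor\`eme~4.2]{Bel99} lifts this morphism of co-structures to an embedding $(K,\dot v)\to(L,\dot w)$. This embedding induces $\varphi_\Gamma$, and on cores it induces $\Phi^\circ$, hence $\res_k(\varphi)$ by Fact~\ref{fact:coarsening and higher residue ring}.

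I expect the main obstacle to be Step 3, specifically the compatibility of $\psi$ with $\Phi^\circ$ on the core. The uniformizer $\Phi^\circ(\pi^\circ)$ need not be the image under $t$ of $\varphi_\Gamma(\epsilon)$, so $t(\varphi_\Gamma(\epsilon))$ must be chosen to be a lift of $\Phi^\circ(\pi^\circ)$ before $t$ is extended to the rest of $\varphi_\Gamma(vK)$. I would first try to build $t$ by first fixing it on the convex subgroup $\Gamma^\circ$ in this way and then extending it by saturation.
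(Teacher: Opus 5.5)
Your proof is correct and follows the paper's route: saturate, lift on the cores via Theorem~\ref{thm: approximate lifting} (using $k\le n-(e-1+ev_p(e))$), reassemble with the Basarab--B\'elair co-structure embedding, and pull back to $L$ by $|K|^+$-saturation. The only difference is in the reassembly. The paper uses the sections of Remark~\ref{rem:coarsening and section} to make the core a common valued subfield $K_0^*$ and applies Lemma~\ref{lem:embedding lemma over substr} over it, so compatibility of $\psi$ with $\Phi^\circ$ at the uniformizer is automatic; you instead inline that construction over a trivial base, prescribe $t(\varphi_\Gamma(\epsilon))$ to be a lift of $\Phi^\circ(\pi^\circ)$, and extend $t$ from the convex subgroup $\mathbb Z\varphi_\Gamma(\epsilon)$ (the quotient is torsion-free), which works and avoids the section maps.
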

\begin{proof}
We follow the construction in the proof of \cite[Lemma~4.13]{ADJ24}. For completeness, we give a detailed proof using Theorem~\ref{thm: approximate lifting} instead of \cite[Lemma~4.6]{ADJ24}.

Let $(K^*,v^*)$, $(L^*,w^*)$, $\varphi^*:\mathcal O_{v^*,n}\rightarrow \mathcal O_{w^*,n}$, and $\varphi_{\Gamma}^*:v^*K^*\rightarrow w^*L^*$ be elementary extensions of $(K,v)$, $(L,w)$, $\varphi$, and $\varphi_{\Gamma}$ such that $(K^*,v^*)$ and $(L^*,w^*)$ are $\aleph_1$-saturated. The assumption $ n\ge \mr {max}\{k+e-1+ev_p(e), N_p(e) \}$ implies that  $n\ge N_p(e)$ and $k\le n-(e-1+e v_p(e))$. Since $(K^*,v^*)$ and $(L^*,w^*)$ are $\aleph_1$-saturated, by Theorem~\ref{thm: approximate lifting}, the homomorphism $\res_k(\varphi^*):\mathcal O_{v^*,k}\rightarrow \mathcal O_{w^*,k}$ is induced by an $\mathcal L_{val}$-embedding $(K^*\dot v^*,(v^*)^{\circ})\hookrightarrow (L^*\dot w^*,(w^*)^{\circ})$.

By Remark \ref{rem:coarsening and section}, via section maps of $\res_{\dot v^*}$ and $\res_{\dot w^*}$, we consider $K^*\dot v^*$ and $L^*\dot w^*$ as valued subfields of $(K^*,v^*)$ and $(L^*,w^*)$, which have the same residue rings as $(K^*,v^*)$ and $(L^*,w^*)$, respectively and we have that $K^*v^*=K^*\dot v^*(v^*)^{\circ}$ and $(v^*)^{\circ}K^*\dot v^*$ is the   smallest nontrivial  convex subgroup of $v^*K^*$.
Also, via the embedding $(K^*\dot v^*,(v^*)^{\circ})\hookrightarrow (L^*\dot w^*,(w^*)^{\circ})$, we consider $K^*\dot v^*$ as a valued subfield of $(L^*,w^*)$. 
Therefore, for $(K_0^*,v_0^*):=(K^*\dot v^*,(v^*)^{\circ})$, we have
\begin{itemize}
    \item $K^*v^*/K_0^*v_0^*$ is separable and $v^*K^*/v_0^*K_0^*$ is torsion-free; and
    \item $\varphi_{\Gamma}^*:v^*K^*\rightarrow w^*L^*$ is over $v_0^* K_0^*$,
\end{itemize}
and since  $\mathcal O_{v^*,k}=\mathcal O_{v_0^*,k}$, $\mathcal O_{w^*,k}=\mathcal O_{(w^{*})^{\circ},k}$, and the homomorphism $\res_k(\varphi^*):\mathcal O_{v^*,k}\rightarrow \mathcal O_{w^*,k}$ is induced by the embedding $(K^*\dot v^*,(v^*)^{\circ})\hookrightarrow (L^*\dot w^*,(w^*)^{\circ})$, we have
\begin{itemize}
    \item  $\res_k(\varphi^*):\mathcal O_{v^*,k}\rightarrow \mathcal O_{w^*,k}$ is over $\mathcal O_{v_0^*,k}$.
\end{itemize}
After replacing by elementary extensions, we may assume that $K^*$, $K_0^*$, and $L^*$ are saturated enough to apply Lemma \ref{lem:embedding lemma over substr} with the maps $\res_k(\varphi^*)$ and $\varphi_{\Gamma}^*$ and so there is an $\mathcal L_{val}$-embedding $K^*\hookrightarrow L^*$ inducing $\res_k(\varphi^*)$ and $\varphi_{\Gamma}^*$.

Then, the composite embedding $(K,v)\hookrightarrow (K^*,v^*)\hookrightarrow (L^*,w^*)$ induces the map $vK\hookrightarrow wL\hookrightarrow w^*L^*$ on the value groups and the map $\mathcal O_{v,k}\hookrightarrow \mathcal O_{w,k}\hookrightarrow \mathcal O_{w^*,k}$ on the residue rings, coming from   $\varphi_{\Gamma}$  and $\res_k(\varphi)$   respectively. 
This implies that $(K,v)$ can be embedded into the elementary extension $(L^*,w^*)$ of $(L,w)$ as $\mathcal L_{val}(vK,\mathcal O_{v,k})$-structures and so by the $|K|^+$-saturation of $(L,w)$, $(K,v)$ can be $\mathcal L_{val}(vK,\mathcal O_{v,k})$-embedded into $(L,w)$.
\end{proof}

\subsection{Relative completeness}\label{subsect:relative completeness}
By combining Remark \ref{rem:interpretability of residue field} and \cite[Theorem 5.1]{ADJ24}, we first prove the relative model completeness via the residue rings.
\begin{theorem}\label{thm:relative model completeness}
Let $(K,v)\subseteq (L,w)$ be two finitely ramified henselian valued fields of the same initial ramification index $e$. Let $k$ be a positive integer. 
Suppose $\mathcal O_{v,k}\preceq \mathcal O_{w,k}$ and $vK\preceq wL$ in $\mathcal L_{ring}$ and $\mathcal L_{oag}$ respectively. 
Then, $(K,v)\preceq (L,w)$ in $\mathcal L_{val}$.
\end{theorem}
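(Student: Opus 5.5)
The plan is to reduce the statement to the relative model completeness theorem of Anscombe, Dittmann and Jahnke, \cite[Theorem~5.1]{ADJ24}. That theorem says: for an extension $(K,v)\subseteq(L,w)$ of finitely ramified henselian valued fields with the same initial ramification index, if $\mathcal O_{v,m}\preceq\mathcal O_{w,m}$ in $\mathcal L_{ring}$ for all $m$ (or for all sufficiently large $m$) and $vK\preceq wL$ in $\mathcal L_{oag}$, then $(K,v)\preceq(L,w)$. The language there is essentially our $\mathcal L_{val}$, or bi-interpretable with it via Fact~\ref{fact:definability of valuation ring}. The value group hypothesis is given directly. So the work is to upgrade $\mathcal O_{v,k}\preceq\mathcal O_{w,k}$ to $\mathcal O_{v,m}\preceq\mathcal O_{w,m}$ for the relevant levels $m$, or else to see that the cited theorem only needs the residue field together with one fixed length.

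The first step treats the residue field, using Remark~\ref{rem:interpretability of residue field}. The residue field $Kv$ is interpreted in $\mathcal O_{v,k}$ by a parameter-free interpretation, namely the quotient of $\mathcal O_{v,k}$ by $E(x,y)\equiv (x-y)^k=0$. The same formulas interpret $Lw$ in $\mathcal O_{w,k}$. Parameter-free interpretations preserve elementary extensions, so $Kv\preceq Lw$ in $\mathcal L_{ring}$. Residue field plus value group is what \cite[Theorem~5.1]{ADJ24} actually consumes in the appropriate form. If that theorem is phrased with the residue rings $\mathcal O_{v,m}$ for large $m$, I would instead go back to its proof, which passes to saturated elementary extensions and uses an embedding lemma. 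There I would replace the embedding lemma by Lemma~\ref{lem:embedding lemma over substr}, applied with $n=k$ and $K_0=K$.

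The fallback argument runs as follows. Take $\aleph_1$-saturated (or suitably saturated) elementary extensions $(K^*,v^*)\preceq(L^*,w^*)$ arranged so that $\mathcal O_{v^*,k}\preceq\mathcal O_{w^*,k}$ and $v^*K^*\preceq w^*L^*$ still hold. By Remark~\ref{rem:separability of residue field via higer residue ring}, $\mathcal O_{v,k}\preceq_\exists\mathcal O_{w,k}$ gives that $Lw/Kv$ is separable. Elementarity of the value groups gives that $wL/vK$ is torsion-free, since $vK$ is pure in $wL$. These are exactly the hypotheses of Lemma~\ref{lem:embedding lemma over substr} with base $K_0=K$. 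Then run the standard back-and-forth. Given a saturated elementary extension $L'$ of $K$ in which $\mathcal O_{w,k}$ and $wL$ embed elementarily over $\mathcal O_{v,k}$ and $vK$, Lemma~\ref{lem:embedding lemma over substr} produces an $\mathcal L_{val}$-embedding $L\to L'$ over $K$. From such embeddings one concludes $(K,v)\preceq(L,w)$ by the usual criterion: every existential formula with parameters in $K$ true in $L$ is true in an elementary extension of $K$. Iterating this gives full elementarity via a Robinson-type test.

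The main obstacle is the residue ring sorts of every length $m$ in $\mathcal L_{val}$. The embedding must respect all $\mathcal O_m$, not only $\mathcal O_k$. Since an $\mathcal L_{val}$-embedding of fields automatically induces maps on every $\mathcal O_m$, this is fine for the embedding itself. However, the hypothesis only controls level $k$, so I must check that no extra condition at higher $m$ is needed. That is exactly the point of Lemma~\ref{lem:embedding lemma over substr}: it works for arbitrary $n$ because the base $K_0$ already fixes the structure of the core.
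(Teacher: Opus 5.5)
Your main line is exactly the paper's proof. Remark~\ref{rem:interpretability of residue field} gives $Kv\preceq Lw$, and then \cite[Theorem~5.1]{ADJ24} yields elementarity in the ring language; that theorem is indeed stated with the residue field and value group, so your hedge about residue rings of large length is unnecessary. The parameter-free definability of the valuation ring (Fact~\ref{fact:definability of valuation ring}) then transfers this to the multi-sorted $\mathcal L_{val}$, and your fallback via Lemma~\ref{lem:embedding lemma over substr} with an alternating-chain argument would also work but is not needed.
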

\begin{proof}
By Remark \ref{rem:interpretability of residue field}, we have that $Kv\preceq Lw$ in $\mathcal L_{ring}$.
So, by \cite[Theorem 5.1]{ADJ24}, we have $(K,v)\preceq (L,w)$ in $\mathcal L_{ring}$ and since each sort $\mathcal O_n$ and maps $\res^m_n$, $\res_n$ are  interpretable in the language $\mathcal L_{ring}$ by Fact \ref{fact:definability of valuation ring}, $(K,v)\preceq (L,w)$ in $\mathcal L_{val}$.
\end{proof}

Using Lemma~\ref{lem:relative embedding absolute} and Theorem~\ref{thm:relative model completeness}, we provide a description of types of elements of the residue rings and value groups, analogous to \cite[Theorem 5.2]{ADJ24}.
\begin{theorem}\label{thm:relative completeness}
Let $(K,v)$ and $(L,w)$ be finitely ramified henselian valued fields of mixed characteristic $(0,p)$ having the same initial ramification index $e\ge 1$.
Let $k\ge 1 $  and 
$n\ge \mr {max}\{k+e-1+ev_p(e), N_p(e) \}.$
Let $\alpha$ and $\beta$ be finite tuples of the same length  in  $\mathcal O_{v,k}$ and $\mathcal O_{w,k}$ and let $\zeta$ and $\xi$ be finite tuples of the same length in $vK$ and $wL$, respectively.
Then, the following are equivalent:
\begin{enumerate}
	\item $\tp_{(K,v)}^{\mathcal L_{val}}(\alpha,\zeta)=\tp_{(L,w)}^{\mathcal L_{val}}(\beta,\xi)$.
	\item $\alpha$ and $\beta$ satisfy the same special $\mathcal O_k$-formulas  at level $n$, and $\tp_{vK}^{\mathcal L_{oag}}(\zeta)=\tp_{wL}^{\mathcal L_{oag}}(\xi)$.
	\item 
   There are elementary extensions $(K', v') \succeq (K, v)$ and $(L', w')\succeq (L, w)$, and tuples $\alpha'$  and $\beta'$ in $\mathcal O_{v',n}$ and $\mathcal O_{w',n}$, respectively, such that 
    $$\tp_{\mathcal O_{v',n}}^{\mathcal L_{ring}}(\alpha')=\tp_{\mathcal O_{w',n}}^{\mathcal L_{ring}}(\beta'),\  \ \res_k^n(\alpha')=\alpha,\ \ \res_k^n(\beta')=\beta,$$ 
	and $\tp_{vK}^{\mathcal L_{oag}}(\zeta)=\tp_{wL}^{\mathcal L_{oag}}(\xi)$.
     
\end{enumerate}
\end{theorem}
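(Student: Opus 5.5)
We prove the cycle (i)$\Rightarrow$(ii)$\Rightarrow$(iii)$\Rightarrow$(i). The implication (i)$\Rightarrow$(ii) is immediate. A special $\mathcal O_k$-formula at level $n$ is an $\mathcal L_{val}$-formula in the variables $x$. The value group sort with its $\mathcal L_{oag}$-structure is part of $\mathcal L_{val}$. So equality of $\mathcal L_{val}$-types of $(\alpha,\zeta)$ and $(\beta,\xi)$ yields both conditions in (ii).

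For (ii)$\Rightarrow$(iii) we use compactness. Consider the set of $\mathcal L_{ring}$-formulas $\theta'(x')$ with $\mathcal O_{v,n}$-variables $x'$ such that $(K,v)\models\exists x'(\theta'(x')\wedge\res^n_k(x')=\alpha)$. By Remark~\ref{rem:special formula under boolean combination}, for every $\theta'$ either $\res^n_k(\theta')$ or $\res^n_k(\neg\theta')$ holds of $\alpha$. We want a complete $\mathcal L_{ring}$-type $q(x')$ over $\emptyset$ in the sort $\mathcal O_n$ such that $q(x')\cup\{\res^n_k(x')=\alpha\}$ is consistent with the elementary diagram of $(K,v)$, and $q(x')\cup\{\res^n_k(x')=\beta\}$ is consistent with that of $(L,w)$. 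First take any $\alpha'$ realizing the fibre over $\alpha$ in an elementary extension, and let $q=\tp^{\mathcal L_{ring}}(\alpha')$. Every finite conjunction $\theta'$ of formulas in $q$ gives a special formula $\res^n_k(\theta')$ true of $\alpha$, hence true of $\beta$ by (ii). So $q(x')\cup\{\res^n_k(x')=\beta\}$ is finitely satisfiable in $(L,w)$. Compactness then gives $(L',w')\succeq(L,w)$ and $\beta'$ realizing it. The value-group part carries over unchanged.

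The main work is (iii)$\Rightarrow$(i), which is a back-and-forth via Lemma~\ref{lem:relative embedding absolute}. Since $(K',v')\succeq(K,v)$, it suffices to compare types in $K'$ and $L'$, so we may rename and assume $\alpha'\in\mathcal O_{v,n}$ and $\beta'\in\mathcal O_{w,n}$. Pass to elementary extensions $(K^*,v^*)$ and $(L^*,w^*)$ that are saturated of the same large cardinality and equal to their own theories' saturated models.

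The key step is to find a countable, or small, elementary substructure $(K_1,v_1)\preceq(K^*,v^*)$ containing the data, together with an $\mathcal L_{ring}$-elementary map $\varphi:\mathcal O_{v_1,n}\to\mathcal O_{w^*,n}$ with $\varphi(\alpha')=\beta'$, and an elementary $\mathcal L_{oag}$-embedding $\varphi_\Gamma:v_1K_1\to w^*L^*$ with $\zeta\mapsto\xi$. The map $\varphi$ exists by saturation of $\mathcal O_{w^*,n}$, since $\tp(\alpha')=\tp(\beta')$. For $\varphi_\Gamma$ we also need the minimal positive element to go to the minimal positive element. This holds because that element is $\emptyset$-definable in $\mathcal L_{oag}$ and $\varphi_\Gamma$ is elementary.

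Lemma~\ref{lem:relative embedding absolute} then gives an $\mathcal L_{val}$-embedding $\Phi:K_1\to L^*$ inducing $\res_k(\varphi)$ and $\varphi_\Gamma$. In particular $\Phi$ sends $(\alpha,\zeta)\mapsto(\beta,\xi)$. The image satisfies $\mathcal O_{\Phi K_1,k}\preceq\mathcal O_{w^*,k}$, because $\res_k$ of an elementary map on $\mathcal O_n$ is elementary: $\mathcal O_k$ is interpretable in $\mathcal O_n$ via Remark~\ref{rem:kernel_speical formula}. Likewise $v(\Phi K_1)\preceq w^*L^*$. Theorem~\ref{thm:relative model completeness} then yields $\Phi(K_1)\preceq(L^*,w^*)$. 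Hence
\[\tp_{K}(\alpha,\zeta)=\tp_{K_1}(\alpha,\zeta)=\tp_{L^*}(\beta,\xi)=\tp_L(\beta,\xi).\]

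The main obstacle is the bookkeeping needed to get an elementary (not merely ring) map $\varphi$ on the whole of $\mathcal O_{v_1,n}$ extending $\alpha'\mapsto\beta'$, and to check that its level-$k$ reduction is elementary in $\mathcal L_{ring}$, so that Theorem~\ref{thm:relative model completeness} applies.
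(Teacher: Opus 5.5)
Your proposal is correct and follows the paper's proof: (ii)$\Rightarrow$(iii) by compactness on the $\mathcal L_{ring}$-type of a lift $\alpha'$, and (iii)$\Rightarrow$(i) by taking elementary maps $\varphi$ on $\mathcal O_n$ and $\varphi_\Gamma$, applying Lemma~\ref{lem:relative embedding absolute}, and concluding with Theorem~\ref{thm:relative model completeness}. You also make explicit two points the paper leaves implicit: $\varphi_\Gamma$ preserves the minimal positive element, and $\res_k(\varphi)$ is elementary because $\mathcal O_k$ is interpretable in $\mathcal O_n$. The detour through saturated models $K^*$, $L^*$ of a common cardinality and a small $K_1\preceq K^*$ is unnecessary, and such saturated models need not exist in general; as in the paper, it suffices to replace $(L,w)$ by a $|K|^+$-saturated elementary extension and take $K_1=K$.
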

\begin{proof}
It is clear that (i) $\Rightarrow $ (ii). 
For the direction (ii) $\Rightarrow$ (iii), suppose the condition (ii) holds. 
By taking elementary extensions properly, we may assume that $(K,v)$ and $(L,w)$ are $\aleph_1$-saturated. Then, given any $\alpha'\in \mathcal O_{v,n}$ with $\res^n_k(\alpha')=\alpha$, by (ii), compactness, and $\aleph_1$-saturation, there is a tuple $\beta'\in \mathcal O_{w,n}$ such that $\tp_{\mathcal O_{v,n}}^{\mathcal L_{ring}}(\alpha')=\tp_{\mathcal O_{w,n}}^{\mathcal L_{ring}}(\beta'),\ \res_k^n(\alpha')=\alpha,\ \res_k^n(\beta')=\beta$.\\

Now, we will show the direction (iii) $\Rightarrow$  (i).
Without loss of generality, we may assume that $K=K'$ and $L=L'$. Suppose there are tuples $\alpha'$ and $\beta'$ in $\mathcal O_{v,n}$ and $\mathcal O_{w,n}$ respectively such that $$\tp_{\mathcal O_{v,n}}^{\mathcal L_{ring}}(\alpha')=\tp_{\mathcal O_{w,n}}^{\mathcal L_{ring}}(\beta'),\ \res_k^n(\alpha')=\alpha,\ \res_k^n(\beta')=\beta,$$ 
and $\tp_{vK}^{\mathcal L_{oag}}(\zeta)=\tp_{wL}^{\mathcal L_{oag}}(\xi)$. After replacing $(L,w)$ by a $|K|^+$-saturated elementary extension, we may suppose that we have an elementary $\mathcal L_{ring}$-embedding $\varphi:\mathcal O_{v,n}\rightarrow \mathcal O_{w,n}$ sending $\alpha'$ to $\beta'$ and an elementary $\mathcal L_{oag}$-embedding $\varphi_{\Gamma}:vK\rightarrow wL$ sending $\zeta$ to $\xi$. By Lemma~\ref{lem:relative embedding absolute}, there exists an $\mathcal L_{val}$-embedding $(K,v)\hookrightarrow (L,w)$ inducing $\res_k(\varphi)$ and $\varphi_{\Gamma}$ and by Theorem~\ref{thm:relative model completeness} this embedding is elementary. In particular, the embedding $\res_k(\varphi)$ sends $\alpha$ to $\beta$ and so we have $\tp_{(K,v)}^{\mathcal L_{val}}(\alpha,\zeta)=\tp_{(L,w)}^{\mathcal L_{val}}(\beta,\xi)$.
\end{proof}

\begin{corollary}\label{cor:relative complete}
Let $(K,v)$ and $(L,w)$ be finitely ramified henselian valued fields of mixed characteristic $(0,p)$ having the same initial ramification index $e$.
Put $n\ge N_p(e)$. 
Then, $$(K,v)\equiv (L,w)\  \ \Longleftrightarrow \ \ vK\equiv wL,\ \mathcal O_{v,n}\equiv \mathcal O_{w,n}.$$
Moreover, $N_p(e)$ is the smallest integer such that the equivalence  holds   for all models of $T_{p,e}$.
\end{corollary}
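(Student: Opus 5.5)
The plan has two parts: the equivalence for every $n\ge N_p(e)$, and the optimality of $N_p(e)$.

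\emph{The equivalence.} The forward direction is immediate. The sort $\mathcal O_n$ with its ring structure, and the value group sort, are parts of the $\mathcal L_{val}$-structure, so $\mathcal L_{val}$-elementary equivalence restricts to $\mathcal L_{ring}$-elementary equivalence of $\mathcal O_{v,n}$ and $\mathcal O_{w,n}$ and to $\mathcal L_{oag}$-elementary equivalence of $vK$ and $wL$.

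For the converse I apply Theorem~\ref{thm:relative completeness} with $k=1$ and empty tuples $\alpha,\beta,\zeta,\xi$. That theorem needs a level $n_1\ge\max\{1+e-1+ev_p(e),N_p(e)\}=\max\{e+ev_p(e),N_p(e)\}$. By Theorem~\ref{thm: lifting number}, $N_p(e)\ge e+ev_p(e)$ in every case:
\begin{itemize}
\item if $e=1$, then $N_p(e)=1=e$;
\item if $p\nmid e$ and $e>1$, then $N_p(e)=e+1>e$;
\item if $p\mid e$, then $N_p(e)=e(1+v_p(e))+\lfloor e/(p-1)\rfloor+1$.
\end{itemize}
Hence $\max\{e+ev_p(e),N_p(e)\}=N_p(e)\le n$, and I can take $n_1=n$.

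It remains to verify condition (iii) of Theorem~\ref{thm:relative completeness} with empty tuples. Since $\mathcal O_{v,n}\equiv\mathcal O_{w,n}$, the empty tuples have the same $\mathcal L_{ring}$-type, and condition (iii) holds with $K'=K$ and $L'=L$. The value group hypothesis $vK\equiv wL$ gives equality of the $\mathcal L_{oag}$-types of the empty tuples. Theorem~\ref{thm:relative completeness} then yields $(K,v)\equiv(L,w)$.

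\emph{Optimality.} Take $n<N_p(e)$; this forces $e>1$. I use the pairs from Proposition~\ref{prop:tame-pair} (when $p\nmid e$) and Proposition~\ref{prop: wild nondivisible case} (when $p\mid e$). These are complete discretely valued fields, so both value groups are $\Z$ and $vK\equiv wL$. In each case the residue rings of length $N_p(e)-1\ge n$ are isomorphic:
\begin{itemize}
\item when $p\nmid e$, Proposition~\ref{prop:tame-pair} gives isomorphic residue rings of length $e=N_p(e)-1$;
\item when $p\mid e$, Proposition~\ref{prop: wild nondivisible case} gives isomorphic residue rings of length $n_0=N_p(e)-1$.
\end{itemize}
Reducing modulo $\mathfrak m^n$ gives $\mathcal O_{v,n}\cong\mathcal O_{w,n}$, hence $\mathcal O_{v,n}\equiv\mathcal O_{w,n}$.

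The fields themselves are not elementarily equivalent. Each proposition exhibits a polynomial with a root in one field and none in the other: $X^e+p$ in the tame case, and in the wild case $f=X^e+p$, which has a root $\pi_1\in K_1$ and none in $K_2$. The statement "this polynomial has a root" is an $\mathcal L_{ring}$-sentence with integer coefficients. (When $p=2$, $g$ is not monic, but I only need $f$.) So the equivalence fails at level $n$ for these models of $T_{p,e}$, and $N_p(e)$ is optimal.

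The substantive content is carried by the earlier results. The only delicate step is the arithmetic check $N_p(e)\ge e+ev_p(e)$, which allows $k=1$ at level $n$ and is done above.
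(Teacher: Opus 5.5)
Your proposal is correct and follows the paper's proof. You apply Theorem~\ref{thm:relative completeness} with $k=1$ and empty tuples, where $N_p(e)\ge e+ev_p(e)$ makes the required level just $N_p(e)\le n$, and you get optimality from the pairs in Propositions~\ref{prop:tame-pair} and \ref{prop: wild nondivisible case}. You also spell out details the paper leaves implicit: the arithmetic check, reducing the length-$(N_p(e)-1)$ isomorphism to every smaller length, and the separating sentence $\exists y\,(y^e+p=0)$.
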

\begin{proof}
It is enough to show the right-to-left direction. Suppose $vK\equiv wL$ and $\mathcal O_{v,n}\equiv \mathcal O_{w,n}$. If we let $k=1$ in Theorem~\ref{thm:relative completeness}, we get $\mr {max}\{k+e-1+ev_p(e), N_p(e) \}=N_p(e)$. By Theorem \ref{thm:relative completeness} with $k:=1$ and empty tuples $\alpha$ and $\beta$, $\tp_{(K,v)}^{\mathcal L_{val}}(\alpha)=\tp_{(L,w)}^{\mathcal L_{val}}(\beta)$ and so $(K,v)\equiv (L,w)$. The minimality of $N_p(e)$ follows from the examples constructed in Propositions~\ref{prop:tame-pair} and \ref{prop: wild nondivisible case}.
\end{proof}

\begin{corollary}\label{cor:transfer of decidability of full theories}
Let $(K,v)$ be a finitely ramified henselian valued field of mixed characteristic $(0,p)$ having the initial ramification index $e$. 
Put $n=N_p(e)$. 
Then, $\Th^{\mathcal L_{val}}(K,v)$ is decidable if and only if $\Th^{\mathcal L_{oag}}(vK)$ and $\Th^{\mathcal L_{ring}}(\mathcal O_{v,n})$ are decidable.
\end{corollary}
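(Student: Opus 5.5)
The plan is to use Corollary~\ref{cor:relative complete} to reduce decidability of $\Th^{\mathcal L_{val}}(K,v)$ to that of the two side theories, and to prove the converse by interpretation.

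\emph{Left-to-right.} We show the side theories are interpretable in $(K,v)$ and then use the easy direction.
\begin{itemize}
\item Both $\mathcal O_{v,n}$ (as the sort $\mathcal O_n$ with its ring structure) and $vK$ (as the sort $\Gamma$, with $\infty$ removed) are parameter-free definable reducts of $(K,v)$ in $\mathcal L_{val}$.
\item Any $\mathcal L_{ring}$-sentence about $\mathcal O_{v,n}$, or $\mathcal L_{oag}$-sentence about $vK$, translates effectively into an $\mathcal L_{val}$-sentence about $(K,v)$, by relativizing quantifiers to the appropriate sort.
\item Hence a decision procedure for $\Th^{\mathcal L_{val}}(K,v)$ yields decision procedures for both side theories.
\end{itemize}

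\emph{Right-to-left.} We use the standard completeness-plus-enumeration argument. Take the recursively axiomatized $\mathcal L_{val}$-theory
$$T^*:=T_{p,e}\cup\{\sigma^{\Gamma}:\sigma\in\Th^{\mathcal L_{oag}}(vK)\}\cup\{\tau^{\mathcal O_n}:\tau\in\Th^{\mathcal L_{ring}}(\mathcal O_{v,n})\}.$$
Here the superscripts denote relativization to the corresponding sort.
\begin{itemize}
\item $T_{p,e}$ is recursively axiomatizable. It consists of the field axioms, henselianity (one axiom scheme per polynomial degree), characteristic $0$, and $v(p)$ being the $e$-th positive element with nothing in between. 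It also includes the recursive axioms making the sorts $\mathcal O_m$, the maps $\res_m$, and the maps $\res^m_{m'}$ behave as intended.
\item The added parts are decidable, hence recursively enumerable, by hypothesis.
\end{itemize}

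By Corollary~\ref{cor:relative complete} with $n=N_p(e)$, any two models of $T^*$ are elementarily equivalent. Hence $T^*$ is complete, and since $(K,v)\models T^*$, we get $T^*\vdash\Th^{\mathcal L_{val}}(K,v)$. A complete recursively enumerable theory is decidable: given a sentence $\chi$, enumerate the consequences of $T^*$ until either $\chi$ or $\neg\chi$ appears.

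\emph{Main obstacle.} The only delicate point is verifying that $T_{p,e}$, in the multi-sorted language, really is recursively axiomatized. In particular we must make sure the sorts $\mathcal O_m$ are pinned down by first-order axioms: for example, $\res_m$ is surjective onto $\mathcal O_m$ with kernel $\mathfrak m^m$, where the maximal ideal is expressed using the valuation. All of these are explicit schemes, so the obstacle is bookkeeping rather than substance.
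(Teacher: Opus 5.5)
Your proof is correct and follows the paper's argument. The paper likewise uses Corollary~\ref{cor:relative complete} and the computable axiomatizability of $T_{p,e}$ to show that $T_{p,e}\cup\Th^{\mathcal L_{oag}}(vK)\cup\Th^{\mathcal L_{ring}}(\mathcal O_{v,n})$ and $T_{p,e}\cup\Th^{\mathcal L_{val}}(K,v)$ are mutually entailing; you additionally spell out the two standard steps it leaves implicit, namely relativization to the sorts for the forward direction and completeness plus proof enumeration for the converse.
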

\begin{proof}
Let $T_{p,e}$ be the $\mathcal L_{val}$-theory of all finitely ramified henselian valued fields of mixed characteristic $(0,p)$ having the initial ramification index $e$, which is computably axiomatizable. Let $T_0:=T_{p,e}\cup \Th^{\mathcal L_{val}}(K,v)$ and $T_1:=T_{p,e}\cup\Th^{\mathcal L_{oag}}(vK)\cup\Th^{\mathcal L_{ring}}(\mathcal O_{v,n})$. Then, by Corollary \ref{cor:relative complete}, $T_0\models T_1$ and $T_1\models T_0$. So, $\Th^{\mathcal L_{val}}(K,v)$ is decidable if and only if $\Th^{\mathcal L_{oag}}(vK)$ and $\Th^{\mathcal L_{ring}}(\mathcal O_{v,n})$ are decidable.
\end{proof}

\subsection{Relative existential completeness}\label{subsect:relative existential completness}
Given a tuple $\alpha$ of elements in $\mathcal O_{v,n}$, we write $\exists\mh\tp_{(K,v)}^{\mathcal L_{val}}(\alpha)$ for the existential type of $\alpha$ in $(K,v)$ and write $\exists^+\mh\tp_{\mathcal O_{v,n}}^{\mathcal L_{ring}}(\alpha)$ for the positive existential type.

\begin{theorem}\label{thm:relative existential completeness}
Let $(K,v)$ and $(L,w)$ be finitely ramified henselian valued fields of mixed characteristic $(0,p)$ having the same initial ramification index $e$. Let $k\ge 1$  and  $n\ge \mr {max}\{k+e-1+ev_p(e), N_p(e) \}$. Let $\alpha$ and $\beta$ be finite tuples of $\mathcal O_{v,k}$ and $\mathcal O_{w,k}$ having the same length respectively. 
Then, the following are equivalent:
\begin{enumerate}
	\item $\exists\mh\tp_{(K,v)}^{\mathcal L_{val}}(\alpha)\subseteq \exists\mh\tp_{(L,w)}^{\mathcal L_{val}}(\beta)$.
	\item Given an $\exists^+$-special $\mathcal O_k$-formula $\theta(x)$ at level $n$, 
    $$(K,v)\models \theta(\alpha)\ \Longrightarrow \  (L,w)\models \theta(\beta).$$
	\item 
      There are elementary extensions $(K', v') \succeq (K, v)$ and $(L', w')\succeq (L, w)$, and tuples $\alpha'$  and $\beta'$ in $\mathcal O_{v',n}$ and $\mathcal O_{w',n}$, respectively, such that 
    $$\exists^+\mh\tp_{\mathcal O_{v',n}}^{\mathcal L_{ring}}(\alpha') \subset  \exists^+\mh\tp_{\mathcal O_{w',n}}^{\mathcal L_{ring}}(\beta'),\  \ \res_k^n(\alpha')=\alpha,\ \ \res_k^n(\beta')=\beta.$$ 
\end{enumerate} 
\end{theorem}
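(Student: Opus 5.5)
The plan is to follow the scheme of Theorem~\ref{thm:relative completeness}, replacing elementary maps by homomorphisms that preserve positive existential formulas, and replacing Theorem~\ref{thm:relative model completeness} by the observation that any $\mathcal L_{val}$-embedding preserves existential formulas.

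(i)~$\Rightarrow$~(ii) is immediate. An $\exists^+$-special $\mathcal O_k$-formula $\exists x'(\theta'(x')\wedge\res^n_k(x')=x)$ with $\theta'$ positive existential is an existential $\mathcal L_{val}$-formula in $x$. Hence it belongs to $\exists\mh\tp_{(K,v)}^{\mathcal L_{val}}(\alpha)$ whenever it holds of $\alpha$.

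(ii)~$\Rightarrow$~(iii). Pass to $\aleph_1$-saturated elementary extensions and fix any lift $\alpha'\in\mathcal O_{v,n}$ of $\alpha$. Consider the partial type
$$\Sigma(y)=\{\theta'(y):\theta'\in\exists^+\mh\tp_{\mathcal O_{v,n}}^{\mathcal L_{ring}}(\alpha')\}\cup\{\res^n_k(y)=\beta\}$$
over $\beta$. A finite subset of $\Sigma$ is implied by a single positive existential $\theta'$, since conjunctions of positive existential formulas are again positive existential. Then $(K,v)\models\exists x'(\theta'(x')\wedge\res^n_k(x')=\alpha)$, witnessed by $\alpha'$. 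By (ii), the same special formula holds of $\beta$ in $(L,w)$, so $\Sigma$ is finitely satisfiable. Since $\Sigma$ is countable, $\aleph_1$-saturation gives a realization $\beta'$, and $\alpha',\beta'$ witness (iii).

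(iii)~$\Rightarrow$~(i) is the main step. Rename $K',L'$ as $K,L$, and replace $(L,w)$ by a $|K|^+$-saturated elementary extension. This does not change existential types of $\beta$.
\begin{enumerate}
\item First build a homomorphism $\varphi:\mathcal O_{v,n}\to\mathcal O_{w,n}$ with $\varphi(\alpha')=\beta'$. By the standard diagram/compactness argument, the positive existential diagram of $\mathcal O_{v,n}$ with $\alpha'$ named is finitely satisfiable in $\mathcal O_{w,n}$ with $\alpha'\mapsto\beta'$. Indeed, any finite piece $\theta'(\alpha',c)$ gives $\exists z\,\theta'(\alpha',z)\in\exists^+\mh\tp(\alpha')\subseteq\exists^+\mh\tp(\beta')$. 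Saturation of $\mathcal O_{w,n}$, which is interpretable in $(L,w)$, then realizes the diagram, and a realization of the atomic positive diagram is exactly a ring homomorphism. It need not be injective, which is fine: Lemma~\ref{lem:relative embedding absolute} only needs a homomorphism.
\item Next we need $\varphi_\Gamma:vK\to wL$ preserving the minimal positive element. No value-group hypothesis appears in (iii), so take $\varphi_\Gamma$ to be the identity on $(vK)^\circ\cong\mathbb Z$. One way to realize this is to replace $(K,v)$ by its core: via Remark~\ref{rem:coarsening and section}, $(K\dot v,v^\circ)$ is an $\mathcal L_{val}$-substructure with the same residue rings and value group $\mathbb Z$. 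Since it is complete, Theorem~\ref{thm: approximate lifting} directly gives an embedding $K\dot v\to L\dot w\subseteq L$ inducing $\res_k(\varphi)$.
\end{enumerate}
The obstacle is that an existential formula true of $\alpha$ in $K$ is witnessed in $K$, not in the core. To handle this, apply Lemma~\ref{lem:relative embedding absolute} to all of $K$ with some $\varphi_\Gamma$ extending $\mathbb Z\to\mathbb Z$. Such an embedding exists after enlarging $L$ by an elementary extension, for example by a further extension making $wL$ sufficiently saturated: there $vK$ embeds over the convex subgroup $\mathbb Z$ once the types of $vK$ over $\mathbb Z$ are realizable. I expect this is the point needing care. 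I would argue it using saturation of $wL$ together with the fact that the $\mathcal L_{oag}$-existential types relevant for embeddings are determined by divisibility data over $\mathbb Z$, which is fixed by $e$ via $v(p)=e$.

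The lemma then yields an $\mathcal L_{val}$-embedding $K\to L$ inducing $\res_k(\varphi)$, hence sending $\alpha\mapsto\beta$. Existential formulas transfer upward along embeddings, giving (i).
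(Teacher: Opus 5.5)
Your arguments for (i)$\Rightarrow$(ii)$\Rightarrow$(iii) are correct; the paper only says these are easy. Your route for (iii)$\Rightarrow$(i) through Lemma~\ref{lem:relative embedding absolute} also works, but the step you flag is left unproved, and the reason you sketch is not the right one. Neither $e$ nor $v(p)=e$ plays any role, and the issue involves the order, not only divisibility.

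What you need is the fact that $\mathbb Z\epsilon\preceq_{\exists}G$ in $\mathcal L_{oag}$ for every ordered abelian group $G$ with least positive element $\epsilon$. Granting this, every finite piece of the quantifier-free diagram of $vK$ (with $\epsilon_K$ named) holds in $\mathbb Z\epsilon_K$. Hence it holds in $\mathbb Z\epsilon_L\subseteq wL$ under $\epsilon_K\mapsto\epsilon_L$. The $|K|^+$-saturation you already imposed on $(L,w)$ then realizes the whole diagram, so no further extension is needed.

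To prove the fact, let $b$ witness an existential formula over $\mathbb Z\epsilon$ in $G$, and put $A=\langle\epsilon,b\rangle$. Since $\mathbb Z\epsilon$ is convex in $A$, and $A/\mathbb Z\epsilon$ is torsion-free and finitely generated (hence free), we get $A\cong (A/\mathbb Z\epsilon)\times\mathbb Z\epsilon$ lexicographically. Let $x_j$ be the finitely many positive elements of $A/\mathbb Z\epsilon$ arising from the terms of the formula. Gordan's alternative gives a homomorphism $h:A/\mathbb Z\epsilon\to\mathbb Z$ with all $h(x_j)>0$, because a nontrivial nonnegative combination of positive elements is never $0$. Then for $N\gg0$ the map $(x,m\epsilon)\mapsto Nh(x)+m$ is a homomorphism $A\to\mathbb Z$ over $\mathbb Z\epsilon$. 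It preserves the signs of all relevant terms, so the formula holds in $\mathbb Z$.

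The paper's proof stays with the cores, which is the approach you abandoned. It applies Theorem~\ref{thm: approximate lifting} to $(K\dot v,v^\circ)$ and $(L\dot w,w^\circ)$. It then disposes of exactly the obstacle you raised: the section embeddings of Remark~\ref{rem:coarsening and section} are existentially closed, i.e.\ $(K\dot v,v^\circ)\preceq_{\exists}(K,v)$ and likewise for $L$ (cf.\ \cite[Lemma~2.3]{AF16}). So the existential types of $\alpha$ and $\beta$ may be computed in the cores.

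That route is shorter. It never needs a value-group embedding, nor the Basarab--B\'elair machinery behind Lemmas~\ref{lem:embedding lemma over substr} and~\ref{lem:relative embedding absolute}. Your route instead produces an honest $\mathcal L_{val}$-embedding $K\to L$ with $\alpha\mapsto\beta$, parallel to the proof of Theorem~\ref{thm:relative completeness}.

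The two routes are closer than they look. Existential closedness of the core in $(K,v)$, read on the $\Gamma$-sort, already says $(vK)^\circ\preceq_{\exists}vK$. That is precisely the fact your route needs; the paper simply leaves it inside the citation.
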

\begin{proof}
Since $\exists^+$-special formulas are existentially $\mathcal L_{val}$-definable, it is not hard to see (i) $\Rightarrow$ (ii) $\Rightarrow$ (iii).

\medskip
It remains to prove the direction (iii) $\Rightarrow $ (i). 
Without loss of generality, we may assume $K=K'$ and $L=L'$. Suppose that there are tuples $\alpha'$ and $\beta'$ in $\mathcal O_{v,n}$ and $\mathcal O_{w,n}$ respectively such that $$\exists^+\mh\tp_{\mathcal O_{v,n}}^{\mathcal L_{ring}}(\alpha')\subseteq\exists^+\mh\tp_{\mathcal O_{w,n}}^{\mathcal L_{ring}}(\beta'),\ \res_k^n(\alpha')=\alpha,\ \res_k^n(\beta')=\beta.$$ 
We mimic the proof of \cite[Proposition 5.6]{ADJ24} using Theorem \ref{thm: approximate lifting} instead of \cite[Lemma 4.6]{ADJ24}.
After taking elementary extensions, we may assume that $(K,v)$ is $\aleph_1$-saturated and $(L,w)$ is $|K|^+$-saturated. Since $\exists^+\mh\tp_{\mathcal O_{v,n}}^{\mathcal L_{ring}}(\alpha')\subseteq\exists^+\mh\tp_{\mathcal O_{w,n}}^{\mathcal L_{ring}}(\beta')$, there is an $\mathcal L_{ring}$-homomorphism $\mathcal O_{v,n}\rightarrow \mathcal O_{w,n}$ sending $\alpha'$ to $\beta'$. 
Also, by saturation, $(K\dot v,v^{\circ})$ and $(L\dot w,w^{\circ})$ are complete $\mathbb Z$-valued fields of the same initial ramification index $e$.
By Fact \ref{fact:coarsening and higher residue ring}, the homomorphism from $\mathcal O_{v,n}\rightarrow \mathcal O_{w,n}$ induces a homomorphism $\mathcal O_{v^{\circ}, n}\rightarrow \mathcal O_{w^{\circ},n}$, and by Theorem \ref{thm: approximate lifting}, there is an embedding $K\dot v\rightarrow L\dot w$ inducing a homomorphism $\mathcal O_{v^{\circ},k}\rightarrow \mathcal O_{w^{\circ},k}$ sending $\alpha^{\circ}$ to $\beta^{\circ}$. 
So, we have that $\exists\mh\tp_{(K\dot v,v^{\circ})}^{\mathcal L_{val}}(\alpha^{\circ})
\subseteq \exists\mh\tp_{(L\dot w,w^{\circ})}^{\mathcal L_{val}}(\beta^{\circ})$.

By a standard argument (cf. \cite[Lemma~2.3]{AF16}), there are existentially closed embeddings $(K\dot v,v^{\circ})\rightarrow (K,v)$ with $\alpha^{\circ}\mapsto\alpha$ and $(L\dot w,w^{\circ})\rightarrow (L,w)$ with $\beta^{\circ}\mapsto \beta$ and so 
$$\exists\mh\tp_{(K\dot v,v^{\circ})}^{\mathcal L_{val}}(\alpha^{\circ})=\exists\mh\tp_{(K,v)}^{\mathcal L_{val}}(\alpha),\ \exists\mh\tp_{(L\dot w,w^{\circ})}^{\mathcal L_{val}}(\beta^{\circ})=\exists\mh\tp_{(L,w)}^{\mathcal L_{val}}(\beta).$$ 
Thus, we have
$$\exists\mh\tp_{(K,v)}^{\mathcal L_{val}}(\alpha)=\exists\mh\tp_{(K\dot v,v^{\circ})}^{\mathcal L_{val}}(\alpha^{\circ})\subseteq \exists\mh\tp_{(L\dot w,w^{\circ})}^{\mathcal L_{val}}(\beta^{\circ})=\exists\mh\tp_{(L,w)}^{\mathcal L_{val}}(\beta).$$
\end{proof}

\begin{corollary} \label{cor: relative existential}
Let $(K,v)$ and $(L,w)$ be finitely ramified henselian valued fields of mixed characteristic $(0,p)$ having the same initial ramification index $e$. Put $n= N_p(e)$. Then, 
$$\Th_{\exists}^{\mathcal L_{val}}(K,v)\subseteq \Th_{\exists}^{\mathcal L_{val}}(L,w) \ \Longleftrightarrow  \ \Th_{\exists^+}^{\mathcal L_{ring}}(\mathcal O_{v,n})\subseteq \Th_{\exists^+}^{\mathcal L_{ring}}(\mathcal O_{w,n}).$$
In particular, $(K,v) \equiv _{\exists} (L, w) \ \Longleftrightarrow \  \mc O_{v, n} \equiv _{\exists^+} \mc O_{w, n}$.
\end{corollary}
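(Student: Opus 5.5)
The plan is to derive this as the sentence-level case of Theorem~\ref{thm:relative existential completeness}, taking $k=1$ and empty tuples $\alpha,\beta$, and to handle separately the relation between positive existential sentences in $\mathcal O_{v,n}$ and $\exists^+$-special formulas at level $n$. For $k=1$ we have $\max\{1+e-1+ev_p(e),N_p(e)\}=\max\{e+ev_p(e),N_p(e)\}$. This equals $N_p(e)$ in every case. If $e=1$, then $e+ev_p(e)=1=N_p(e)$. If $e>1$ and $p\nmid e$, then $e<e+1$. If $p\mid e$, then $e(1+v_p(e))<e(1+v_p(e))+\lfloor e/(p-1)\rfloor+1$. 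So $n=N_p(e)$ is admissible in Theorem~\ref{thm:relative existential completeness}.

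\emph{Direction ($\Leftarrow$).} Assume $\Th_{\exists^+}^{\mathcal L_{ring}}(\mathcal O_{v,n})\subseteq \Th_{\exists^+}^{\mathcal L_{ring}}(\mathcal O_{w,n})$. We check condition (iii) of Theorem~\ref{thm:relative existential completeness} for empty tuples, taking $K'=K$, $L'=L$ and $\alpha',\beta'$ empty. The resulting condition is exactly the assumed inclusion of positive existential theories. The theorem then gives $\exists\mh\tp_{(K,v)}(\emptyset)\subseteq\exists\mh\tp_{(L,w)}(\emptyset)$, which is the inclusion of existential $\mathcal L_{val}$-theories. The one thing to verify is that the theorem's proof genuinely covers empty tuples. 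It only requires an $\mathcal L_{ring}$-homomorphism $\mathcal O_{v,n}\to\mathcal O_{w,n}$ after saturation, and such a homomorphism is obtained from the inclusion of positive existential diagrams by compactness. So empty tuples cause no difficulty.

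\emph{Direction ($\Rightarrow$).} Any positive existential $\mathcal L_{ring}$-sentence $\sigma$ about the sort $\mathcal O_n$ is itself an existential $\mathcal L_{val}$-sentence, with quantifiers ranging over the sort $\mathcal O_n$. Hence the inclusion of existential $\mathcal L_{val}$-theories gives the inclusion of positive existential theories of the $\mathcal O_n$ sorts directly.

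The ``in particular'' statement follows by applying both inclusions in each direction. I expect no real obstacle here. The only delicate points are the numerical check that $k=1$ makes the bound equal to $N_p(e)$, and confirming that ``existential $\mathcal L_{val}$-sentence'' allows quantification over the residue ring sorts, which it does in the multi-sorted language.
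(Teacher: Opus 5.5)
Your proposal is correct and follows the paper's proof, which specializes Theorem~\ref{thm:relative existential completeness} to $k=1$ with empty tuples $\alpha,\beta$; the identity $\max\{k+e-1+ev_p(e),N_p(e)\}=N_p(e)$ for $k=1$, which you verify case by case, is also used in the proof of Corollary~\ref{cor:relative complete}. Your direct argument for ($\Rightarrow$) is just the easy implication (i)$\Rightarrow$(ii) of that theorem written out, so the two proofs coincide.
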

\begin{proof}
By choosing $k=1$ and empty tuples $\alpha$ and $\beta$, it comes from Theorem \ref{thm:relative existential completeness}.
\end{proof}

\begin{theorem}\label{thm:Hilbert's 10th via the higher valued field}
Let $(K,v)$ be  a finitely ramified henselian valued field of mixed characteristic $(0,p)$ having the initial ramification index $e$. Put $n= N_p(e)$. Then, $\Th_{\exists}^{\mathcal L_{val}}(K,v)$ is decidable if and only if $\Th_{\exists^+}^{\mathcal L_{ring}}(\mathcal O_{v,n})$ is decidable.
\end{theorem}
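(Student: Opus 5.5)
The plan mirrors the proof of Corollary~\ref{cor:transfer of decidability of full theories}, with Corollary~\ref{cor: relative existential} in place of Corollary~\ref{cor:relative complete}. Put $T:=T_{p,e}$, which is computably axiomatizable, and $n=N_p(e)$.

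\textbf{Left to right.} Suppose $\Th_{\exists}^{\mathcal L_{val}}(K,v)$ is decidable. Given a positive existential $\mathcal L_{ring}$-sentence $\sigma$ about $\mathcal O_n$, effectively rewrite it as an existential $\mathcal L_{val}$-sentence $\sigma^*$ by letting the quantifiers range over the sort $\mathcal O_n$ directly. Then $(K,v)\models\sigma^*$ iff $\mathcal O_{v,n}\models\sigma$, so membership of $\sigma$ in $\Th_{\exists^+}^{\mathcal L_{ring}}(\mathcal O_{v,n})$ is decided by testing $\sigma^*$.

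\textbf{Right to left.} Let $\Sigma:=\Th_{\exists^+}^{\mathcal L_{ring}}(\mathcal O_{v,n})$, viewed as a set of $\mathcal L_{val}$-sentences on the sort $\mathcal O_n$. Let $\Sigma^-$ be the set of negations of the positive existential sentences \emph{not} in $\Sigma$. If $\Sigma$ is decidable, then $T\cup\Sigma\cup\Sigma^-$ is computably axiomatized.

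The key claim is that for every existential $\mathcal L_{val}$-sentence $\phi$,
\[
(K,v)\models\phi \iff T\cup\Sigma\cup\Sigma^-\models\phi .
\]
To see this, note that any model $(L,w)$ of $T\cup\Sigma\cup\Sigma^-$ has $\Th_{\exists^+}(\mathcal O_{w,n})=\Sigma$, so $\mathcal O_{w,n}\equiv_{\exists^+}\mathcal O_{v,n}$. Corollary~\ref{cor: relative existential} then gives $(L,w)\equiv_\exists(K,v)$, so $\phi$ holds in $(K,v)$ iff it holds in every such model. Since $(K,v)$ is itself a model of this theory, the same argument applies to $\neg\phi$. Hence both $\Th_\exists(K,v)$ and the set of existential sentences false in $(K,v)$ are computably enumerable, and $\Th_\exists(K,v)$ is decidable.

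\textbf{Main obstacle.} The only delicate point is the left-to-right translation, specifically that the sort $\mathcal O_n$ in $\mathcal L_{val}$ carries exactly the ring structure, so the rewriting $\sigma\mapsto\sigma^*$ is purely syntactic. The multi-sorted setup guarantees this. The substantive content is entirely in Corollary~\ref{cor: relative existential}.
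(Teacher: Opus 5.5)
Your proof is correct and follows the paper's argument. The paper likewise sets $T_0:=T_{p,e}\cup T^{(K,v)}_{\exists}$ and $T_1:=T_{p,e}\cup T^{\mathcal O_{v,n}}_{\exists^+}$ (your $T\cup\Sigma\cup\Sigma^-$), shows via Theorem~\ref{thm:relative existential completeness} that they have the same models, and reads off the decidability equivalence. Your only deviation is the left-to-right direction, which you handle by directly translating positive existential $\mathcal L_{ring}$-sentences on $\mathcal O_n$ into existential $\mathcal L_{val}$-sentences; this is a harmless simplification of the paper's $T_0\models T_1$ step.
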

\begin{proof}
It is standard and similar to the proof of Corollary \ref{cor:transfer of decidability of full theories}. 
Let $T_{p,e}$ be the $\mathcal L_{val}$-theory of all finitely ramified henselian valued fields of mixed characteristic $(0,p)$ having the initial ramification index $e$, which is computably axiomatizable. 
Let $T_{\exists}^{(K,v)}$ be the set of all existential $\mathcal L_{val}$-sentences in $\Th^{\mathcal L_{val}}_{\exists}(K,v)$ and negations of all existential $\mathcal L_{val}$-sentences not in $\Th^{\mathcal L_{val}}_{\exists}(K,v)$.
Let $T_{\exists^+}^{\mathcal O_{v,n}}$ be the set of all positive existential $\mathcal L_{ring}$-sentences in $\Th^{\mathcal L_{ring}}_{\exists}(\mathcal O_{v,n})$ and negations of all positive existential $\mathcal L_{ring}$-sentences not in $\Th^{\mathcal L_{ring}}_{\exists}(\mathcal O_{v,n})$. 
Let $T_0:=T_{p,e}\cup T^{(K,v)}_{\exists}$ and $T_1:=T_{p,e}\cup T^{\mathcal O_{v,n}}_{\exists^+}$. 

We will show $T_0\models T_1$ and $T_1\models T_0$. 
Given a valued field $(L, w)$, if $(L,w)\models T_0$, then $(L,w)\equiv_{\exists}(K,v)$ and by Theorem \ref{thm:relative existential completeness}, $\mathcal O_{w,n}\equiv_{\exists^+}\mathcal O_{v,n}$. 
So, $(L,w)\models T_1$, which implies $T_0\models T_1$. 
Conversely, if $(L,w)\models T_1$, then $\mathcal O_{w,n}\equiv_{\exists^+}\mathcal O_{v,n}$ and by Theorem \ref{thm:relative existential completeness} again, $(L,w)\equiv_{\exists}(K,v)$. 
So, $(L,w)\models T_0$, which implies $T_1\models T_0$.

Since $T_0\models T_1$ and $T_1\models T_0$, we have that $\Th^{\mathcal L_{val}}_{\exists}(K,v)$ is decidable if and only if $\Th^{\mathcal L_{ring}}_{\exists^+}(\mathcal O_{v,n})$ is decidable.
\end{proof}

\subsection{Existential closedness}\label{subsect:existential closedness}
Using Lemma \ref{lem:embedding lemma over substr}, we have an AKE principle for relative existential closedness relative to residue rings of higher length.
\begin{theorem}\label{theorem:relative existential closedness}
Let $(K,v)\subseteq (L, w)$ be finitely ramified henselian valued fields of mixed characteristic $(0,p)$ having the same initial ramification index $e$. Let $n$ be a positive integer. If $\mathcal O_{v,n}\preceq_{\exists} \mathcal \mathcal O_{w,n}$ in $\mathcal L_{ring}$ and $vK\preceq_{\exists} wL$ in $\mathcal L_{oag}$, then $(K,v)\preceq_{\exists} (L,w)$ in $\mathcal L_{val}$.
\end{theorem}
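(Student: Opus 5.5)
The plan is the standard saturation argument: reduce $(K,v)\preceq_{\exists}(L,w)$ to building an $\mathcal L_{val}$-embedding of $L$ into an elementary extension of $K$ over $K$, then produce that embedding with Lemma~\ref{lem:embedding lemma over substr}, taking $K_0:=K$ and letting $L$ play the role of the field to be embedded.

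The reduction goes as follows. Let $(K^*,v^*)$ be a $|L|^+$-saturated elementary extension of $(K,v)$. It suffices to find an $\mathcal L_{val}$-embedding $L\to K^*$ over $K$. Indeed, any existential formula with parameters from $K$ that holds in $L$ then holds in $K^*$, hence in $K$.

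To produce the embedding, first replace $(L,w)$ by an $\aleph_1$-saturated elementary extension. Existential closedness of $K$ in this extension implies it in $L$, and the hypotheses on residue rings and value groups are preserved, since $\mathcal O_{w,n}$ and $wL$ only pass to elementary extensions. Similarly, arrange that $(K,v)$ is $\aleph_1$-saturated, which is harmless because existential closedness transfers down via $K\preceq K^*$. This needs care: I would pass to a suitable pair $(K',L')$ with $K'\succeq K$, $L'\succeq L$ and $K'\subseteq L'$, for instance a saturated elementary extension of the pair structure $(L,K)$, so that the hypotheses transfer. Next, apply Lemma~\ref{lem:embedding lemma over substr} with $(K_0,v_0):=(K,v)$, the field $(L,w)$ in the role of ``$K$'', and $(K^*,v^*)$ in the role of ``$L$''. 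This requires three inputs.
\begin{enumerate}
\item $Lw/Kv$ is separable. This is exactly Remark~\ref{rem:separability of residue field via higer residue ring}, using $\mathcal O_{v,n}\preceq_\exists\mathcal O_{w,n}$.
\item $wL/vK$ is torsion-free. If $m\gamma\in vK$ with $\gamma\in wL$, then $\exists y\,(my=m\gamma)$ has a witness in $vK$; torsion-freeness of the ordered group gives $y=\gamma$.
\item Embeddings $\mathcal O_{w,n}\to\mathcal O_{v^*,n}$ over $\mathcal O_{v,n}$ and $wL\to v^*K^*$ over $vK$. Since $\mathcal O_{v,n}\preceq_\exists\mathcal O_{w,n}$, the quantifier-free diagram of $\mathcal O_{w,n}$ over $\mathcal O_{v,n}$ is finitely satisfiable in $\mathcal O_{v,n}$, hence realized in $\mathcal O_{v^*,n}$ by $|L|^+$-saturation. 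The value group embedding is obtained the same way.
\end{enumerate}
The lemma then yields an $\mathcal L_{val}$-embedding $L\to K^*$ over $K$, which completes the argument.

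The main obstacle is the saturation bookkeeping. Lemma~\ref{lem:embedding lemma over substr} needs $K_0=K$ and the embedded field both $\aleph_1$-saturated, while we start with arbitrary $K\subseteq L$. I would handle this through the elementary extension of the pair $(L,K)$ described above, checking that $\preceq_\exists$ for the residue rings and the value groups persists there. This is routine, since these are first-order conditions on the pair, expressible by schemes saying each existential formula over the small structure satisfied in the large one is satisfied in the small one. After that, the saturated $K^*$ is taken over the new $K$.
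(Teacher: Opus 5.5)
Your proposal is correct and is essentially the paper's proof. The paper's ``taking ultrapowers properly'' is exactly your saturated elementary extension of the pair $(L,K)$, which makes $K\subseteq L$ both $\aleph_1$-saturated. From there the paper also takes an $|L|^+$-saturated $K^*\succeq K$, gets separability from Remark~\ref{rem:separability of residue field via higer residue ring}, gets torsion-freeness from $vK\preceq_\exists wL$, obtains the two embeddings by saturation, and applies Lemma~\ref{lem:embedding lemma over substr} with $K_0=K$ to embed $L$ into $K^*$ over $K$. Your explicit checks of torsion-freeness and of how the hypotheses and the conclusion transfer along the pair extension supply details the paper leaves implicit.
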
 
\begin{proof}
We mimic the proof of \cite[Theorem 5.14]{ADJ24}. 
Taking ultrapowers properly, we may assume that $(K,v)$ and $(L,w)$ are $\aleph_1$-saturated, and consider an $|L|^+$-saturated elementary extension $(K^*,v^*)$ of $(K,v)$.

Since $\mathcal O_{v,n}\preceq_{\exists} \mathcal O_{w,n}$ and $vK\preceq_{\exists} wL$, by Remark \ref{rem:separability of residue field via higer residue ring}, $Lw/Kv$ is separable.
 Moreover,  $w L/v K$ is torsion-free since  $vK\preceq_{\exists}wL$.
Also, since $\mathcal O_{v,n}\preceq_{\exists} \mathcal O_{w,n}$ and $(K^*,v^*)$ is $|L|^+$-saturated, there is an $\mathcal L_{ring}$-embedding $\mathcal O_{w,n}\rightarrow \mathcal O_{v^*,n}$ over $\mathcal O_{v,n}$. 
Similarly, since $vK\preceq_{\exists}wL$, there is an $\mathcal L_{oag}$-embedding $wL\rightarrow v^* K^*$ over $vK$. 
Therefore, by Lemma \ref{lem:embedding lemma over substr}, there is an embedding $(L,w)\rightarrow (K^*,v^*)$ over $(K,v)$, which implies that $(K,v)\preceq_{\exists} (L,w)$.
\end{proof}

\section{Induced structure on residue rings}\label{sec:induced}
In this section, we show that any $\mathcal L_{val}$-definable set on the sort $\mathcal O_k$ is exactly defined by a $\mathcal L_{ring}$-formula in a finitely ramified henselian valued field of mixed characteristic, which is analogous to \cite[Theorem 6.2]{ADJ24}.

\begin{theorem}\label{thm:formulas on the higher residue ring}
Let $(K,v)$ be a finitely ramified henselian valued field and let $n$ be a positive integer.
Let $x$ be a tuple of variables on the sort $\mathcal O_n$ and $y$ be a tuple of variables on the sort $\Gamma$.
Then, any $\mathcal L_{val}(K)$-formula $\theta(x,y)$ is equivalent in $K$ to a boolean combination of  $\mathcal L_{ring}(\mathcal O_{v,n})$-formulas in $x$ and   $\mathcal L_{oag}(vK)$-formulas in $y$. 
In particular, any $\mathcal L_{val}(K)$-definable subset of $\mathcal O_{v,n}$ is defined by a $\mathcal L_{ring}(\mathcal O_{v,n})$-formula.
\end{theorem}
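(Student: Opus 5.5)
We deduce the statement from Theorem~\ref{thm:relative completeness} by a type-counting and compactness argument, carried out over parameters. The parameters from $K$ are the main complication. First we reduce to the case of parameters from the residue ring and value group sorts only. To do so, we pass to an expansion $K_c$ of $(K,v)$ by constants naming $c\in K$. The key observation is that Theorem~\ref{thm:relative completeness} has a relative version over a substructure $(K_0,v_0)$ with $Kv/K_0v_0$ separable and $vK/v_0K_0$ torsion-free. That version is obtained by running the proof of Lemma~\ref{lem:embedding lemma over substr} instead of Lemma~\ref{lem:relative embedding absolute}, and it needs no lifting bound; this is why the residue ring level in the statement is the same $n$ as that of $x$. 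We would therefore work in a sufficiently saturated elementary extension $(K^*,v^*)$. There we take $K_0$ to be an elementary substructure containing the parameters of $\theta$, so that the separability and torsion-freeness hypotheses hold automatically for any elementary extension of $K_0$.

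Fix such a $K_0\preceq K^*$ containing the parameters. For tuples $(\alpha,\zeta)$ and $(\beta,\xi)$ in $\mathcal O_{v^*,n}\times v^*K^*$, we claim that $\tp^{\mathcal L_{val}}(\alpha,\zeta/K_0)=\tp^{\mathcal L_{val}}(\beta,\xi/K_0)$ holds as soon as two conditions hold. The first is $\tp^{\mathcal L_{ring}}(\alpha/\mathcal O_{v_0,n})=\tp^{\mathcal L_{ring}}(\beta/\mathcal O_{v_0,n})$ in $\mathcal O_{v^*,n}$. The second is $\tp^{\mathcal L_{oag}}(\zeta/v_0K_0)=\tp^{\mathcal L_{oag}}(\xi/v_0K_0)$.

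To prove the claim, use saturation to find a partial elementary map of $\mathcal O_{v^*,n}$ over $\mathcal O_{v_0,n}$ and one of $v^*K^*$ over $v_0K_0$ sending $\alpha\mapsto\beta$ and $\zeta\mapsto\xi$. Extend these to elementary self-embeddings of small elementary substructures $K_1\preceq K^*$ containing $K_0,\alpha,\zeta$; more precisely, we map $\mathcal O_{v_1,n}$ and $v_1K_1$ elementarily into $K^*$. Then apply Lemma~\ref{lem:embedding lemma over substr} over $K_0$ to get an $\mathcal L_{val}$-embedding $K_1\to K^*$ over $K_0$ inducing these maps. By Theorem~\ref{thm:relative model completeness}, applied to the image, that embedding is elementary, which gives the claim.

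Next apply the standard separation argument. The map sending a complete $\mathcal L_{val}(K_0)$-type of $(x,y)$ to its pair of restrictions is injective by the claim, and it is continuous between Stone spaces. Hence the set defined by $\theta$ corresponds to a clopen set in the product of the ring-type and oag-type spaces. By compactness, a clopen set in a product of Stone spaces is a finite union of basic boxes. Thus $\theta$ is equivalent to $\bigvee_i(\rho_i(x)\wedge\sigma_i(y))$, with $\rho_i$ an $\mathcal L_{ring}(\mathcal O_{v_0,n})$-formula and $\sigma_i$ an $\mathcal L_{oag}(v_0K_0)$-formula. This equivalence transfers back down to $K$, because it is a first-order statement with parameters in $K$. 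Taking $y$ empty gives the final sentence of the theorem.

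The main obstacle is verifying the hypotheses of Lemma~\ref{lem:embedding lemma over substr}: the $\aleph_1$-saturation of $K_0$ and $K_1$, and separability and torsion-freeness for $K_1$ over $K_0$. We would ensure these by choosing $K_0$ and $K_1$ as $\aleph_1$-saturated elementary substructures of a monster model. Then $K_0\preceq K_1$ gives $\mathcal O_{v_0,n}\preceq\mathcal O_{v_1,n}$. That yields separability via Remark~\ref{rem:separability of residue field via higer residue ring}, and it yields pure value groups and hence torsion-freeness.
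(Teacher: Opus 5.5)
Your proposal is correct and follows essentially the same route as the paper. You reduce by compactness to showing that equal $\mathcal L_{ring}$-types over $\mathcal O_{v_0,n}$ and equal $\mathcal L_{oag}$-types over $v_0K_0$ force equal $\mathcal L_{val}$-types over the base. You then obtain this by lifting the saturation-produced elementary maps of the residue ring and value group with Lemma~\ref{lem:embedding lemma over substr}, and upgrading the resulting embedding to an elementary one via Theorem~\ref{thm:relative model completeness}.

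Your choice of an $\aleph_1$-saturated $K_0\preceq K^*$ containing the parameters, with the equivalence transferred down to $K$ via the sentence $\exists c\,\exists d\,\forall x\,\forall y\,\bigl(\theta(x,y)\leftrightarrow\bigvee_i(\rho_i(x,c)\wedge\sigma_i(y,d))\bigr)$, just makes explicit what the paper compresses into ``we may assume $K$ is $\aleph_1$-saturated''. One small correction: in the Stone-space step, separate the disjoint compact images of $[\theta]$ and $[\neg\theta]$, rather than calling the image of $[\theta]$ clopen in the product.
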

\begin{proof}
We follow the proof strategy of \cite[Theorem~6.2]{ADJ24} using Theorem \ref{thm:relative model completeness} and Lemma \ref{lem:embedding lemma over substr}. 
By a standard compactness argument (cf. \cite[Corollary B.9.3]{ADH17}), it is enough to show that two complete $\mathcal L_{val}(K)$-types $q(x,y)$ and $r(x,y)$ are equal if their restrictions to a complete $\mathcal L_{ring}(\mathcal O_{v,n})$-type and a complete $\mathcal L_{oag}(vK)$-type are equal respectively.\\ 

Let $(K_1,v_1)$ and $(K_2,v_2)$ be elementary extensions of $(K,v)$, let $a_1,a_2$ be tuples in $\mathcal O_{v_1,n}$ and $\mathcal O_{v_2,n}$ respectively with $\tp_{\mathcal O_{v_1,n}}^{\mathcal L_{ring}}(a_1/\mathcal O_{v,n})=\tp_{\mathcal O_{v_2,n}}^{\mathcal L_{ring}}(a_2/\mathcal O_{v,n})$, and let $b_1,b_2$ be tuples in $v_1K_1$ and $v_2K_2$ respectively with $\tp_{v_1K_1}^{\mathcal L_{oag}}(b_1/vK)=\tp_{v_2K_2}^{\mathcal L_{oag}}(b_2/vK)$.
We will show that $\tp_{(K_1,v_1)}^{\mathcal L_{val}}(a_1,b_1/K)=\tp_{(K_2,v_2)}^{\mathcal L_{val}}(a_2,b_2/K)$.

By taking ultrapowers and elementary extensions properly, we may assume that $K$ and $K_1$ are $\aleph_1$-saturated and $K_2$ is $|K_1|^+$-saturated. 
Since $K_2$ is $|K_1|^+$-saturated and $\tp_{\mathcal O_{v_1,n}}^{\mathcal L_{ring}}(a_1/\mathcal O_{v,n})=\tp_{\mathcal O_{v_2,n}}^{\mathcal L_{ring}}(a_2/\mathcal O_{v,n})$, there is an elementary embedding $\varphi_{\mathcal O_n}:\mathcal O_{v_1,n}\rightarrow \mathcal O_{v_2,n}$ over $\mathcal O_{v,n}$ sending $a_1$ to $a_2$, and in a similar way, there is an elementary embedding $\varphi_{\Gamma}:v_1K_1\rightarrow v_2K_2$ over $vK$ sending $b_1$ to $b_2$.
By Lemma \ref{lem:embedding lemma over substr} with $K_0:=K$, $K:=K_1$, and $L:=K_2$, there is an embedding $\Phi:K_1\rightarrow K_2$ over $K$ which induces $\varphi_{\mathcal O_n}$ and $\varphi_{\Gamma}$. 
Also, by Theorem \ref{thm:relative model completeness}, the embedding $\Phi:K_1\rightarrow K_2$ is an elementary embedding over $K$. 
Thus, $\tp_{(K_1,v_1)}^{\mathcal L_{val}}(a_1,b_1/K)=\tp_{(K_2,v_2)}^{\mathcal L_{val}}(a_2,b_2/K)$.
\end{proof}

Next we consider the structure induced on the residue rings across all finitely ramified henselian valued fields of given ramification index, analogous to \cite[Proposition 6.5]{ADJ24}, which will turn out to be strongly related with special formulas. Using  Remark~\ref{rem:special formula under boolean combination},  
we rewrite Theorem \ref{thm:relative completeness} syntactically using (partial) types.

\begin{proposition}\label{prop:relative completeness in type}

Let $T_{p,e}$ be the $\mathcal L_{val}$-theory of all finitely ramified henselian valued fields of mixed characteristic $(0,p)$ having the initial ramification index $e$.

Let $k\ge 1 $ and $ n\ge \mr {max}\{k+e-1+ev_p(e), N_p(e) \}$. Let $q(x)$ be a complete $T_{p,e}$-type in a finite tuple $x$ of variables on the sort $\mathcal O_k$ and let $T_{\Gamma,q}$ be the set of all $\mathcal L_{oag}$-sentences in $q(x)$.
Then, the following hold modulo $T_{p,e}$:
\begin{enumerate}
	\item 
    For the set $\Sigma_q(x)$ of all special $\mathcal O_k$-formulas at level $n$ in the complete $T_{p,e}$-type $q(x)$, $$\Sigma_q(x)\cup T_{\Gamma,q}\models q(x).$$
	\item 
    For a complete $\mathcal L_{ring}$-type $q'(x')$ in a finite tuple $x'$ of variables on the sort $\mathcal O_n$, if $q(x)\models \exists x' \left(q'(x')\wedge \res^n_k(x')=x\right)$, then 
    $$\exists x' \left(q'(x')\wedge \res^n_k(x')=x\right)\cup T_{\Gamma,q}\models q(x).$$
\end{enumerate} 
 
\end{proposition}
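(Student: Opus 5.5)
Both parts reduce to Theorem~\ref{thm:relative completeness} via a model-theoretic argument: ``$\Sigma\models q$'' means every realization of $\Sigma$ in a model of $T_{p,e}$ realizes $q$. Throughout, $q$ is realized by some $\alpha$ in a model $(K,v)\models T_{p,e}$. The sentences in $T_{\Gamma,q}$ are exactly the $\mathcal L_{oag}$-sentences true in $vK$, so $T_{\Gamma,q}$ is a complete $\mathcal L_{oag}$-theory. Hence any $(L,w)\models T_{p,e}\cup T_{\Gamma,q}$ satisfies $vK\equiv wL$, which is condition (ii) of Theorem~\ref{thm:relative completeness} with empty tuples $\zeta,\xi$.

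For (i), let $(L,w)\models T_{p,e}\cup T_{\Gamma,q}$ and let $\beta$ in $\mathcal O_{w,k}$ realize $\Sigma_q(x)$. Take any special $\mathcal O_k$-formula $\theta_0$ at level $n$. Either $\theta_0\in q$, in which case $\theta_0\in\Sigma_q$ and $\beta$ satisfies it. Or $\neg\theta_0\in q$. By Remark~\ref{rem:special formula under boolean combination}, $\neg\theta_0$ is equivalent (uniformly, by a purely syntactic equivalence valid in every model) to a special formula $\theta_1$ at level $n$. Then $\theta_1\in q$, hence $\theta_1\in\Sigma_q$ (up to this equivalence, or after closing $\Sigma_q$ under it), so $\beta$ satisfies $\neg\theta_0$. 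Thus $\alpha$ and $\beta$ satisfy exactly the same special $\mathcal O_k$-formulas at level $n$. Together with $vK\equiv wL$, Theorem~\ref{thm:relative completeness} (ii)$\Rightarrow$(i) gives $\tp(\alpha)=\tp(\beta)$, i.e.\ $\beta\models q$.

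For (ii), suppose $(L,w)\models T_{p,e}\cup T_{\Gamma,q}$ and $\beta$ satisfies $\exists x'(q'(x')\wedge\res^n_k(x')=x)$. This is an infinitary conjunction, so we read it as the partial type asserting, for each finite conjunction $\chi$ of $q'$, the formula $\exists x'(\chi(x')\wedge \res^n_k(x')=x)$. We first pass to an $\aleph_1$-saturated elementary extension $(L',w')\succeq(L,w)$, where compactness yields a single $\beta'\in\mathcal O_{w',n}$ realizing $q'$ with $\res^n_k(\beta')=\beta$. Similarly, since $q(x)\models \exists x'(q'(x')\wedge\res^n_k(x')=x)$, there is $\alpha'$ realizing $q'$ with $\res^n_k(\alpha')=\alpha$ in a saturated extension $(K',v')$ of $(K,v)$. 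Now $\tp^{\mathcal L_{ring}}(\alpha')=q'=\tp^{\mathcal L_{ring}}(\beta')$, and the value groups are elementarily equivalent. Condition (iii) of Theorem~\ref{thm:relative completeness} therefore holds, and (iii)$\Rightarrow$(i) yields $\beta\models q$.

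The only delicate point is the bookkeeping in (i): one must ensure $\Sigma_q$ decides every special formula, which is exactly where closure of special formulas under negation (Remark~\ref{rem:special formula under boolean combination}) is used. One must also interpret the infinitary existential in (ii) correctly via saturation. Both are routine; the real content lies in Theorem~\ref{thm:relative completeness}.
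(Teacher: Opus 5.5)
Your proposal is correct and takes essentially the paper's route. The paper presents this proposition, without a separate proof, as a syntactic restatement of Theorem~\ref{thm:relative completeness} via Remark~\ref{rem:special formula under boolean combination}: closure under negation gives (i) through (ii)$\Rightarrow$(i), and realizing $q'$ in saturated extensions gives (ii) through (iii)$\Rightarrow$(i).

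Your parenthetical hedge in (i) is unnecessary. Since $q$ is a complete $T_{p,e}$-type, it contains the special formula $\theta_1$ literally, so $\theta_1\in\Sigma_q$ directly.
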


\begin{proposition}\label{prop:equivlance between formula and special formula}
Let $T_{p,e}$ be the $\mathcal L_{val}$-theory of all finitely ramified henselian valued fields of mixed characteristic $(0,p)$ having the initial ramification index $e $. Let $k\ge 1 $  and 
$ n\ge \mr {max}\{k+e-1+ev_p(e), N_p(e) \}$.
Let $x$ be a tuple of variables on the sort $\mathcal O_k$. Then, any $\mathcal L_{val}$-formula $\theta(x)$ is equivalent modulo $T_{p,e}$ to a formula of the form 
$$\bigvee\left( \theta_{0}(x)\wedge \theta_{1}\right),$$
where $\theta_0(x)$ is a special $\mathcal O_k$-formula at level $n$ and $\theta_1$ is a $\mathcal L_{oag}$-sentence. 

Moreover, if $\theta(x)$ is existential, then it is equivalent modulo $T_{p,e}$ to an $\exists^+$-special $\mc O_k$-formula at level $n$.
\end{proposition}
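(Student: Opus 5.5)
The plan is a standard compactness argument over complete types, using Proposition~\ref{prop:relative completeness in type} for the full statement and Theorem~\ref{thm:relative existential completeness} for the existential one.

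\textbf{The full statement.} Fix $\theta(x)$ and let $S$ be the set of complete $T_{p,e}$-types $q(x)$ in the sort $\mathcal O_k$ containing $\theta$. Take $q\in S$. By Proposition~\ref{prop:relative completeness in type}(i) we have $\Sigma_q(x)\cup T_{\Gamma,q}\models q(x)\models\theta(x)$ modulo $T_{p,e}$. Compactness then gives finitely many formulas in $\Sigma_q$ and finitely many sentences in $T_{\Gamma,q}$ whose conjunction implies $\theta$. By Remark~\ref{rem:special formula under boolean combination}, the finite conjunction of special formulas is again a special $\mathcal O_k$-formula $\theta_0^q$ at level $n$. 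The finite conjunction of $\mathcal L_{oag}$-sentences is a single sentence $\theta_1^q$. Moreover $\theta_0^q\wedge\theta_1^q\in q$.

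The formulas $\theta_0^q\wedge\theta_1^q$, for $q\in S$, cover the closed set $[\theta]$ in the type space, and each of them implies $\theta$. By compactness of the type space, finitely many already cover $[\theta]$. The disjunction of these finitely many formulas is therefore equivalent to $\theta$ modulo $T_{p,e}$.

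\textbf{The existential statement.} Let $\theta(x)$ be existential. For $q$ containing $\theta$, let $\Pi_q$ be the set of $\exists^+$-special $\mathcal O_k$-formulas at level $n$ lying in $q$.

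I claim that $\Pi_q\models\theta$ modulo $T_{p,e}$. Suppose $(L,w)\models T_{p,e}$ and $\beta$ realizes $\Pi_q$. Realize $q$ by some $\alpha$ in a model $(K,v)$. Then every $\exists^+$-special formula true of $\alpha$ in $K$ is true of $\beta$ in $L$. This is condition (ii) of Theorem~\ref{thm:relative existential completeness}, which gives condition (i): the existential type of $\alpha$ is contained in that of $\beta$. In particular $L\models\theta(\beta)$.

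By compactness, a finite conjunction of formulas from $\Pi_q$ implies $\theta$. By Remark~\ref{rem:special formula under boolean combination}, this conjunction is a single $\exists^+$-special formula, and it lies in $q$. Covering $[\theta]$ by finitely many such formulas, as in the full case, expresses $\theta$ as a finite disjunction of $\exists^+$-special formulas. Remark~\ref{rem:special formula under boolean combination} then collapses the disjunction into one $\exists^+$-special formula.

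Since each $\exists^+$-special formula is existential in $\mathcal L_{val}$, no $\mathcal L_{oag}$-sentence is needed in this case. This is consistent with the absence of a value group condition in Theorem~\ref{thm:relative existential completeness}.

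\textbf{Main obstacle.} I expect the only delicate point to be the claim $\Pi_q\models\theta$. It requires checking that Theorem~\ref{thm:relative existential completeness} applies to an arbitrary pair of models and tuples, which holds since $n\ge\lambda_p(e,k)$. It also requires noting that the theorem needs no condition on the value group. The remaining steps are routine.
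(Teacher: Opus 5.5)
Your proposal is correct and follows essentially the paper's route: compactness over complete $T_{p,e}$-types with Proposition~\ref{prop:relative completeness in type} gives the full normal form, and Theorem~\ref{thm:relative existential completeness}, (ii)$\Rightarrow$(i), together with Remark~\ref{rem:special formula under boolean combination} gives the existential one (the paper cites part (ii) of Proposition~\ref{prop:relative completeness in type} where you use part (i), which is an inessential difference). One small point: justify applying Theorem~\ref{thm:relative existential completeness} by the hypothesis $n\ge\max\{k+e-1+ev_p(e),N_p(e)\}$ itself rather than by ``$n\ge\lambda_p(e,k)$'', because in Section~\ref{sec:induced} $\lambda_p(e,k)$ is defined as the optimal level, and its identification with this maximum (Theorem~\ref{thm:exact-normal-form-depth}) is proved using the present proposition.
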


\begin{proof}
We mimic the proof of \cite[Proposition 6.5]{ADJ24} with Proposition \ref{prop:relative completeness in type} (ii). By Proposition \ref{prop:relative completeness in type} (ii), each complete $\mathcal L_{val}$-$T_{p,e}$-type $q(x)$ is equivalent modulo $T_{p,e}$ to $\exists x' \left(q'(x')\wedge \res^n_k(x')=x\right)\cup T_{\Gamma,q}$ for a complete $\mathcal L_{ring}$-$T_{p,e}$-type $q'(x')$ in a tuple $x'$ of variables on the sort $\mathcal O_n$. 
So, by \cite[Corollary B.9.3]{ADH17}, $\theta(x)$  is equivalent modulo $T_{p,e}$ to a formula of the form 
$$\bigvee\left( \theta_0(x)\wedge \theta_1\right),$$ 
where $\theta_0(x)$ is a special $\mathcal O_k$-formula and $\theta_1$ is a $\mathcal L_{oag}$-sentence.
 
The existential assertion follows from Theorem~\ref{thm:relative existential completeness} and Remark~\ref{rem:special formula under boolean combination} by compactness, as in \cite[Corollary~5.9]{ADJ24}.
\end{proof}

\begin{definition} \label{def:existential-normal-form-depth}
For a prime $p$ and integers $e,k\ge1$, let $\lambda_{p}(e, k)$ be the least level at which every  $\mathcal L_{\mathrm{val}}$-formula in a tuple of variables on the sort $\mc O_k$ has a  normal form as in  \emph{Proposition~\ref{prop:equivlance between formula and special formula}}, uniformly over $T_{p,e}$.
\end{definition}

Note that Basarab’s invariant $\lambda(T)$ is the least length of a residue-ring that, together with the value-group theory, determines a fixed complete theory $T$ (cf. \cite[Definition~5.4]{LL21}).

\begin{theorem}  \label{thm:exact-normal-form-depth}
For every prime $p$ and integers $e,k\ge1$,
$$
 \lambda_{p}(e, k) =\max\{N_p(e),\,k+e-1+ev_p(e)\}.
$$ 
\end{theorem}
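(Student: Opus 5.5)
The upper bound $\lambda_p(e,k)\le\max\{N_p(e),k+e-1+ev_p(e)\}$ is exactly Proposition~\ref{prop:equivlance between formula and special formula}. I also note that the normal form at level $n$ implies the normal form at every level $n'\ge n$: a special $\mathcal O_k$-formula at level $n$ is a special formula at level $n'$, obtained by replacing $\theta'(x')$ with $\theta'(\res^{n'}_n(x''))$. The relation $\res^{n'}_n$ is $\mathcal L_{ring}$-definable on $\mathcal O_{n'}$ by Remark~\ref{rem:kernel_speical formula}. So $\lambda_p(e,k)$ is the least level from which the normal form holds, and it suffices to show that the normal form fails at every level $n$ with $k\le n<\max\{N_p(e),k+e-1+ev_p(e)\}$. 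I treat the two terms of the maximum separately.

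\emph{First term, $n<N_p(e)$.} Suppose every formula had the normal form at level $n$. Apply this to $\mathcal L_{val}$-sentences, regarded as formulas in $x$ (for instance $\theta\wedge x=x$). Then any two models with $\mathcal O_{v,n}\equiv\mathcal O_{w,n}$ and $vK\equiv wL$ would be elementarily equivalent. To see this, note that a special formula $\exists x'(\theta'(x')\wedge\res^n_k(x')=x)$, quantified as $\exists x$, becomes the pure ring sentence $\exists x'\theta'(x')$ about $\mathcal O_{v,n}$. Now take the pairs from Propositions~\ref{prop:tame-pair} and \ref{prop: wild nondivisible case}. These have value group $\mathbb Z$ and isomorphic residue rings of length $N_p(e)-1\ge n$, hence isomorphic residue rings of length $n$. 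But they are distinguished by the $\mathcal L_{val}$-sentence ``$X^e+p$ has a root''. This contradicts the consequence above; it is the minimality argument of Corollary~\ref{cor:relative complete}, extended to all levels below $N_p(e)$.

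\emph{Second term, $N_p(e)\le n<k+e-1+ev_p(e)$.} This is the main obstacle, since it requires showing that the error bound in Theorem~\ref{thm: approximate lifting} is sharp. I need two complete DVRs $K,L$ and tuples $\alpha\in\mathcal O_{v,k}$, $\beta\in\mathcal O_{w,k}$ with the following properties. First, $K\cong L$, or at least $\mathcal O_{v,n}\equiv\mathcal O_{w,n}$ with equal value groups, so that the $\mathcal L_{oag}$ parts agree. Second, there is a ring isomorphism $\mathcal O_{v,n}\to\mathcal O_{w,n}$ whose reduction carries $\alpha$ to $\beta$, which forces $\alpha,\beta$ to satisfy the same special formulas at level $n$. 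Third, $\alpha$ and $\beta$ have different $\mathcal L_{val}$-types. The simplest choice is $K=L$, $k$ arbitrary, and $\alpha=\res_k(\pi)$ for a root $\pi$ of the Eisenstein polynomial $X^e+p$ over $\mathbb Q_p$ when $p\nmid e$, or the polynomial $g$ of Proposition~\ref{prop: wild nondivisible case} when $p\mid e$. The distinguishing formula is ``$x$ is the residue of a root of $X^e+p$'', i.e.\ $\exists y\,(y^e+p=0\wedge\res_k(y)=x)$. I then need a ring automorphism of $\mathcal O_{v,n}$ sending $\res_n(\pi)$ to $\res_n(\pi')$ for some $\pi'$ whose residue mod $\mathfrak m^k$ is not the residue of any root. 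The natural candidate is $\pi'=\pi+c\pi^{j}$ with $j$ chosen so that $f(\pi')\in\mathfrak m^n$, i.e.\ $v(f(\pi'))=\Delta+j\ge n$. By Lemma~\ref{lem:modified Krasner}(ii), $\pi'$ is then at distance exactly $j=n-\Delta$ from the nearest root, where $\Delta=e-1+ev_p(e)$ for $f=X^e+p$. Since $k>n-\Delta$, this nearest root $\rho$ differs from $\pi'$ modulo $\mathfrak m^k$. One must still check that no other root is congruent to $\pi'$ mod $\mathfrak m^k$; this holds because the other roots are farther away. One must also check that $\alpha'=\res_k(\pi')$ is not the residue of any root lying in a different embedding. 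For that I would work with the base $\mathbb Q_p$ (residue field $\mathbb F_p$, so the automorphism $\pi\mapsto\pi'$ mod $\mathfrak m^n$ is well defined because $f(\pi')\equiv0$) and use the root-distance bounds of Lemma~\ref{lemma:different and krasner bound}. The delicate points are choosing $c$ so that $\pi'$ avoids all roots mod $\mathfrak m^k$, which needs $n-\Delta>\kappa$; this is guaranteed by $n\ge N_p(e)$. For $p\mid e$ one must also confirm that the equality $\Delta=e-1+ev_p(e)$ is attained, which holds for $X^e+p$ since the minimum is taken at $i=e$.
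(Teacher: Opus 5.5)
Your argument is essentially the paper's. The upper bound comes from Proposition~\ref{prop:equivlance between formula and special formula}. For levels below $N_p(e)$ you use the pairs of Propositions~\ref{prop:tame-pair} and \ref{prop: wild nondivisible case}; you argue via the sentence ``$X^e+p$ has a root'', where the paper uses the formula $\exists y\,(y^e+p=0\wedge\res_k(y)=x)$. For the remaining levels you use the automorphism $\pi\mapsto\pi+\pi^{\,n-\Delta}$ of $\mathcal O_{v,n}$ for $K=\mathbb Q_p(\sqrt[e]{-p})$, with $\Delta=\Delta_v(X^e+p)$. The paper builds this automorphism only at $n=k+\Delta-1$ and lets the induced automorphisms handle all lower levels.

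One slip needs removing: for $p\mid e$ you must not switch to a root of $g$. By Proposition~\ref{prop: wild nondivisible case}, $X^e+p$ has no root in $\mathbb Q_p(\pi_2)$, so your distinguishing formula would define the empty set there. Use $X^e+p$ in both cases, as the rest of your computation already does. Also, the worry about roots ``in a different embedding'' is vacuous, since every root of $X^e+p$ in $K$ is some $\pi\zeta$ already covered by your distance estimates.
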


\begin{proof}
We simply write $C_k=k+e-1+ev_p(e)$.
By   Proposition~\ref{prop:equivlance between formula and special formula}, we obtain $
 \lambda_{p}(e, k) \le \max\{N_p(e),\,k+e-1+ev_p(e)\}.
$
For $e=1$, we have  $\lambda_p(e,k)= k$ since $n\ge k$. 

Now we assume $e>1$.
Consider  the following  formula
$$
\varphi_k(x):=\exists y\left(y^e +p= 0 \wedge\mr{res}_k(y)=x\right),
$$
where $y$ is a variable on the field sort.

First we  prove  $ \lambda_{p}(e, k)\ge N_p(e)$. 
 Propositions~\ref{prop:tame-pair} and  \ref{prop: wild nondivisible case}   give complete $\mathbb Z$-valued fields $K_1$ and $K_2$  such that  their residue rings   of   length  $<N_p(e)$ are isomorphic and $X^e + p$ has a root in $K_1$ only. 
Thus $\varphi_k$ defines a nonempty set in $K_1$ and the empty set in $K_2$. 
      If $k\le  \lambda_{p}(e, k) < N_p(e)$, we obtain a contradiction because an isomorphism of the residue rings of length $\lambda_p(e,k)$ preserves the projections of  definable sets.

Next we  prove  $ \lambda_{p}(e, k)\ge C_k$. Suppose $C_k >N_p(e)$. Let $K=\mathbb Q_p(\pi)$ with $\pi^e= -p$ and $v(\pi)=1$.
Put $f(X):=X^e +p$.
Since the roots of $f$ are $\pi\zeta$ with $\zeta^e=1$, an elementary calculation shows
$$
\Delta_v(f)=e-1+ev_p(e),\ \  \ \kappa_v(f)= 
\begin{cases}
1,&\hbox{if } \ p\nmid e,\\
1+e/(p-1),& \hbox{if } \ p\mid e.
\end{cases}
$$
Consequently, by Theorem~\ref{thm: lifting number}, we have
$
N_p(e)=\Delta_v(f)+\lfloor\kappa_v(f)\rfloor+1.
$ 
Then $k-1>\kappa_v(f)$. 
Set
$$
n=k+\Delta_v(f)-1\ \hbox{ and } \ \beta=\pi+\pi^{k-1}.
$$
For all roots  $\alpha$ $(\ne\pi)$ of $f$, we have $v(\beta-\alpha)=v(\pi-\alpha)<k-1$ by ultrametric inequality. 
Therefore we get
$$
v(f(\beta)) =k-1+\sum_{\alpha\ne\pi}v(\pi-\alpha) =k-1+\Delta_v(f)=n.
$$
In particular,   $\pi\mapsto\beta$ induces an automorphism  $\eta_n:\mc O_{v,n} \xrightarrow{\cong} \mc O_{v,n} $.

The   automorphism of $\mathcal O_{v,k}$, induced by $\eta_n$, sends $\mr{res}_k(\pi)$   to $\mr{res}_k(\beta)$.
Note that  $\mr{res}_k(\pi)$ satisfies $\varphi_k$, but $\mr{res}_k(\beta)$ does not since $v(\beta-\alpha)<k$ for every root $\alpha$ of $f$.
Every normal form as in Proposition~\ref{prop:equivlance between formula and special formula} at a level $m$ with $k\le m\le n$ is preserved by this automorphism, and hence $\varphi_k$ does not admit such a normal form at level less than $n+1$. 
It follows that $\lambda_p(e,k)\ge n+1=C_k$ and we get the desired formula 
$\lambda_p(e,k) =\mr{max} \{N_p(e), C_k \}$.
\end{proof}

Both normal forms modulo $T_{p,e}$ in Proposition~\ref{prop:equivlance between formula and special formula} are effectively computable at level $\lambda_p(e,k)$, uniformly in $p,e,k$ and the input formula.
Indeed, this level is computable, and the uniform computable axiomatizability of $T_{p,e}$ allows proof enumeration to find an equivalence of the required form.

\begin{remark}\label{rem:meaning of induced structure}
In \cite[Remark 6.3]{ADJ24}, it is known that the residue field sort and value group sort are stably embedded and orthogonal.
By Theorem \ref{thm:formulas on the higher residue ring}, we show that the multi-sorted structure of residue rings of all finite lengths and value group sort are also stably embedded and orthogonal. 

And, by \cite[Example 3.5]{LL21}, in the residue ring of higher length, there is  an $\mathcal L_{val}(\emptyset)$-definable but not $\mathcal L_{ring}(\emptyset)$-definable set.
\end{remark}

\subsection*{Acknowledgements}
The authors were supported by the National Research Foundation of Korea (NRF) grants funded by the Korea government (MSIT) (RS-2024-00451678). The second author was supported by the National Research Foundation of Korea (NRF) grants funded by the Korea government (MSIT) (RS-2021-NR061985) and by Basic Science Research Program through the National Research Foundation of Korea (NRF) funded by the Ministry of Education (RS-2025-25435653).
 

\subsection*{Statement on AI use}
The proof of the case $p-1 \mid e$   of Proposition~\ref{prop: wild nondivisible case} was obtained through conversations with ChatGPT (GPT-5.6 Sol, OpenAI).
In other parts of the paper, ChatGPT was used only for auxiliary tasks such as proofreading and reference checking.
The authors take full responsibility for all proofs in this paper.

\normalsize

\bibliographystyle{plain}
\bibliography{references}

\end{document}